\newcommand{\sect}[1]{\section{#1}\setcounter{equation}{0}}
\newcommand{\subsect}[1]{\subsection{#1}}
\font\mbn=msbm10 scaled \magstep1
\font\mbs=msbm7 scaled \magstep1
\font\mbss=msbm5 scaled \magstep1
\def\mbf{\fam\mbff}
\def\Z{{\mbf Z}}
\def\Co{{\mbf C}}
\def\Di{{\mbf D}}
\def\N{{\mbf N}}
\newtheorem{Th}{Theorem}[section]
\newtheorem{Lm}[Th]{Lemma}
\newtheorem{C}[Th]{Corollary}
\newtheorem{Proposition}[Th]{Proposition}
\newtheorem{R}[Th]{Remark}
\newtheorem*{Problem}{Problem}
\newtheorem*{Problem 1}{Problem 1}
\begin{document}

\title[Holomorphic Banach Vector Bundles]{Holomorphic Banach Vector Bundles on the Maximal Ideal Space of H$^\infty$ and the Operator Corona Problem of Sz.-Nagy}

\author{Alexander Brudnyi} 
\address{Department of Mathematics and Statistics\newline
\hspace*{1em} University of Calgary\newline
\hspace*{1em} Calgary, Alberta\newline
\hspace*{1em} T2N 1N4}
\email{albru@math.ucalgary.ca}
\keywords{Holomorphic Banach vector bundle, maximal ideal space, operator corona problem, $\bar\partial$-equation}
\subjclass[2000]{Primary 30D55. Secondary 30H05}

\thanks{Research supported in part by NSERC}

\begin{abstract}
We establish triviality of some holomorphic Banach vector bundles on the maximal ideal space $M(H^\infty)$ of the Banach algebra $H^\infty$ of bounded holomorphic functions on the unit disk $\Di\subset\Co$ with pointwise multiplication and supremum norm. We apply the result to the study of the Sz.-Nagy operator corona problem.
\end{abstract}

\date{}



\maketitle

\sect{Formulation of Main Results}
\subsect{}
We continue our study started in \cite{Br2} of analytic objects on the maximal ideal space $M(H^\infty)$ of the Banach algebra $H^\infty$ of bounded holomorphic functions on the unit disk $\Di\subset\Co$ with pointwise multiplication and supremum norm.
The present paper deals with holomorphic Banach vector bundles defined on $M(H^\infty)$ and the operator corona problem posed by Sz.-Nagy.  Recall that for a commutative unital complex Banach algebra $A$ with dual space $A^*$ the {\em maximal ideal space} $M(A)$ of $A$ is the set of nonzero homomorphisms $A\to\Co$ equipped with the {\em Gelfand topology}, the weak$^*$ topology induced by $A^*$. It is a compact Hausdorff space contained in the unit ball of $A^*$. In the case of $H^\infty$ evaluation at a point of $\Di$ is an element of $M(H^\infty)$, so $\Di$ is naturally embedded into $M(H^\infty)$ as an open subset. The famous Carleson corona theorem \cite{C} asserts that $\Di$ is dense in $M(H^\infty)$.

Let $U\subset M(H^\infty)$ be an open subset and $X$ be a complex Banach space. 
A continuous function $f\in C(U;X)$ is said to be $X$-valued {\em holomorphic} if its restriction to $U\cap\Di$ is $X$-valued holomorphic in the usual sense. 

By $\mathcal O(U;X)$ we denote the vector space of $X$-valued holomorphic functions on $U$. 

Let $E$ be a continuous Banach vector bundle on $M(H^\infty)$ with fibre $X$ defined on an open cover $\mathcal U=(U_i)_{i\in I}$ of $M(H^\infty)$ by a cocycle $\{g_{ij}\in C(U_i\cap U_j; GL(X))\}$; here $GL(X)$ is the group of invertible elements of the Banach algebra $L(X)$ of bounded linear operators on $X$ equipped with the operator norm. We say that $E$ is holomorphic if all $g_{ij}\in  \mathcal O(U_i\cap U_j; GL(X))$.  In this case $E|_\Di$ is a holomorphic Banach vector bundle on $\Di$ in the usual sense.
Recall that $E$ is defined as the quotient space of the disjoint union $\sqcup_{i\in I}U_i\times X$ by the equivalence relation:
\[
U_j\times X\ni u\times x\sim u\times g_{ij}(u)x\in U_i\times X.
\]
The projection $p:E\to X$ is induced by natural projections $U_i\times X\to U_i$, $i\in I$.

A morphism $\varphi : (E_1, X_1, p_1)\to (E_2, X_2, p_2)$ of holomorphic Banach vector bundles on $M(H^\infty)$ is a continuous map which sends each vector space $p_1^{-1}(w)\cong X_1$ linearly to vector space $p_2^{-1}(w)\cong X_2$, $w\in M(H^\infty)$, and such that $\varphi|_\Di: E_1|_\Di\to E_2|_\Di$ is a holomorphic map of complex Banach manifolds. If, in addition, $\varphi$ is bijective, then $\varphi$ is called an isomorphism. 

We say that a holomorphic Banach vector bundle $(E,X,p)$ on $M(H^\infty)$ is {\em holomorphically trivial} if it is isomorphic to the trivial bundle
$M(H^\infty)\times X$. (For the basic facts of the theory of bundles, see, e.g., \cite{Hus}.)

Let $GL_0(X)$ be the connected component of $GL(X)$ containing the identity map $I_X:={\rm id}_X:X\to X$. Then $GL_0(X)$ is a clopen normal subgroup of $GL(X)$. By $q:GL(X)\to GL(X)/GL_0(X):=C(GL(X))$ we denote the continuous quotient homomorphism onto the discrete group of connected components of $GL(X)$. Let $E\to M(H^\infty)$ be a holomorphic Banach vector bundle with fibre $X$ defined on a finite open cover $\mathcal U=(U_i)_{i\in I}$ of $M(H^\infty)$ by a cocycle $g=\{g_{ij}\in \mathcal O (U_i\cap U_j; GL(X))\}$. By $E_{C(GL(X))}$ we denote the principal bundle on $M(H^\infty)$ with fibre $C(GL(X))$ defined on $\mathcal U$ by the locally constant cocycle $q(g)=\{q(g_{ij})\in C(U_i\cap U_j; C(GL(X)))\}$.

\begin{Th}\label{bundle}
$E$ is holomorphically trivial if and only if the associated bundle $E_{C(GL(X))}$ is trivial in the category of principal bundles with discrete fibres. 
\end{Th}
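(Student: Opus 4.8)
The proof splits into the two implications, the second of which is the substantial one. For the ``only if'' direction, if $E$ is holomorphically trivial then its defining cocycle is a holomorphic coboundary, $g_{ij}=h_ih_j^{-1}$ with $h_i\in\mathcal O(U_i;GL(X))$, and applying the homomorphism $q$ gives $q(g_{ij})=q(h_i)\,q(h_j)^{-1}$. Since $q\circ h_i\colon U_i\to C(GL(X))$ is continuous with values in a discrete space it is locally constant, so $q(g)=\{q(g_{ij})\}$ is a coboundary of locally constant $C(GL(X))$-valued functions; hence $E_{C(GL(X))}$ is trivial as a principal bundle with discrete fibre.

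For the converse, suppose $E_{C(GL(X))}$ is trivial, say $q(g_{ij})=d_id_j^{-1}$ with $d_i\colon U_i\to C(GL(X))$ locally constant. Each level set $d_i^{-1}(c)$ is open and closed in $U_i$, so, using surjectivity of $q$, one may fix on each of them a preimage under $q$ and thereby obtain a locally constant lift $\tilde d_i\colon U_i\to GL(X)$ with $q\circ\tilde d_i=d_i$. A locally constant map restricts to a holomorphic map on $U_i\cap\Di$, so $\{\tilde d_i^{-1}\}$ is a holomorphic coboundary and $g'_{ij}:=\tilde d_i^{-1}g_{ij}\tilde d_j$ is a holomorphic cocycle defining a holomorphic bundle $E'$ isomorphic to $E$; by construction $q(g'_{ij})=d_i^{-1}(d_id_j^{-1})d_j=e$, i.e.\ $g'_{ij}$ has values in $GL_0(X)$. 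Hence the theorem reduces to the assertion that \emph{any holomorphic Banach vector bundle on $M(H^\infty)$ whose defining cocycle takes values in the connected group $GL_0(X)$ is holomorphically trivial}.

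This reduced statement is where the fine structure of $M(H^\infty)$ enters, and I would prove it by the Banach-space version of the Grauert--Oka scheme (following Bungart), using the $\bar\partial$/Cousin machinery over $M(H^\infty)$ developed in \cite{Br2}. One first reduces, after passing to a sufficiently fine finite refinement of $\mathcal U$, to the case in which $g'_{ij}$ is uniformly close to $I_X$; this step is equivalent to topological triviality of the bundle, which for connected structure group must here be extracted from the topology of $M(H^\infty)$ and is, I expect, the subtlest point. Once $g'_{ij}$ is close to $I_X$ one writes $g'_{ij}=\exp a_{ij}$ with small holomorphic $a_{ij}\colon U_i\cap U_j\to L(X)$ and solves the resulting Cousin-type problem --- nonlinear, since $L(X)$ is noncommutative --- by a rapidly convergent iteration, splitting at each stage an additive cocycle $a_{ij}=b_i-b_j$ with $b_i$ holomorphic on $U_i$. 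The crucial ingredient is the solvability of the additive Cousin problem (equivalently of a suitable $\bar\partial$-equation) on $M(H^\infty)$ \emph{with uniform bounds}: this forces each correcting $b_i$ to be bounded on $U_i\cap\Di$, hence to extend holomorphically to $U_i$, whereas the trivialization that exists automatically on the planar domain $\Di$ need not extend across the corona without such control. Thus the main obstacles are the estimate-carrying $\bar\partial$/Cousin step and the verification of topological triviality; the remainder is the standard bookkeeping of the Cartan--Grauert iteration.
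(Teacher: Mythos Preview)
Your reduction step is exactly what the paper does: the ``only if'' direction is immediate, and for ``if'' you lift the locally constant $d_i$ to locally constant $\tilde d_i\in GL(X)$ and conjugate to obtain a cocycle with values in $GL_0(X)$. (The paper passes to a refinement so that each $d_i$ extends continuously to $\bar U_i$, forcing the image to be finite; this guarantees the lift exists without any choice issues.) So far the arguments coincide.

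Where you diverge is in the treatment of the reduced $GL_0(X)$-bundle. You propose a Bungart--Grauert iteration: refine until $g'_{ij}$ is close to $I_X$, then run the nonlinear Cousin scheme with bounded $\bar\partial$-solutions from \cite{Br2}. You correctly flag the first step---getting the cocycle into the identity component of the relevant function space---as the real obstacle, and you leave it open. This is a genuine gap: connectedness of $GL_0(X)$ does \emph{not} by itself imply that a $GL_0(X)$-valued cocycle on $M(H^\infty)$ is path-connected to $I$ in $A(K;GL_0(X))$; one needs topological input specific to $M(H^\infty)$.

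The paper fills this gap by a different route. Rather than attacking topological triviality for $GL_0(X)$ directly, it \emph{lifts to the universal cover} $\widetilde{GL_0(X)}$: the obstruction to lifting the cocycle lives in $H^2(M(H^\infty),Z')$ for a finitely generated abelian $Z'$, and this group vanishes by Su\'arez's results (${\rm dim}\,M(H^\infty)=2$ and $H^2(M(H^\infty),\Z)=0$). Once the fibre is simply connected, the topological step becomes tractable: the paper covers $M(H^\infty)$ so that all nontrivial intersections lie in open Blaschke sets, and shows (via an inverse-limit argument reducing to one-complexes) that any continuous map from a compact subset of a Blaschke set into a simply connected Banach Lie group is null-homotopic. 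This, combined with Davie's theorem identifying components of $(A(K)\widehat\otimes B)^{-1}$ with homotopy classes $[K,B^{-1}]$, places each transition function in the identity component of $A(K;\widetilde{GL_0(X)})$, whereupon a Cartan-type lemma (proved via your Cousin/$\bar\partial$ machinery and Runge approximation) allows an inductive sewing of local trivializations.

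In short: your outline is sound through the reduction, and your instinct about the iteration is right, but the passage to the simply connected cover---together with the vanishing of $H^2$ and the null-homotopy lemma for Blaschke sets---is the missing idea that makes the ``subtlest point'' go through.
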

\begin{C}\label{cor1.2}
$E$ is holomorphically trivial in one of the following cases:
\begin{itemize}
\item[(1)]
The image of each function $g_{ij}$ in the definition of $E$ belongs to $GL_0(X)$ (e.g., this is true if $GL(X)$ is connected);
\item[(2)]
$E$ is trivial in the category of continuous Banach vector bundles.
\end{itemize}
\end{C}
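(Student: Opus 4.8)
The plan is to deduce both cases directly from Theorem~\ref{bundle}: it suffices to verify that under either hypothesis the associated discrete-fibre principal bundle $E_{C(GL(X))}$ is trivial, that is, that the locally constant cocycle $q(g)=\{q(g_{ij})\}$ is a coboundary of a locally constant $0$-cochain. For part (1), if each $g_{ij}$ maps $U_i\cap U_j$ into $GL_0(X)$, then by definition of the quotient homomorphism $q:GL(X)\to C(GL(X))$ every $q(g_{ij})$ is the constant map with value the identity element of $C(GL(X))$. Thus the cocycle defining $E_{C(GL(X))}$ is the trivial cocycle, so $E_{C(GL(X))}$ is literally the product bundle $M(H^\infty)\times C(GL(X))$, hence trivial in the category of principal bundles with discrete fibres; Theorem~\ref{bundle} then yields the holomorphic triviality of $E$. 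The parenthetical case is immediate, since $GL(X)$ connected means $GL_0(X)=GL(X)$, so the hypothesis of (1) holds automatically for every cocycle.

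For part (2), assume $E$ is trivial as a continuous Banach vector bundle. Restricting a continuous trivialization over each $U_i$ and composing with the local trivialization $E|_{U_i}\cong U_i\times X$ coming from the cocycle data, one obtains maps $h_i\in C(U_i;GL(X))$ with $g_{ij}=h_ih_j^{-1}$ on $U_i\cap U_j$ (with the convention induced by the equivalence relation defining $E$). Applying $q$ and using its continuity gives $q(g_{ij})=q(h_i)\,q(h_j)^{-1}$, where $q(h_i)\in C(U_i;C(GL(X)))$. Since the group $C(GL(X))=GL(X)/GL_0(X)$ is discrete, every continuous $C(GL(X))$-valued map is locally constant; hence $\{q(h_i)\}$ is a locally constant $0$-cochain whose coboundary is the defining cocycle of $E_{C(GL(X))}$. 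Therefore $E_{C(GL(X))}$ is trivial in the category of principal bundles with discrete fibres, and Theorem~\ref{bundle} again gives that $E$ is holomorphically trivial.

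Both parts are essentially bookkeeping once Theorem~\ref{bundle} is available, so I do not expect a serious obstacle here; the only point deserving a moment's attention is the identification of continuity into the discrete group $C(GL(X))$ with local constancy, which is precisely what allows a continuous trivialization in (2) to descend to a trivialization in the discrete-fibre category, together with the elementary observation in (1) that the hypothesis forces $q(g)$ to be the identity cocycle rather than merely a coboundary.
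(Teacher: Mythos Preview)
Your proof is correct and essentially matches the paper's argument. For part~(2) your reasoning is identical to the paper's (up to the harmless left/right convention $g_{ij}=h_ih_j^{-1}$ versus $g_i^{-1}g_j=g_{ij}$); for part~(1) the paper cites Theorem~\ref{principal1} directly (the cocycle lands in $GL_0(X)$, so it defines a principal $GL_0(X)$-bundle, trivial by that theorem), whereas you route through Theorem~\ref{bundle} by observing $q(g)$ is the identity cocycle---an equally valid and arguably more natural deduction for a corollary of Theorem~\ref{bundle}.
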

In particular, the result is valid for spaces $X$ with contractible group $GL(X)$.
The class of such spaces include infinite-dimensional Hilbert spaces, spaces $\ell^p$ and $L^p[0,1]$, $1\le p\le \infty$, $c_0$ and $C[0,1]$, spaces $L_p(\Omega,\mu)$, $1<p<\infty$, of $p$-integrable measurable functions on an arbitrary
measure space $\Omega$, and some classes of reflexive symmetric function spaces; the class of spaces $X$ with connected but not simply connected group $GL(X)$ include finite dimensional Banach spaces, finite direct products of James spaces etc., see, e.g., \cite{M} and references therein. There are also Banach spaces $X$ whose linear groups $GL(X)$ are not connected. E.g., the groups of connected components of spaces $\ell^{p}\times\ell^q$, $1\le p<q<\infty$, are isomorphic to $\Z$, see  \cite{Do}.

We deduce Theorem \ref{bundle} from more general results on triviality of holomorphic principal bundles on $M(H^\infty)$, see Theorems \ref{principal1}, \ref{bundle1}. 
\subsect{}
We apply Theorem \ref{bundle} to the Sz.-Nagy operator corona problem \cite{SN} posed in 1978. In its formulation $H^\infty(L(X,Y))$ stands for the Banach space of holomorphic functions $F$ on $\Di$ with values in the space of bounded linear operators $X\to Y$ of complex Banach spaces $X,Y$ with norm
$\|F\|:=\sup_{z\in\Di}\|F(z)\|_{L(X,Y)}$. 
\begin{Problem}[Sz.-Nagy]
Let $F\in H^\infty(L(H_1,H_2))$, where $H_i$, $i=1,2$, are separable Hilbert spaces,  satisfy $\|F(z)x\|\ge\delta\|x\|$ for every $x\in H_1$ and every $z\in\Di$, where $\delta>0$ is a constant. Does there exist $G\in H^\infty (L(H_2,H_1))$ such that $G(z)F(z)=I_{H_1}$ for every $z\in\Di$?
\end{Problem}

This problem is of great interest in operator theory (angles between invariant
subspaces, unconditionally convergent spectral decompositions), as well as in control theory. It is also related
to the study of submodules of $H^\infty$ and to many other subjects of analysis. Obviously, the condition imposed on $F$ is
necessary. It implies existence of a uniformly bounded family of left inverses of
$F(z)$, $z\in\Di$. The question is whether this condition is sufficient for the existence of
a bounded analytic left inverse of $F$. In general, the answer is known to be negative (see
\cite{T1}, \cite{T2}, \cite{TW} and references therein). But in some specific cases it is positive. In particular, Carleson's theorem stating that a Bezout equation $\sum_{i=1}^n g_i f_i=1$ is solvable with $\{g_i\}_{i=1}^n\subset H^\infty$ as soon as $\{f_i\}_{i=1}^n\subset H^\infty$ satisfies $\max_{1\le i\le n}|f_i(z)|>\delta>0$ for every $z\in\Di$ means that the answer is positive when ${\rm dim}\, H_1=1$, ${\rm dim}\, H_2=n<\infty$.
More generally, the answer is positive as soon as ${\rm dim}\, H_1<\infty$. For a long time there were no 
positive results in the case ${\rm dim}\, H_1 =\infty$. The first positive results in this
case were obtained in \cite{V}. Following this paper consider a more general 
\begin{Problem 1}
Let $X_1,X_2$ be complex Banach spaces and  $F\in H^\infty(L(X_1,X_2))$ be such that for each $z\in\Di$ there exists a left inverse $G_z$ of $F(z)$ satisfying $\sup_{z\in\Di}\|G_z\|<\infty$. Does there exist $G\in H^\infty(L(X_2,X_1))$ such that $G(z)F(z)=I_{X_1}$ for every $z\in\Di$?
\end{Problem 1}
Since in this general setting the answer is negative, as in \cite{V} we restrict ourselves to the case of $F\in H_{\rm comp}^\infty(L(X_1,X_2))$, the space of holomorphic functions on $\Di$ with relatively compact images in $L(X_1,X_2)$. Then the answer is positive for 
$F$ that can be uniformly approximated by finite sums $\sum f_k(z)L_k$,
where $f_k\in H^\infty$ and $L_k\in L(X_1,X_2)$, see \cite[Th. 2.1]{V}. The question of whether each $F\in H_{\rm comp}^\infty(L(X_1,X_2))$ can be obtained in that form is closely related to the still open problem about the Grothendieck approximation property for $H^\infty$.

Now, under the above restriction we obtain a much stronger result.
\begin{Th}\label{complem}
Let $F\in H_{\rm comp}^\infty(L(X_1,X_2))$, where $X_i$, $i=1,2$, are complex Banach spaces, be such that for every $z\in\Di$ there exists a left inverse $G_z$ of $F(z)$ satisfying $\sup_{z\in\Di}\|G_z\|<\infty$. Let $Y:={\rm Ker}\, G_0$. Assume that $GL(Y)$ is connected. Then
there exist functions $H\in H_{\rm comp}^\infty(L(X_1\oplus Y, X_2))$, $G\in H_{\rm comp}^\infty(L(X_2, X_1\oplus Y))$ such that $H(z)G(z)=I_{X_2}$, $G(z)H(z)=I_{X_1\oplus Y}$ and $H(z)|_{X_1}=F(z)$ for all $z\in\Di$.
\end{Th}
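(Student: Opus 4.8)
The plan is to recast the conclusion as the holomorphic triviality of a principal bundle on $M(H^\infty)$ and then apply Theorem~\ref{bundle} and, chiefly, its principal-bundle strengthenings, Theorems~\ref{principal1} and~\ref{bundle1}. I start from the local holomorphic data on $\Di$. For $z_0\in\Di$ the operator $G_{z_0}F(z)$ is invertible for $z$ near $z_0$ (it is close to $G_{z_0}F(z_0)=I_{X_1}$), so
\[
Q_{z_0}(z):=F(z)\bigl(G_{z_0}F(z)\bigr)^{-1}G_{z_0}
\]
is a holomorphic family of bounded idempotents of $X_2$ with range $F(z)X_1$ defined near $z_0$; hence $\{F(z)X_1\}$ is a holomorphic subbundle of $\Di\times X_2$, the spaces $\ker Q_{z_0}(z)$ are local holomorphic complements, and $\mathcal Q:=(\Di\times X_2)/\{F(z)X_1\}$ is a holomorphic Banach vector bundle on $\Di$. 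Since $F$ has relatively compact image it extends to $\widehat F\in C(M(H^\infty);L(X_1,X_2))$ (extend each scalar functional by the Gelfand transform; $L(X_1,X_2)^*$ separates points), and --- using this, the uniform left-invertibility and the analysis of $M(H^\infty)$ in the spirit of \cite{Br2} --- one shows that the bounded-left-inverse property propagates to all of $M(H^\infty)$: every $\widehat F(\xi)$ admits a bounded left inverse $\widehat G_\xi$ (obtained from a continuous idempotent onto $F(\cdot)X_1$, built by patching the $(G_{z_0}F(\cdot))^{-1}G_{z_0}$ over $\Di$ --- a convex combination of idempotents with a common range is again such an idempotent --- and extended to $M(H^\infty)$), whence, just as at an interior point, a holomorphic local idempotent onto $\widehat F(\cdot)X_1$ near $\xi$. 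Thus $\mathcal Q$ is the restriction to $\Di$ of a holomorphic Banach vector bundle on $M(H^\infty)$; the idempotent $I_{X_2}-\widehat F(0)G_0$ identifies its fibre over $0$ with $\ker G_0=Y$, so $\mathcal Q$ has structure group $GL(Y)$, and since $GL(Y)$ is connected, Corollary~\ref{cor1.2}(1) already gives $\mathcal Q$ holomorphically trivial --- but completing $F$ requires more.

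Accordingly, for $w\in M(H^\infty)$ let $\mathcal P_w$ be the set of invertible operators $\phi\in L(X_1\oplus Y,X_2)$ with $\phi|_{X_1}=\widehat F(w)$; each $\mathcal P_w$ is a right torsor under the Banach Lie group
\[
P:=\{A\in GL(X_1\oplus Y):A(x,0)=(x,0)\ \text{for all }x\in X_1\}\cong GL(Y)\ltimes L(Y,X_1).
\]
Since all fibres of $\mathcal Q$ are isomorphic, $\ker\widehat G_w\cong Y$; fixing isomorphisms $\iota_w\colon Y\to\ker\widehat G_w$, the maps
\[
(x_1,y)\longmapsto\widehat F(z)x_1+\bigl(I_{X_2}-\widehat F(z)\bigl(\widehat G_w\widehat F(z)\bigr)^{-1}\widehat G_w\bigr)\iota_w(y)
\]
are holomorphic local sections of $\mathcal P$ near $w$, and their transition functions are holomorphic $P$-valued maps of relatively compact image; they assemble the $\mathcal P_w$ into a holomorphic principal $P$-bundle $\mathcal P$ on $M(H^\infty)$. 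Because $L(Y,X_1)$ is contractible, $P$ is homotopy equivalent to $GL(Y)$, hence connected, so its group of connected components is trivial; Theorems~\ref{principal1} and~\ref{bundle1} then give that $\mathcal P$ is holomorphically trivial, so it has a holomorphic section $w\mapsto H(w)$. By construction each $H(w)\in L(X_1\oplus Y,X_2)$ is invertible with $H(w)|_{X_1}=\widehat F(w)$, and $w\mapsto H(w)$ is continuous on $M(H^\infty)$ and holomorphic on $\Di$; putting $G(w):=H(w)^{-1}$, holomorphic on $\Di$ because inversion is holomorphic on the invertibles, we obtain $H(z)G(z)=I_{X_2}$, $G(z)H(z)=I_{X_1\oplus Y}$ and $H(z)|_{X_1}=F(z)$ for all $z\in\Di$.

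The $H^\infty_{\rm comp}$ assertions are then automatic: $w\mapsto H(w)$ and $w\mapsto G(w)=H(w)^{-1}$ are continuous on the compact space $M(H^\infty)$ --- the latter because inversion is continuous on the invertibles and every $H(w)$ is invertible --- so their images over $\Di$ are relatively compact, and holomorphy on $\Di$ was noted above. The step I expect to be the main obstacle is the one flagged in the first paragraph: transporting everything from $\Di$ to all of $M(H^\infty)$ --- the propagation of the bounded-left-inverse condition to the maximal ideal space and the passage of $\mathcal P$ to a genuinely holomorphic bundle there --- which is the analytic heart of the matter and the only place relative compactness of $F$ enters essentially. Once $\mathcal P$ sits over $M(H^\infty)$ as a holomorphic principal bundle with connected structure group, its triviality, and hence the existence of $H$ and $G$, is immediate from Theorems~\ref{principal1} and~\ref{bundle1}.
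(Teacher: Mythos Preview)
Your overall architecture matches the paper's: extend $F$ to $M(H^\infty)$, build local holomorphic trivializations $B_i(z)(v_1,v_2)=F(z)v_1+P_i(z)S_i v_2$ of the would-be isomorphism $X_1\oplus Y\to X_2$, and show the resulting transition cocycle is a holomorphic coboundary. The paper's Lemma~\ref{local1} carries out your first paragraph cleanly (pick $x_i\in U_i\cap\Di$ and set $G_i(z)=(G_{x_i}F(z))^{-1}G_{x_i}$), and then proves concretely that all $\ker G_i(z)$ are isomorphic to $Y$ --- a point you assert but do not argue.

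The genuine gap is your appeal to Theorems~\ref{principal1} and~\ref{bundle1} for the principal $P$-bundle with
\[
P=\{A\in GL(X_1\oplus Y):A|_{X_1}=I_{X_1}\}\cong GL(Y)\ltimes L(Y,X_1).
\]
Those theorems are stated and proved only for fibres of the form $B_0^{-1}$ or $B^{-1}$ with $B$ a complex unital Banach algebra; the entire Cartan-type machinery of Section~4 is built for covers of such groups. The parabolic group $P$ is \emph{not} of this form: already for $X_1=Y=\Co$ one gets the non-abelian affine group $\{(a,b):a\in\Co^\ast,\ b\in\Co\}$, whereas the invertibles of every two-dimensional unital complex algebra ($\Co[x]/(x^2)$ or $\Co\times\Co$) are abelian. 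So ``$P$ connected $\Rightarrow$ apply Theorem~\ref{principal1}'' is not available.

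The paper closes exactly this gap by a two-step reduction that respects the algebraic shape of $P$. Writing the transition cocycle as
\[
B_{ij}=\begin{pmatrix} I_{X_1}&E_{ij}\\ 0&D_{ij}\end{pmatrix},
\]
it first trivializes the $GL(Y)$-valued cocycle $\{D_{ij}\}$ via Theorem~\ref{bundle} (legitimate since $GL(Y)=L(Y)^{-1}$ and $GL(Y)$ is connected), then conjugates to a unipotent cocycle $\bigl(\begin{smallmatrix}I_{X_1}&F_{ij}\\ 0&I_Y\end{smallmatrix}\bigr)$ and kills the additive $L(Y,X_1)$-valued cocycle $\{F_{ij}\}$ using $H^1(M(H^\infty);L(Y,X_1))=0$ from \cite[Th.~1.4]{Br2}. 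That second ingredient --- vanishing of the \emph{additive} Banach-valued $H^1$ --- is precisely what your one-shot $P$-bundle argument is missing; once you insert it, your proof becomes the paper's.
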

\begin{C}\label{cor1}
Let $F\in H_{\rm comp}^\infty(L(H_1,H_2))$, where $H_i$, $i=1,2$, are Hilbert spaces,  satisfy $\|F(z)x\|\ge\delta\|x\|$ for every $x\in H_1$ and every $z\in\Di$, where $\delta>0$ is a constant. Let $Y:=\bigl(F(0)(H_1)\bigr)^{\bot}$. Then there exist functions $H\in H_{\rm comp}^\infty(L(H_1\oplus Y, H_2))$, $G\in H_{\rm comp}^\infty(L(H_2, H_1\oplus Y))$ such that $H(z)G(z)=I_{H_2}$, $G(z)H(z)=I_{H_1\oplus Y}$ and $H(z)|_{H_1}=F(z)$ for all $z\in\Di$.
\end{C}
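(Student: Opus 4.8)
The plan is to deduce Corollary \ref{cor1} from Theorem \ref{complem} by producing, in the Hilbert space setting, an explicit uniformly bounded pointwise family of left inverses of $F$ whose value at $0$ has kernel exactly $Y$, and by observing that $GL(Y)$ is automatically connected here.

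First I would note that the hypothesis $\|F(z)x\|\ge\delta\|x\|$ for all $x\in H_1$, $z\in\Di$, is equivalent to $F(z)^*F(z)\ge\delta^2 I_{H_1}$; in particular $F(z)^*F(z)\in GL(H_1)$ with $\|(F(z)^*F(z))^{-1}\|\le\delta^{-2}$. Hence the operators
\[
G_z:=(F(z)^*F(z))^{-1}F(z)^*\in L(H_2,H_1),\qquad z\in\Di,
\]
are well defined, satisfy $G_zF(z)=I_{H_1}$, and obey $G_zG_z^*=(F(z)^*F(z))^{-1}\le\delta^{-2}I_{H_1}$, so that $\sup_{z\in\Di}\|G_z\|\le\delta^{-1}<\infty$. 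Thus $\{G_z\}_{z\in\Di}$ satisfies the hypothesis of Theorem \ref{complem} with $X_1=H_1$, $X_2=H_2$ (which are complex Banach spaces).

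Next I would identify $\mathrm{Ker}\,G_0$. Since $(F(0)^*F(0))^{-1}$ is invertible, $G_0h=0$ holds if and only if $F(0)^*h=0$, i.e. if and only if $h$ is orthogonal to the range $F(0)(H_1)$; therefore $\mathrm{Ker}\,G_0=\bigl(F(0)(H_1)\bigr)^{\bot}=Y$, exactly the space appearing in the statement. Being an orthogonal complement, $Y$ is a closed subspace of $H_2$, hence itself a Hilbert space, and $GL(Y)$ is connected: this is the classical connectedness of $GL_n(\Co)$ when $\dim Y<\infty$, Kuiper's theorem (which in fact yields contractibility) when $\dim Y=\infty$, and trivial when $Y=\{0\}$ (the case $F(0)$ onto). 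These facts were recalled after Corollary \ref{cor1.2}.

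With every hypothesis of Theorem \ref{complem} verified, the functions $H\in H_{\rm comp}^\infty(L(H_1\oplus Y,H_2))$ and $G\in H_{\rm comp}^\infty(L(H_2,H_1\oplus Y))$ it produces satisfy $H(z)G(z)=I_{H_2}$, $G(z)H(z)=I_{H_1\oplus Y}$ and $H(z)|_{H_1}=F(z)$ for all $z\in\Di$, which is the assertion. I do not expect a genuinely hard step here: the corollary is a direct specialization of Theorem \ref{complem}, and the only items needing (routine) care are the uniform bound $\|G_z\|\le\delta^{-1}$ and the computation $\mathrm{Ker}\,G_0=Y$, both standard Hilbert space facts, together with quoting the connectedness of $GL(Y)$.
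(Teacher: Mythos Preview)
Your proposal is correct and follows the same route as the paper, which simply remarks that the Hilbert-space lower bound $\|F(z)x\|\ge\delta\|x\|$ yields a uniformly bounded family of left inverses so that Theorem \ref{complem} applies. You have merely made explicit the standard choice $G_z=(F(z)^*F(z))^{-1}F(z)^*$ and the identification $\mathrm{Ker}\,G_0=(F(0)(H_1))^\perp$, together with the (cited) connectedness of $GL(Y)$; these details are all correct.
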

This follows immediately from Theorem \ref{complem} because in the Hilbert case the condition of the corollary implies existence of a uniformly bounded family of left inverses of $F(z)$, $z\in\Di$.

Finally, we obtain the positive answer in Problem 1 for spaces $H_{\rm comp}^\infty$.
\begin{Th}\label{teo1.5}
Let $X_1,X_2$ be complex Banach spaces and  $F\in H_{\rm comp}^\infty(L(X_1,X_2))$ be such that for every $z\in\Di$ there exists a left inverse $G_z$ of $F(z)$ satisfying $\sup_{z\in\Di}\|G_z\|<\infty$. Then there exist $G\in H_{\rm comp}^\infty(L(X_2,X_1))$ such that $G(z)F(z)=I_{X_1}$ for every $z\in\Di$.
\end{Th}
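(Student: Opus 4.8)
The plan is to deduce Theorem~\ref{teo1.5} from Theorem~\ref{complem} by an infinite ``swindle'' that disposes of the connectedness hypothesis on $GL(Y)$.

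First I would record how a bounded holomorphic left inverse is read off from the conclusion of Theorem~\ref{complem}. Suppose $\Phi\in H_{\rm comp}^\infty(L(A,B))$ admits a uniformly bounded family $\{\Psi_z\}_{z\in\Di}$ of pointwise left inverses and that $GL(W)$ is connected, where $W:={\rm Ker}\,\Psi_0$. Theorem~\ref{complem} supplies $\mathbf H\in H_{\rm comp}^\infty(L(A\oplus W,B))$ and $\mathbf G\in H_{\rm comp}^\infty(L(B,A\oplus W))$ with $\mathbf G(z)\mathbf H(z)=I_{A\oplus W}$ and $\mathbf H(z)|_{A}=\Phi(z)$ for all $z\in\Di$. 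Applying the first identity to $a\in A\subset A\oplus W$ and using the second gives $\mathbf G(z)\Phi(z)a=(a,0)$; hence, with $\pi_A\colon A\oplus W\to A$ the coordinate projection, $L:=\pi_A\circ\mathbf G$ satisfies $L(z)\Phi(z)=I_A$ on $\Di$. Since composition with the fixed bounded operator $\pi_A$ sends $H_{\rm comp}^\infty(L(B,A\oplus W))$ into $H_{\rm comp}^\infty(L(B,A))$, we get $L\in H_{\rm comp}^\infty(L(B,A))$. Thus, under the connectedness assumption, $\Phi$ has a left inverse in $H_{\rm comp}^\infty$.

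To remove the assumption in the setting of Theorem~\ref{teo1.5}, I would fix a uniformly bounded family $\{G_z\}_{z\in\Di}$ of left inverses of $F(z)$, put $Y:={\rm Ker}\,G_0$, and set $Z:=\bigl(\bigoplus_{n\in\N}Y\bigr)_{c_0}$. Let $\iota\colon X_2\hookrightarrow X_2\oplus Z$ be the inclusion onto the first summand, $\pi_{X_2}\colon X_2\oplus Z\to X_2$ the coordinate projection, and $\tilde F(z):=\iota\circ F(z)\in L(X_1,X_2\oplus Z)$. Then $\tilde F\in H_{\rm comp}^\infty(L(X_1,X_2\oplus Z))$ (a composition of $F$ with the fixed bounded operator $\iota$), the family $\{G_z\circ\pi_{X_2}\}_{z\in\Di}$ is uniformly bounded and consists of left inverses of $\tilde F(z)$, and ${\rm Ker}(G_0\circ\pi_{X_2})=Y\oplus Z$. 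Now $Y\oplus Z\cong Z$ (absorb the extra copy of $Y$ into the $c_0$-sum) and $Z\cong\bigl(\bigoplus_{n\in\N}Z\bigr)_{c_0}$, so by the facts on contractibility of linear groups quoted above (see \cite{M}), $GL(Z)$ is contractible and in particular $GL(Y\oplus Z)$ is connected. Applying the previous paragraph with $\Phi=\tilde F$, $A=X_1$, $B=X_2\oplus Z$, $W=Y\oplus Z$, I obtain $L\in H_{\rm comp}^\infty(L(X_2\oplus Z,X_1))$ with $L(z)\tilde F(z)=I_{X_1}$ on $\Di$. Then $G(z):=L(z)\circ\iota\in L(X_2,X_1)$ satisfies $G(z)F(z)=L(z)(\iota\circ F(z))=L(z)\tilde F(z)=I_{X_1}$, and $G\in H_{\rm comp}^\infty(L(X_2,X_1))$ since $\iota$ is a fixed bounded operator; this is the required left inverse.

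The deduction is short, and the only step that is not pure bookkeeping is the verification that ${\rm Ker}(G_0\circ\pi_{X_2})$ has connected linear group. This is where one genuinely uses that $Z$ is self-similar under countable $c_0$-sums together with Mityagin's theorem that $GL(W)$ is contractible whenever $W\cong\bigl(\bigoplus_{n\in\N}W\bigr)_{c_0}$. The one thing to watch is that one cannot enlarge $F$ by adjoining the identity $I_Z$ (that would destroy compactness of the values), which is why the enlargement is by the \emph{zero} operators $X_1\to Z$ and the kernel of the new pointwise left inverse has to be computed directly.
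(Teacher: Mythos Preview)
Your proof is correct and takes a genuinely different route from the paper's. The paper argues directly with the bundle picture set up in the proof of Theorem~\ref{complem}: it interprets the cocycle $\{B_{ij}\}$ of \eqref{matrix} as a short exact sequence $0\to E_1\to E\to E_2\to 0$ of holomorphic Banach vector bundles on $M(H^\infty)$ with $E$ trivial, shows $E_1\oplus E_2$ is topologically and hence (by Corollary~\ref{cor1.2}(2)) holomorphically trivial, and then uses the vanishing $H^1(M(H^\infty);L(X_1\oplus Y,X_1))=0$ from \cite{Br2} to conclude the sequence splits holomorphically; the left inverse $G$ is read off from the splitting. Your approach sidesteps all of this by a stabilization: you pass from $F$ to $\tilde F=\iota\circ F$ with target $X_2\oplus c_0(Y)$, forcing the new kernel $Y\oplus c_0(Y)\cong c_0(Y)$ to have connected linear group, and then simply invoke Theorem~\ref{complem} and project back. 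This is shorter and imports less of the paper's bundle machinery (no Corollary~\ref{cor1.2}(2), no exact-sequence argument), at the price of relying on an external Kuiper-type theorem rather than on results proved within the paper.

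One point to secure: the assertion that $GL(Z)$ is contractible whenever $Z\cong\bigl(\bigoplus_{n\in\N}Z\bigr)_{c_0}$, applied to $Z=c_0(Y)$ with $Y$ an \emph{arbitrary} Banach space, is the crux of your reduction. The paper's own list of examples drawn from \cite{M} does not include $c_0(Y)$ for general $Y$, so you should verify that Mityagin's survey actually states the result in this generality, or supply a more precise reference. The Kuiper--Mityagin rotation argument does go through for such $Z$ (the Schauder decomposition into copies of $Y$, together with the shift isometry and the isomorphism of each tail with $Z$, provides exactly the structure the proof needs), so at worst you must indicate this explicitly. Note also that only connectedness of $GL(Y\oplus Z)$ is required for Theorem~\ref{complem}, which is weaker than the contractibility you invoke.
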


\medskip

{\em Acknowledgment.} I thank N. K. Nikolski for useful discussions.
 
\sect{Preliminary Results}
In this part we collect some preliminary results used in the proof of Theorem \ref{bundle}.
\subsection{Runge-type approximation theorem}

A compact subset $K\subset M(H^\infty)$ is called {\em holomorphically convex} if for any $x\notin K$ there is $f\in H^\infty$ such that
\[\max_{K}|f|<|f(x)|.\]
\begin{Proposition}[\cite{Br2}, Lemma 5.1]\label{holconv}
Let $N\subset M(H^\infty)$ be an open neigbourhood of a holomorphically convex set $K$. Then there exists an open set $U\Subset N$ 
containing $K$ such that the closure $\bar U$ is holomorphically convex.
\end{Proposition}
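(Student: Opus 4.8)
The plan is to squeeze between $K$ and $N$ a compact \emph{holomorphic polyhedron} --- a finite intersection of sublevel sets $\{|f|\le1\}$, $f\in H^\infty$ --- and to take $U$ to be its interior.

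Since $M(H^\infty)$ is compact, $F:=M(H^\infty)\setminus N$ is a compact set disjoint from $K$. For each $x\in F$ the holomorphic convexity of $K$ provides $f_x\in H^\infty$ with $\max_K|f_x|<|f_x(x)|$, and after multiplying by a suitable constant we may assume $\max_K|f_x|<1<|f_x(x)|$. The open sets $V_x:=\{w\in M(H^\infty):|f_x(w)|>1\}$, $x\in F$, cover $F$, so by compactness $F\subset V_{x_1}\cup\dots\cup V_{x_n}$ for finitely many points $x_j\in F$. Writing $f_j:=f_{x_j}$, put
\[
U:=\bigcap_{j=1}^{n}\{w:|f_j(w)|<1\},\qquad P:=\bigcap_{j=1}^{n}\{w:|f_j(w)|\le1\}.
\]
Then $U$ is open, $K\subset U$ (because $\max_K|f_j|<1$ for every $j$), and $P=M(H^\infty)\setminus\bigcup_{j}V_{x_j}\subset N$. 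Since $\bar U\subset P$ and $M(H^\infty)$ is compact, $\bar U$ is compact and $U\Subset N$. Moreover $P$ is holomorphically convex: if $x\notin P$ then $|f_{j_0}(x)|>1\ge\max_P|f_{j_0}|$ for some $j_0$.

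The remaining --- and main --- point is that $\bar U$ itself, not just $P$, is holomorphically convex, i.e.\ that its holomorphically convex hull $\widehat{\bar U}:=\{x:|f(x)|\le\max_{\bar U}|f|\ \text{for all }f\in H^\infty\}$ equals $\bar U$. Taking $f=f_j$ gives $\widehat{\bar U}\subset P$. Let $x\in\widehat{\bar U}$. If $|f_j(x)|<1$ for all $j$ then $x\in U\subset\bar U$; otherwise $|f_{j_0}(x)|=1$ for some $j_0$. By the Hahn--Banach and Riesz representation theorems the evaluation homomorphism at $x$ is given by a probability measure $\mu$ on $\bar U$, and the equality $\bigl|\int_{\bar U}f_{j_0}\,d\mu\bigr|=|f_{j_0}(x)|=1$ together with $|f_{j_0}|\le1$ on $\bar U$ forces $\mu$ to be carried by $\bar U\cap\{f_{j_0}=f_{j_0}(x)\}$. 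Thus $x$ lies in the holomorphically convex hull of $\bar U\cap\{f_{j_0}=f_{j_0}(x)\}$; since $\{f_{j_0}=f_{j_0}(x)\}=\bigcap_{\varepsilon>0}\{|f_{j_0}-f_{j_0}(x)|\le\varepsilon\}$ is holomorphically convex, this effectively removes the constraint $|f_{j_0}|<1$, which suggests an induction on the number $n$ of functions. The clean part is this reduction; the step that closes the induction --- that a point of $M(H^\infty)$ lying in $\overline{\bigcap_{j\ne j_0}\{|f_j|<1\}}\cap\{f_{j_0}=f_{j_0}(x)\}$ (with $|f_{j_0}(x)|=1$) actually lies in $\bar U=\overline{\bigcap_j\{|f_j|<1\}}$ --- is, I expect, where the real work lies, and where one must use the density of $\Di$ in $M(H^\infty)$ (Carleson's corona theorem) together with the open mapping theorem for the non-constant restrictions $f_j|_\Di$. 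Once this is in place, $\widehat{\bar U}=\bar U$ and the proof is finished.
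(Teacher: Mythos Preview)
The paper does not prove this proposition here; it is imported from \cite{Br2}, Lemma~5.1. So there is no in-paper argument to compare against, and I can only assess your proposal on its own terms.

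Your construction of the holomorphic polyhedron $P=\bigcap_{j}\{|f_j|\le1\}$ with $K\subset U:=\bigcap_j\{|f_j|<1\}$ and $P\subset N$ is correct and standard, and $P$ is visibly holomorphically convex. This already yields a compact holomorphically convex set between $K$ and $N$ containing $K$ in its interior, which is what the later applications in the present paper actually use.

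What is missing is the passage from ``$P$ holomorphically convex'' to ``$\bar U$ holomorphically convex'', and you say so yourself: the step you label ``where the real work lies'' is not carried out. Two remarks on the sketch. First, the representing-measure reduction places $x$ in $\widehat{\bar U\cap\{f_{j_0}=c\}}$; to feed this into an induction on $n$ you enlarge to $\widehat{\overline{\bigcap_{j\ne j_0}\{|f_j|<1\}}}$, so the induction hypothesis only returns $x\in\overline{\bigcap_{j\ne j_0}\{|f_j|<1\}}\cap\{f_{j_0}=c\}$---precisely the unproven topological claim you isolate. Second, that claim is genuinely delicate on $M(H^\infty)$: the corona theorem supplies a net $z_\alpha\in\Di$ converging to $x$, but perturbing $z_\alpha$ inside $\Di$ to force $|f_{j_0}|<1$ while keeping $|f_j|<1$ for the remaining $j$ \emph{and} still converging to $x$ in the Gelfand topology requires control on the pseudohyperbolic size of the perturbation, and the open mapping theorem gives no uniform lower bound on how far $f_{j_0}$ moves on such a perturbation. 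Points of $M_s$ (trivial Gleason parts) make this worse, since no local open-mapping argument is available there. So ``corona plus open mapping'' is a hope, not yet an argument.

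In short: the polyhedron half is fine; the half you flag as the real work is indeed the real work, and it is not done.
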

\begin{Th}[\cite{Br2}, Theorem 1.7]\label{runge}
Let $B$ be a complex Banach space.
Any $B$-valued holomorphic function defined on a neigbourhood of a holomorphically compact set $K\subset M(H^\infty)$ can be uniformly approximated on $K$ by functions from $\mathcal O(M(H^\infty);B)$. 
\end{Th}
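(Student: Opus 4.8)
The plan is to transplant the classical Oka--Weil cut-off-and-$\bar\partial$ scheme from $\Di$ to $M(H^\infty)$: use holomorphic convexity to enclose $K$ in an analytic polyhedron cut out by \emph{global} functions, reduce the approximation to a $\bar\partial$-problem on $\Di$, and then solve that problem in a way that survives the passage to the corona $M(H^\infty)\setminus\Di$. Since the Cauchy--Green solution operator is linear and bounded and acts on $B$-valued forms by the same integral formula, I would carry out the whole argument directly for $B$-valued functions rather than reducing to the scalar case.

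First I would fix the geometry. By Proposition \ref{holconv} there is an open $U\Subset N$ with $K\subset U$ and $\bar U$ holomorphically convex, and it suffices to approximate $f$ uniformly on $K$. The complement $M(H^\infty)\setminus N$ is compact and disjoint from $\bar U$, so holomorphic convexity of $\bar U$ separates it from each point outside $N$ by a function of $H^\infty$; by compactness finitely many suffice, giving $h_1,\dots,h_m\in H^\infty$ with $\max_{\bar U}|h_j|<1$ for all $j$ while for every $w\notin N$ some $|h_j(w)|\ge 1$. The analytic polyhedron $\Pi:=\{w:\ |h_j(w)|<1,\ j=1,\dots,m\}$ then satisfies $K\subset\bar U\subset\Pi\Subset N$ and is cut out by functions continuous on all of $M(H^\infty)$. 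Because each $|h_j|$ is continuous on $M(H^\infty)$, I can form the cut-off $\chi:=\rho(|h_1|,\dots,|h_m|)$ with a smooth $\rho:\Re^m\to[0,1]$, so that $\chi$ is automatically continuous on $M(H^\infty)$, equals $1$ on a neighbourhood of $K$, and is supported in $\Pi$.

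Next comes the analytic reduction. Set $g:=\chi f$, extended by zero; it is continuous on $M(H^\infty)$, agrees with $f$ near $K$, and on $\Di$ satisfies $\bar\partial g=f\,\bar\partial\chi=:\omega$, a $B$-valued $(0,1)$-form supported in $\Pi$ away from $K$ whose coefficient is expressed through the $\overline{h_j'}$ and hence inherits control from $H^\infty$. If I can solve $\bar\partial u=\omega$ on $\Di$ with a solution $u$ that extends to a continuous $B$-valued function on $M(H^\infty)$, then $g-u$ lies in $\mathcal O(M(H^\infty);B)$ and on $K$ differs from $f$ by $-u$. The tunable approximation is then supplied by the polyhedral frame: expanding a Cauchy--Weil representation of $f$ on $\Pi$ in the variables $h_1(z),\dots,h_m(z)$ and truncating produces approximants of the form $\sum_\alpha c_\alpha\,h^\alpha$ with $H^\infty$-coefficients, the truncation error being uniformly small on $K$, while the $\bar\partial$-correction above turns these formal approximants into genuine global holomorphic functions.

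The hard part is exactly this $\bar\partial$-solvability with continuity up to the corona. Off $\Di$ the space $M(H^\infty)$ carries no smooth structure, so the disk solution $u(z)=\frac{1}{2\pi i}\int_\Di\frac{\omega_0(\zeta)}{\zeta-z}\,d\zeta\wedge d\bar\zeta$ must be shown to extend continuously across $M(H^\infty)\setminus\Di$; this is the genuinely $H^\infty$-specific analytic core, and it is where Carleson-measure and corona-type estimates enter, forcing the coefficient of $\omega$ (and thus the data built from the $h_j$) into the class for which such estimates hold. The second, intertwined difficulty is to control the error uniformly on all of $K$, including those points of $K$ lying in the corona, where only continuity — not the pointwise bounds available on $\Di$ — can be used; this is handled by the approximation theory of $H^\infty$ on holomorphically convex sets developed together with Proposition \ref{holconv}.
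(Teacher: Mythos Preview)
The paper does not supply a proof of this statement: it is quoted verbatim from \cite{Br2}, Theorem~1.7, and used as a black box. So there is no in-paper argument to compare your proposal against.

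On its own merits, your outline follows the classical Oka--Weil template (polyhedral localization, cut-off, $\bar\partial$-correction), which is almost certainly the route taken in \cite{Br2} as well, given that the $\bar\partial$-machinery of Theorem~\ref{dbar} is imported from the same source. But two steps are not yet under control. First, the approximation is not tunable as written: $g-u$ differs from $f$ on $K$ by $-u$, and nothing in your construction forces $\|u\|_K$ to be small. Your attempt to fix this via a Cauchy--Weil expansion is muddled---polynomials $\sum_\alpha c_\alpha h^\alpha$ with $c_\alpha\in B$ are already in $\mathcal O(M(H^\infty);B)$, so no $\bar\partial$-correction is needed for them, and then the link back to the cut-off scheme is lost. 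What is actually needed is either an Oka-type embedding of $\Pi$ into a polydisc via $(h_1,\dots,h_m)$ followed by a genuine extension step, or a family of cut-offs whose $\bar\partial$ pushes toward $\partial\Pi$ together with an estimate showing the corresponding solutions vanish on $K$ in the limit.

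Second, and more seriously, you explicitly defer the $\bar\partial$-solvability with continuity across the corona. The only tool available in this paper's framework is Theorem~\ref{dbar}, which requires the form to have compact support in $M_a$. Your cut-off $\chi=\rho(|h_1|,\dots,|h_m|)$ gives $\bar\partial\chi$ supported in an ``annular'' region of $\Pi$, but if $K$ meets $M_s$ (which the hypothesis allows) then so does $\Pi$, and there is no reason this annulus avoids $M_s$. Hence Theorem~\ref{dbar} does not apply directly, and your appeal to unspecified ``Carleson-measure and corona-type estimates'' is exactly the missing analytic content. The proof in \cite{Br2} must handle the interaction with $M_s$---presumably via the total disconnectedness of $M_s$ and the $C^\infty$ partitions of unity on $E(S,\beta G)$ (Proposition~\ref{partition}) rather than cut-offs built from $|h_j|$---and this is the piece your sketch does not supply.
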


\subsection{Maximal Ideal Space of $H^\infty$}

\subsubsection{}
Recall that the pseudohyperbolic metric on $\Di$ is defined by
\[
\rho(z,w):=\left|\frac{z-w}{1-\bar w z}\right|,\qquad z,w\in\Di.
\]
For $x,y\in\mathcal M(H^\infty)$ the formula
\[
\rho(x,y):=\sup\{|\hat f(y)|\, ;\, f\in H^\infty,\, \hat f(x)=0,\, \|f\|\le 1\}
\]
gives an extension of $\rho$ to $M(H^\infty)$.
The {\em Gleason part} of $x\in\mathcal M$ is then defined by $\pi(x):=\{y\in M(H^\infty)\, ;\, \rho(x,y)<1\}$. For $x,y\in M(H^\infty)$ we have
$\pi(x)=\pi(y)$ or $\pi(x)\cap\pi(y)=\emptyset$. Hoffman's classification of Gleason parts \cite{H} shows that there are only two cases: either $\pi(x)=\{x\}$ or $\pi(x)$ is an analytic disk. The former case means that there is a continuous one-to-one and onto map $L_x:\Di\to\pi(x)$ such that
$\hat f\circ L_x\in H^\infty$ for every $f\in H^\infty$. Moreover, any analytic disk is contained in a Gleason part and any maximal (i.e., not contained in any other) analytic disk is a Gleason part. By $M_a$ and $M_s$ we denote the sets of all non-trivial (analytic disks) and trivial (one-pointed) Gleason parts, respectively. It is known that $M_a\subset M(H^\infty)$ is open. Hoffman proved that $\pi(x)\subset M_a$ if and only if $x$ belongs to the closure of some interpolating sequence in $\Di$. 

\subsubsection{Structure of $M_a$}
In \cite{Br1} $M_a$ is described as a fibre bundle over a compact Riemann surface. Specifically, let $G$ be the fundamental group of a compact Riemann surface $S$ of genus $\ge 2$. Let $\ell_\infty(G)$ be the Banach algebra of bounded complex-valued functions on $G$ with pointwise multiplication and supremum norm. By $\beta G$ we denote the {\em Stone-\v{C}ech compactification} of $G$, i.e., the maximal ideal space of $\ell_\infty(G)$ equipped with the Gelfand topology.
 
The universal covering $r:\Di\to S$ is a principal fibre bundle with fibre $G$. Namely, there exists a finite open cover ${\mathcal U} = (U_i)_{i\in I}$ of $S$ by sets biholomorphic to $\Di$ and a locally constant cocycle $\bar g=\{g_{ij}\}\in Z^1({\mathcal U}; G)$ such that $\Di$ is biholomorphic to the quotient space of the disjoint union $V=\sqcup_{i\in I}U_i\times G$ by the equivalence relation $U_i\times G\ni (x, g)\sim (x, gg_{ij})\in U_j\times G$. The identification space is a fibre bundle with projection $r:\Di\to S$ induced by projections $U_i\times G\to U_i$, see, e.g., \cite[Ch.~1]{Hi}.

Next, the right action of $G$ on itself by multiplications is extended to the right continuous action of $G$ on $\beta G$. Let $\tilde r: E(S,\beta G)\to S$ be the associated with this action bundle on $S$ with fibre $\beta G$ constructed by cocycle $\bar g$. Then $E(S,\beta G)$ is a {\em compact} Hausdorff space homeomorphic to the quotient space of the disjoint union $\widetilde V=\sqcup_{i\in I}U_i\times \beta G$ by the equivalence relation $U_i\times \beta G\ni (x, \xi)\sim (x, \xi g_{ij})\in U_j\times \beta G$. The projection $\tilde r:E(S,\beta G)\to S$ is induced by projections $U_i\times \beta G\to U_i$.
Note that there is a natural embedding $V\hookrightarrow\widetilde V$ induced by the embedding $G\hookrightarrow\beta G$. This embedding commutes
with the corresponding equivalence relations and so determines an embedding of $\Di$ into $E(S,\beta G)$ as an open dense subset.
Similarly, for each $\xi\in\beta G$ there exists a continuous injection  $V\to\widetilde V$ induced by the injection $G\to\beta G$, $g\mapsto\xi g$,
commuting with the corresponding equivalence relations. Thus it determines a continuous injective map $i_{\xi}:\Di\to E(S,\beta G)$.
Let $X_G:=\beta G/G$ be the set of co-sets with respect to the right action of $G$ on $\beta G$. Then $i_{\xi_1}(\Di)=i_{\xi_2}(\Di)$ if and only if $\xi_1$ and $\xi_2$ determine the same element of $X_G$.  If $\xi$ represents an element $x\in X_G$, then we write $i_x(\Di)$ instead of $i_{\xi}(\Di)$. In particular, $E(S,\beta G)=\sqcup_{x\in X_G}i_x(\Di)$.

Let $U\subset E(S,\beta G)$ be open. We say that a function $f\in C(U)$ is holomorphic if $f|_{U\cap\Di}$ is holomorphic in the usual sense. The set of holomorphic on $U$ functions is denoted by $\mathcal O(U)$. It was shown in \cite[Th.~2.1]{Br1} that each $h\in H^\infty(U\cap\Di)$ is extended to a unique holomorphic function $\hat h\in \mathcal O(U)$. In particular, the restriction map $\mathcal O(E(S,\beta G))\to H^\infty(\Di)$ is an isometry of Banach algebras. Thus the quotient space of $E(S,\beta G)$ (equipped with the factor topology) by the equivalence relation
$x\sim y \Leftrightarrow f(x)=f(y)$ for all $f\in \mathcal O(E(S,\beta G))$ is homeomorphic to $M(H^\infty)$. By $q$ we denote the quotient map 
$E(S,\beta G)\to M(H^\infty)$.

A sequence $\{g_n\}\subset G$ is said to be interpolating if $\{g_n(0)\}\subset\Di$ is interpolating for $H^\infty$ (here $G$ acts on $\Di$ by M\"{o}bius transformations). Let $G_{in}\subset\beta G$ be the union of closures of all interpolating sequences in $G$. It was shown that $G_{in}$ is an open dense subset of $\beta G$ invariant with respect to the right action of $G$.
The associated with this action bundle $E(S, G_{in})$ on $S$ with fibre $G_{in}$ constructed by the cocycle $\bar g\in Z^1(\mathcal U ;G)$ is an open dense subbundle of $E(S,\beta G)$ containing $\Di$. It was established in \cite{Br1} that 
$q$ maps $E(S, G_{in})$ homeomorphically onto $M_a$ so that for each $\xi\in G_{in}$ the set $q\bigl(i_{\xi}(\Di)\bigr)$ coincides with the Gleason part $\pi\bigl(q(i_\xi (0))\bigr)$.  Also, for distinct $x,y\in E(S,\beta G)$ with $x\in E(S, G_{in})$ there exists $f\in  \mathcal O(E(S,\beta G))$ such that $f(x)\ne f(y)$. Thus $q(x)=x$ for all $x\in E(S,G_{in})$, i.e.,  $E(S,G_{in})=M_a$.

From the definition of $E(S, \beta G)$ follows that for a simply connected open subset $U\subset S$ restriction $E(S, \beta G)|_U\, \bigl(:=\tilde r^{-1}(U)\bigr)$ is a trivial bundle, i.e., there exists an isomorphism of bundles (with fibre $\beta G$) 
$\varphi: E(S, \beta G)|_U\to U\times \beta G$, $\varphi(x):=(\tilde r(x),\tilde\varphi(x))$, $x\in E(S,\beta G)|_U$, mapping $\tilde r^{-1}(U)\cap\Di$ biholomorphically onto $U\times G$.
A subset $W\subset\tilde r^{-1}(U)$ of the form $R_{U,H}:=\varphi^{-1}(U\times H)$, $H\subset \beta G$, is called {\em rectangular}. The base of topology on $E(S, G_{in})\, (:=M_a)$ consists of rectangular sets $R_{U,H}$ with $U\subset S$ biholomorphic to $\Di$ and $H\subset G_{in}$ being the closure of an interpolating sequence in $G$ (so $H$ is a clopen subset of $\beta G$). Another base of topology on $M_a$ is given by sets of the form $\{x\in M_a\, ;\, |\hat B(x)|<\varepsilon\}$, where $B$ is an interpolating Blaschke product (here $\hat B$ is the extension of $B$ to $M(H^\infty)$ by means of the Gelfand transform). This follows from the fact that for a sufficiently small $\varepsilon$ the set $B^{-1}(\Di_\varepsilon)\subset\Di$, $\Di_\varepsilon:=\{z\, ;\, |z|<\varepsilon\}$, is biholomorphic to $\Di_\varepsilon\times B^{-1}(0)$, see \cite[Ch.~X, Lm.~1.4]{Ga}. Hence,
$\{x\in M_a\, ;\, |\hat B(x)|<\varepsilon\}$ is biholomorphic to $\Di_\varepsilon\times \hat B^{-1}(0)$.
\subsubsection{Structure of $M_s$} It was proved in \cite{S2}, that the set $M_s$ of trivial Gleason parts is totally disconnected, i.e., ${\rm dim}\, M_s=0$ (because $M_s$ is compact). 

Also, it was proved in \cite{S1} that the covering dimension of $M(H^\infty)$ is $2$ and the \v{C}ech cohomology group $H^2(M(H^\infty),\Z)=0$.

\subsection{$\bar\partial$-equations with Support in $M_a$}
Let $X$ be a complex Banach space and $U\subset S$ be open simply connected. Let $\varphi: E(S,\beta G)|_U\to U\times \beta G$ be a trivialization as in subsection 2.2.2. We say that a function $f\in C(E(S,\beta G)|_U;X)$ belongs to the space $C^k(E(S,\beta G)|_U;X)$, $k\in\N\cup\{\infty\}$, if its pullback to $U\times \beta G$ by $\varphi^{-1}$ is of class $C^k$. In turn, a continuous $X$-valued function on $U\times \beta G$ is of class $C^k$ if regarded as a Banach-valued map $U\to C(\beta G;X)$ it has continuous derivatives of order $\le k$ (in local coordinates on $U$).

For a rectangular set $R_{U,H}\subset E(S,\beta G)|_U$ with clopen $H$ a function $f$ on $R_{U,H}$ is said to belong to the space $C^k(R_{U,H};X)$ if its extension to $E(S,\beta G)|_U$ by $0$ belongs to $C^k(E(S,\beta G)|_U;X)$. 

For an open $V\subset E(S;\beta G)$ a continuous function $f$ on $V$ belongs to the space $C^k(V;X)$ if its restriction to each
$R_{U,H}\subset V$  with $H$ clopen belongs to $C^k(R_{U,H};X)$. 

In the proofs we use the following results.
\begin{Proposition}[\cite{Br2}, Proposition 3.2]\label{partition}
For a finite open cover of $E(S;\beta G)$ there exists a $C^\infty$ partition of unity subordinate to it.
\end{Proposition}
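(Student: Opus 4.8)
The plan is to reduce the statement to the genuinely product situation inside local trivializations, and there to exploit the fact that the fibre $\beta G$ is totally disconnected, so that the only ``smooth'' direction is the base $S$, a compact $C^\infty$ surface carrying ordinary smooth partitions of unity.

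First I would refine the given finite cover. Since $E(S,\beta G)$ is locally trivial over coordinate disks and $\beta G$ has a base of clopen sets, the rectangular sets $R_{U,H}=\varphi^{-1}(U\times H)$ with $U\subset S$ a coordinate disk and $H\subseteq\beta G$ clopen form a base of the topology of the compact space $E(S,\beta G)$. Hence I may choose finitely many $R_{U_j,H_j}$, $j=1,\dots,N$, each contained in a member of the given cover, that together cover $E(S,\beta G)$; a partition of unity subordinate to $(R_{U_j,H_j})$ is then subordinate to the original cover. Next I fix a finite cover of $S$ by coordinate disks $O_\alpha$ over which $E$ trivializes by $\varphi_\alpha\colon E|_{O_\alpha}\to O_\alpha\times\beta G$. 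On each connected component of an overlap the transition between two trivializations is right translation by a fixed element of $G$, a homeomorphism of $\beta G$ carrying clopen sets to clopen sets; consequently $\varphi_\alpha(R_{U_j,H_j}\cap E|_{O_\alpha})$ is a finite union of sets of product form $W\times H'$ with $W\subset O_\alpha$ open and $H'\subseteq\beta G$ clopen. Thus inside each chart the trace of the cover is a finite family of products (open in base) $\times$ (clopen in fibre).

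The construction then proceeds chart by chart. In $O_\alpha$ let $\{A^\alpha_1,\dots,A^\alpha_p\}$ be the atoms of the finite Boolean algebra generated by the clopen sets $H'$ occurring there; these are clopen, pairwise disjoint, cover $\beta G$, and each $H'$ is a union of atoms. For a fixed atom $A^\alpha_l$ the base parts $W$ of those products with $A^\alpha_l\subseteq H'$ cover $O_\alpha$, because the rectangular sets cover $E$; so I may pick a $C^\infty$ partition of unity $(\lambda^\alpha_{k,l})_k$ on $O_\alpha$ subordinate to these $W$'s. I also fix a $C^\infty$ partition of unity $(\rho_\alpha)$ on $S$ subordinate to $(O_\alpha)$. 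For each cover index $j$ I define $\phi_j$ by summing, over all $\alpha$ and all $(k,l)$ whose product $W\times H'$ arose from $R_{U_j,H_j}$, the building blocks
\[
(s,\xi)\longmapsto \rho_\alpha(s)\,\lambda^\alpha_{k,l}(s)\,\chi_{A^\alpha_l}(\xi),
\]
where $\chi_{A^\alpha_l}$ is the (continuous, because $A^\alpha_l$ is clopen) characteristic function transported to the fibre via $\varphi_\alpha$. Each block is non-negative and supported in $R_{U_j,H_j}$, since $s\in W$ and $\xi\in A^\alpha_l\subseteq H'$ force $(s,\xi)\in W\times H'$. At any point $(s,\xi)$ and any $\alpha$ with $\rho_\alpha(s)\neq0$, exactly one atom contains $\xi$ and the corresponding $\sum_k\lambda^\alpha_{k,l}(s)=1$, so the $\alpha$-contribution equals $\rho_\alpha(s)$; summing over $\alpha$ yields $\sum_j\phi_j\equiv1$.

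Finally I would verify the regularity required by the definition of $C^\infty(E(S,\beta G);\Co)$: on each chart the pullback of every building block is $s\mapsto \rho_\alpha(s)\lambda^\alpha_{k,l}(s)\,\chi_{A^\alpha_l}$, a smooth scalar function times a fixed element of $C(\beta G)$, hence a $C^\infty$ map $O_\alpha\to C(\beta G)$; finite sums and restriction-and-extension-by-zero to any rectangular $R_{U,H}$ with $H$ clopen preserve this form, so each $\phi_j$ lies in $C^\infty$. The main obstacle is the usual one for partitions of unity, namely forcing the pieces to sum exactly to $1$ without destroying smoothness or subordination, and the device that resolves it here is the passage to the atoms of the finite Boolean algebra generated by the fibre sets: it \emph{diagonalises} the totally disconnected fibre into a finite clopen partition and reduces each atom to an ordinary smooth partition of unity on the surface $S$. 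The only other point needing care is that the fibre parts stay clopen under change of trivialization, which holds because the structure cocycle $\bar g$ is locally constant and $G$ acts by right translations.
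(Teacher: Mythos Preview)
The proposition is stated in the present paper without proof, as a citation of \cite[Proposition~3.2]{Br2}; there is thus no argument here against which to compare your proposal directly.

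That said, your approach is sound and is the natural one. The essential ingredients are all correctly identified: (i) $\beta G$, being the Stone--\v{C}ech compactification of a discrete set, is zero-dimensional, so rectangular sets $R_{U,H}$ with $H$ clopen form a base of $E(S,\beta G)$; (ii) the structure cocycle $\bar g$ is locally constant, so transitions between trivializations act by right translation by a fixed element of $G$ on each connected component of an overlap and therefore carry clopen fibre sets to clopen fibre sets; (iii) a function of the form $(\text{smooth on }S)\times(\text{characteristic of a clopen set in }\beta G)$ is a $C^\infty$ map $U\to C(\beta G)$ in the precise sense of subsection~2.3; and (iv) passing to the atoms of the finite Boolean algebra generated by the clopen fibre parts reduces the problem, atom by atom, to an ordinary smooth partition of unity on the open surface $O_\alpha$ (which exists since there are only finitely many base pieces $W$). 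The verification that the building blocks sum to $1$ and are supported in the prescribed members of the cover is correct as written. This is essentially the argument one would expect to find in \cite{Br2}.
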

\begin{C}[\cite{Br2}, Corollary 3.4]\label{cutoff1}
Let $U\Subset V\subset E(S,\beta G)$ be open. Then there exists a nonnegative $C^\infty$ function $\rho$ on $E(S,\beta G)$ such that $\rho=1$ in an open neighbourhood of $\bar U$ and ${\rm supp}\,\rho\subset V$.
\end{C}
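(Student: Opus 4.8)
The plan is to obtain $\rho$ directly from the $C^\infty$ partition of unity furnished by Proposition \ref{partition}, applied to a suitably chosen two-element cover. Since $U\Subset V$, the closure $\bar U$ is a compact subset of the open set $V$, and since $E(S,\beta G)$ is compact Hausdorff the two open sets
\[
V_1:=V,\qquad V_2:=E(S,\beta G)\setminus\bar U
\]
form a finite open cover of $E(S,\beta G)$: every point either lies in $\bar U\subset V=V_1$ or in the complement $V_2$.

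By Proposition \ref{partition} there is a $C^\infty$ partition of unity $\{\rho_1,\rho_2\}$ subordinate to $\{V_1,V_2\}$, so that $\rho_1,\rho_2\ge 0$, $\rho_1+\rho_2\equiv 1$, ${\rm supp}\,\rho_1\subset V_1=V$ and ${\rm supp}\,\rho_2\subset V_2=E(S,\beta G)\setminus\bar U$. I would then set $\rho:=\rho_1$. By construction $\rho$ is a nonnegative $C^\infty$ function with ${\rm supp}\,\rho\subset V$. Moreover ${\rm supp}\,\rho_2$ is a closed set disjoint from $\bar U$, so its complement $W:=E(S,\beta G)\setminus{\rm supp}\,\rho_2$ is an open neighbourhood of $\bar U$; on $W$ one has $\rho_2\equiv 0$ and hence $\rho=\rho_1=1-\rho_2\equiv 1$. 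This yields all the required properties.

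The argument is essentially immediate once Proposition \ref{partition} is in hand, so there is no substantial obstacle; the only points needing (routine) care are verifying that $\{V_1,V_2\}$ genuinely covers $E(S,\beta G)$ --- which is exactly where the relative compactness hypothesis $\bar U\subset V$ enters --- and observing that the subordination condition ${\rm supp}\,\rho_2\subset E(S,\beta G)\setminus\bar U$ upgrades the pointwise identity $\rho=1$ on $\bar U$ to an identity on the open neighbourhood $W$. All of the genuine analytic content, namely the construction of $C^\infty$ functions on the non-manifold space $E(S,\beta G)$ through its fibre-bundle charts $U\times\beta G$, has already been absorbed into Proposition \ref{partition}.
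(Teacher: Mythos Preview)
Your argument is correct and is exactly the standard derivation of a cutoff function from a partition of unity. Note that the paper does not supply its own proof of this corollary---it is quoted from \cite{Br2}---so there is nothing to compare against beyond observing that your route via Proposition~\ref{partition} is the natural one and would be the expected proof.
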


An $X$-valued $(0,1)$-form $\omega$ of class $C^k$ on an open $U\subset E(S,\beta G)$ is defined in each coordinate chart $R_{V,H}\subset U$ with local coordinates $(z,\xi)$ (pulled back from $V\times\beta G$ by $\varphi$) by the formula $\omega|_{R_{V,H}}:=f(z,\xi)d\bar z$, $f\in C^k(R_{V, H};X)$, so that the restriction of the family $\{\omega|_{R_{V,H}}\,;\,R_{V,H}\subset U\}$ to $\Di$ determines a global $X$-valued $(0,1)$-form of class $C^k$ on the open set $U\cap\Di\subset\Di$.  

By $\mathcal E ^{0,1}(U; X)$ we denote the space of $X$-valued $(0,1)$-forms on $U\subset E(S,\beta G)$. The operator $\bar\partial: C^\infty(U;X)\to \mathcal E ^{0,1}(U; X)$ is defined in each $R_{V,H}\subset U$ equipped with the local coordinates $(z,\xi)$ as $\bar\partial f(z,\xi):=\frac{\partial f}{\partial\bar z}(z,\xi)d\bar z$. Then the composite of the restriction map to $U\cap\Di$ with this operator coincides with the standard $\bar\partial$ operator defined on $C^\infty(U\cap\Di;X)$.

It is easy to check, using Cauchy estimates for derivatives of families of uniformly bounded holomorphic functions on $\Di$, that if $f\in {\mathcal O}(U; X)$, $U\subset E(S,\beta G)$, then $f\in C^\infty(U; X)$ and in each $R_{V,H}\subset U$ with local coordinates $(z,\xi)$ the function $f(z,\xi)$ is holomorphic in $z$. Thus $\bar\partial f=0$.

By $\mathcal E_{\rm comp}^{0,1}(M_a; X)$ we denote the class of $X$-valued $C^\infty$ (0,1)-forms on $E(S,\beta G)$ with compact supports in $M_a:=E(S,G_{in})$, i.e., $\omega\in \mathcal E_{\rm comp}^{0,1}(M_a; X)$ if there is a compact subset of $M_a$ such that in each local coordinate representation $\omega=f d\bar z$ support of $f$ belongs to it. By ${\rm supp}\,\omega$ we denote the minimal set satisfying this property. Let $\mathcal E_K^{0,1}(X)\subset \mathcal E_{\rm comp}^{0,1}(M_a; X)$ be the subspace of forms with supports in the compact set
$K\Subset M_a$.

\begin{Th}[\cite{Br2}, Theorem 3.5]\label{dbar}
There exist a norm $\|\cdot\|_{K;X}$ on $\mathcal E_K^{0,1}(X)$ and a continuous linear operator $L_{K;X}: \bigl(\mathcal E_K^{0,1}(X),\|\cdot\|_{K;X}\bigr)\to \bigl(C(M(H^\infty);X),\sup_{M(H^\infty)}\|\cdot\|_X\bigr)$ with norm bounded by a constant depending only on $K$ such that
for each $\omega\in \mathcal E_{K}^{0,1}(X)$ 
\begin{itemize}
\item[(a)]
$L_{K;X}(\omega)|_{M_a}\in C^\infty(M_a;X)$\quad and\quad $\bar\partial(L_{K;X}(\omega)|_{M_a})=\omega$;
\item[(b)]
$L_{K;X}(\omega)|_{M(H^\infty)\setminus K}\in\mathcal O(M(H^\infty)\setminus K;X)$.
\end{itemize}
\end{Th}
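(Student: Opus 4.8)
The plan is to reduce the equation to a local model, solve it slice by slice with the classical Cauchy transform, patch the local solutions with a smooth cutoff, and then remove the patching error by an iteration driven by the Runge-type approximation theorem, the norm $\|\cdot\|_{K;X}$ being chosen so that every constant produced along the way depends only on $K$. First I would localize: since $K\Subset M_a$, cover $K$ by finitely many rectangular sets $R_{U_\alpha,H_\alpha}$ (with $U_\alpha\subset S$ biholomorphic to $\Di$ and $H_\alpha$ clopen in $G_{in}$) together with the open set $M(H^\infty)\setminus K$, and choose a $C^\infty$ partition of unity $\{\rho_\alpha\}$ subordinate to it (Proposition \ref{partition}). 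By linearity it suffices to handle $\omega$ supported in a single $R_{U_\alpha,H_\alpha}$. Fix once and for all the trivialization $R_{U_\alpha,H_\alpha}\cong\Di\times H_\alpha$, in which $\omega$ reads $f(z,\xi)\,d\bar z$ with $f\in C^\infty(\Di;C(H_\alpha;X))$ supported in $\{|z|\le r\}\times H_\alpha$ for some $r<1$, and define $\|\omega\|_{K;X}$ as the maximum over $\alpha$ of a fixed $C^m$-norm ($m$ a fixed integer large enough for the estimates below) of the local representatives of $\omega$.

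Next I would apply the Cauchy--Pompeiu transform in the variable $z$,
\[
v(z,\xi):=\frac1\pi\int_{\Di}\frac{f(w,\xi)}{z-w}\,dA(w),\qquad z\in\Di ,
\]
obtaining $v\in C^\infty(\Di;C(H_\alpha;X))$ with $\partial v/\partial\bar z=f$, holomorphic in $z$ off $\operatorname{supp}_z f$, and with $\sup\|v\|\le C\|f\|_\infty$ for a constant $C$ depending only on $r$. Picking a $C^\infty$ cutoff $\chi$ on $M(H^\infty)$ with $\chi\equiv1$ near $\operatorname{supp}\omega$ and $\operatorname{supp}\chi\Subset R_{U_\alpha,H_\alpha}$ (Corollary \ref{cutoff1}), and noting that $\operatorname{supp}\chi$ is a compact subset of the open set $M_a\subset M(H^\infty)$, the zero-extension of $\chi v$ is continuous and bounded on all of $M(H^\infty)$; set $w_0:=\chi v$, so that $\bar\partial w_0=\omega-\eta_1$ where $\eta_1:=-v\,\bar\partial\chi$ is a $C^\infty$ $(0,1)$-form supported in $\operatorname{supp}\bar\partial\chi\Subset M_a\setminus\operatorname{supp}\omega$, a set on which $v$ is holomorphic.

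The error $\eta_1$ is again a compactly supported smooth $(0,1)$-form on $M_a$, so the procedure repeats; what I expect to force convergence is that, before applying the cutoff at stage $k$, one first replaces the (holomorphic near the cutoff collar) slice-solution $v_k$ by a uniform approximant $h_k\in\mathcal O(M(H^\infty);X)$ supplied by Theorem \ref{runge} — the relevant compact being taken holomorphically convex via Proposition \ref{holconv} — so that $v_k-h_k$ is as small as we wish on $\operatorname{supp}\bar\partial\chi_k$, whence $\|\eta_{k+1}\|_{K;X}\le\tfrac12\|\eta_k\|_{K;X}$ while $\|w_k\|_\infty\le C\|\eta_k\|_{K;X}$. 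Then $L_{K;X}(\omega):=\sum_{k\ge0}w_k$ converges in $C(M(H^\infty);X)$ with norm $\le C(K)\|\omega\|_{K;X}$, and $\bar\partial\bigl(\sum_{k\le N}w_k\bigr)=\omega-\eta_{N+1}\to\omega$ in $C^\infty(M_a;X)$, which gives (a). Since $\bar\partial L_{K;X}(\omega)=\omega$ vanishes on the open set $M(H^\infty)\setminus K$ and $\Di\subset M_a$, the restriction of $L_{K;X}(\omega)$ to $(M(H^\infty)\setminus K)\cap\Di$ is holomorphic, which is (b). Linearity of $L_{K;X}$ is arranged by fixing the geometric data (cover, partition, cutoffs) independently of $\omega$ and realizing the approximation step by a bounded linear operator.

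The main obstacle I anticipate is exactly the last two paragraphs: producing a solution that is \emph{globally} bounded and continuous on the whole compact space $M(H^\infty)$ — in particular continuous up to the totally disconnected set $M_s$ of trivial Gleason parts, which is disjoint from every compact subset of $M_a$ — while simultaneously annihilating the cutoff-patching error with estimates uniform in $K$. Global boundedness genuinely uses the boundedness of the Cauchy transform on all of $\Di$, not merely local $\bar\partial$-solvability, and the iteration requires the holomorphic corrections $h_k$ to be controlled in sup-norm over $M(H^\infty)$, not only on a prescribed compact; reconciling these — possibly by replacing the iteration with an explicit global solution kernel built from the description of $M_a$ as a $\beta G$-bundle over the compact Riemann surface $S$ — is where the real work lies, with keeping the whole construction linear and the final bound dependent only on $K$ as the accompanying bookkeeping.
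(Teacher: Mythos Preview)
This theorem is not proved in the present paper at all: it is quoted verbatim as Theorem~3.5 of the companion preprint \cite{Br2} and used here as a black box. There is therefore no ``paper's own proof'' against which to compare your proposal.

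That said, two remarks on your sketch are worth making. First, the norm you propose (a fixed $C^m$-norm of the local representatives) is not the one the paper points to: inequality \eqref{equiv}, stated immediately after the theorem, shows that $\|\omega\|_{K;X}$ is dominated by $\sup_{z\in\Di\cap\operatorname{supp}\omega}\|F(z)\|_X\,(1-|z|)$, a Carleson-type quantity. This strongly suggests that the construction in \cite{Br2} passes through an $H^\infty$ $\bar\partial$-estimate on $\Di$ (of the kind used in corona-type problems) lifted to the $\beta G$-bundle picture, rather than through the bare Cauchy transform followed by an iteration; the Cauchy transform alone would naturally produce a bound by $\sup\|F\|$, not by $\sup\|F\|(1-|z|)$. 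Second, the step you yourself flag is a genuine gap: Theorem~\ref{runge} asserts density, not the existence of a bounded linear approximation operator with sup-norm control on all of $M(H^\infty)$, and without such an operator your scheme produces neither the linearity of $L_{K;X}$ nor the uniform contraction $\|\eta_{k+1}\|\le\tfrac12\|\eta_k\|$. Fixing the approximants ``by a bounded linear operator'' is precisely the nontrivial content, and your outline does not supply it.
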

Moreover, if $\omega|_\Di=F(z)d\bar z$ for $\omega\in \mathcal E_K^{0,1}(X)$, then
\begin{equation}\label{equiv}
\|\omega\|_{K;X}\le C_K\cdot\sup_{z\in \Di\cap {\rm supp}\,\omega}\left\{\|F(z)\|_X\cdot (1-|z|)\right\}
\end{equation}
for a constant $C_K\ge 1$ depending only on $K$, and if $H$ is a $C^\infty$ function on a neighbourhood of ${\rm supp}\,\omega$ with values in the space $L(X;Y)$ of bounded linear operators between complex Banach spaces $X$ and $Y$, then
\begin{equation}\label{oper}
\|H(\omega)\|_{K;Y}\le \left\{\sup_{z\in {\rm supp}\,\omega}\|H(z)\|_{L(X,Y)}\right\}\cdot\|\omega\|_K ;
\end{equation}
here $H(\omega)|_{\Di}:=H(z)(F(z))d\bar z$.
\sect{Cousin-type Lemma}
Let $X$ be a complex Banach space.
A continuous $X$-valued function on a compact subset $K\subset M(H^\infty)$ is called holomorphic if it is the restriction of a holomorphic function defined in an  open neighbourhood of $K$. The space of such functions is denoted by $\mathcal O(K;X)$. By $A(K;X)$ we denote the closure 
of $\mathcal O(K;X)$ in the Banach space $C(K;X)$ equipped with norm $\|f\|_{K;X}:=\sup_{z\in K}\|f(z)\|_X$. 

\begin{Th}\label{cousin}
Suppose that $U_1, U_2\subset M(H^\infty)$ are open such that $\bar U_1\cap \bar U_2\subset M_a$ and $W_i\Subset U_i$, $i=1,2$, are compact. We set $W:=W_1\cup W_2$. There exists a constant $C$ such that for every function $f\in A(\bar U_1\cap \bar U_2\cap W; X)$ there are functions $f_i\in A(\bar U_i\cap W; X)$ such that
\begin{itemize}
\item[(1)]
$$
f_1+f_2=f\quad\text{on}\quad \bar U_1\cap \bar U_2\cap W ;
$$
\item[(2)]
$$
\|f_i\|_{\bar U_i\cap W; X}\le C\|f\|_{\bar U_1\cap \bar U_2\cap W; X}\, ,\quad i=1,2.
$$
\end{itemize} 
\end{Th}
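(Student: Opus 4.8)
The plan is to solve the additive splitting problem via a $\bar\partial$-technique, exploiting the fact that the overlap $\bar U_1\cap\bar U_2$ sits inside $M_a$, where Theorem~\ref{dbar} provides a bounded solution operator for $\bar\partial$ with compactly supported data. First I would reduce to the case where $f$ extends holomorphically to an open neighbourhood of $\bar U_1\cap\bar U_2\cap W$: since $A(\bar U_1\cap\bar U_2\cap W;X)$ is by definition the closure of $\mathcal O(\bar U_1\cap\bar U_2\cap W;X)$, it suffices to prove the estimate (2) for $f\in\mathcal O$ and then pass to the limit, using that the operators $f\mapsto f_i$ will be constructed linearly and with a uniform bound. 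So assume $f\in\mathcal O(V;X)$ for some open $V\supset \bar U_1\cap\bar U_2\cap W$.

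Next I would introduce a $C^\infty$ cutoff. Since $\bar U_1\cap\bar U_2\subset M_a$ and $M_a=E(S,G_{in})$ is open, and $W$ is compact, the set $\bar U_1\cap\bar U_2\cap W$ is a compact subset of $M_a$; choose an open set $\Omega$ with $\bar U_1\cap\bar U_2\cap W\Subset\Omega\Subset V\cap M_a$. By Corollary~\ref{cutoff1} there is a nonnegative $C^\infty$ function $\rho$ on $E(S,\beta G)$ (hence, via the quotient map $q$, we may arrange $\rho$ to descend to $M(H^\infty)$ appropriately — more carefully, I would work on $E(S,\beta G)$ and push forward at the end) with $\rho\equiv 1$ near $\bar U_1\cap\bar U_2\cap W$ and $\mathrm{supp}\,\rho\subset\Omega\subset M_a$. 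The naive candidates are $f_1:=\rho f$ on $\bar U_1\cap W$ and $f_2:=(1-\rho)f$ on $\bar U_2\cap W$: these are continuous, satisfy $f_1+f_2=f$ on the overlap, and obey the sup-norm bound (2) with $C$ depending on $\sup|\rho|$. The defect is that $f_1,f_2$ are only $C^\infty$, not holomorphic — their $\bar\partial$ is $\omega:=\bar\partial\rho\wedge f$, an $X$-valued $C^\infty$ $(0,1)$-form, supported in $\Omega\cap(\text{where }d\rho\ne 0)$, a compact subset $K\Subset M_a$. Crucially $f_1$ and $f_2$ have the same $\bar\partial$ up to sign: $\bar\partial f_1=\omega=-\bar\partial f_2$ (since $\bar\partial f=0$ on $V$).

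Now apply Theorem~\ref{dbar}: there is a continuous linear operator $L_{K;X}$ with $u:=L_{K;X}(\omega)\in C(M(H^\infty);X)$, holomorphic off $K$, with $\bar\partial(u|_{M_a})=\omega$, and $\|u\|_{M(H^\infty);X}\le \mathrm{const}(K)\cdot\|\omega\|_{K;X}$. Set $\tilde f_1:=f_1-u=\rho f-u$ on $\bar U_1\cap W$ and $\tilde f_2:=f_2+u=(1-\rho)f+u$ on $\bar U_2\cap W$. Then $\bar\partial\tilde f_1=0$ and $\bar\partial\tilde f_2=0$ on the respective open sets, so $\tilde f_1\in\mathcal O(\bar U_1\cap W;X)$ and $\tilde f_2\in\mathcal O(\bar U_2\cap W;X)$ — here I use that a continuous $X$-valued function whose restriction to the dense open subset meeting $\Di$ is holomorphic, and which is $C^\infty$ with vanishing $\bar\partial$, is holomorphic in the sense of $\mathcal O$; one also needs $u$ itself to be holomorphic near the part of $\bar U_i\cap W$ lying outside $K$, which is exactly clause (b) of Theorem~\ref{dbar}, and inside $M_a$ the identity $\bar\partial(u|_{M_a})=\omega$ from clause (a) does the rest. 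Clearly $\tilde f_1+\tilde f_2=\rho f+(1-\rho)f=f$ on the overlap. For the norm estimate, combine $\|\rho f\|_{\bar U_1\cap W;X}\le\|\rho\|_\infty\|f\|$, the analogous bound for $(1-\rho)f$, and $\|u\|\le\mathrm{const}(K)\|\omega\|_{K;X}$; by the estimate~\eqref{equiv} for $\|\omega\|_{K;X}$ applied with $\omega|_\Di=(\partial\rho/\partial\bar z)(z)f(z)\,d\bar z$, one gets $\|\omega\|_{K;X}\le C_K\sup_{z\in\Di\cap K}\{|\partial\rho/\partial\bar z|(1-|z|)\}\cdot\|f\|_{\bar U_1\cap\bar U_2\cap W;X}$, and the supremum is a finite constant determined by $\rho$ (hence by $U_i,W_i$) alone, since $\rho$ and its derivatives are bounded on the fixed compact $K$. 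Collecting, $\|\tilde f_i\|_{\bar U_i\cap W;X}\le C\|f\|_{\bar U_1\cap\bar U_2\cap W;X}$ with $C$ independent of $f$ and of $X$.

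Finally, extend to all $f\in A(\bar U_1\cap\bar U_2\cap W;X)$: the maps $f\mapsto\tilde f_i$ just constructed are linear on $\mathcal O(\bar U_1\cap\bar U_2\cap W;X)$ — not quite canonically, because $\tilde f_i$ depends on the chosen neighbourhood $V$, but since the bound is uniform and independent of $V$, and since $\mathcal O$ functions on a fixed pair of neighbourhoods form a dense subspace, standard density and a diagonal argument give bounded linear extensions $T_i:A(\bar U_1\cap\bar U_2\cap W;X)\to A(\bar U_i\cap W;X)$ with $T_1f+T_2f=f$ and $\|T_if\|\le C\|f\|$, completing the proof. The main obstacle I anticipate is the bookkeeping around the $\bar\partial$-operator on $E(S,\beta G)$ versus $M(H^\infty)$: one must be careful that $\omega$ is genuinely a compactly-supported $C^\infty$ $(0,1)$-form in the sense of Section~2.3 (which requires the cutoff $\rho$ to be $C^\infty$ in the rectangular-coordinate sense, guaranteed by Corollary~\ref{cutoff1}), that its support is a compact subset of $M_a$ (guaranteed by $\bar U_1\cap\bar U_2\subset M_a$), and that the resulting $u=L_{K;X}(\omega)$ really is continuous and holomorphic on the correct pieces of $\bar U_i\cap W$ — all of which is packaged in Theorem~\ref{dbar}, but needs to be invoked precisely. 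The secondary subtlety is verifying that "$C^\infty$ with $\bar\partial=0$ plus continuity plus holomorphy on the $\Di$-part" upgrades to membership in $\mathcal O$; this follows because off $K$ the function equals $\rho f - u$ with both terms already holomorphic near those points, and on $M_a$ one uses the local product structure $R_{V,H}\cong V\times\beta G$ in which $\bar\partial=0$ means holomorphy in the $\Di$-variable fibrewise, matching the definition of $\mathcal O$.
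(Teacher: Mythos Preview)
Your overall strategy --- partition of unity plus $\bar\partial$-correction via Theorem~\ref{dbar} --- is the paper's strategy too, but your cutoff is set up in the wrong direction, and this creates a genuine gap. You take $\rho\equiv 1$ near the overlap $\bar U_1\cap\bar U_2\cap W$ with $\mathrm{supp}\,\rho\subset\Omega\subset V$, and define $f_2:=(1-\rho)f$ on $\bar U_2\cap W$. But $f$ is only defined on $V$ (a small neighbourhood of the overlap), and at any point $x\in(\bar U_2\cap W)\setminus V$ you have $\rho(x)=0$, hence $(1-\rho)(x)f(x)=f(x)$, which is undefined. Only $\rho f$ extends by zero off $V$; $(1-\rho)f$ does not. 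So $f_2$ is not a function on $\bar U_2\cap W$, and the identity $\bar\partial f_1=-\bar\partial f_2$ on which the correction step rests cannot even be formulated. The fix is to use a partition of unity subordinate to $\{U_1,U_2\}$ rather than a bump on the overlap: choose $\psi_1,\psi_2\ge 0$ with $\mathrm{supp}\,\psi_i\subset U_i$ and $\psi_1+\psi_2=1$ near $W$, then set $f_1':=\psi_2 f$ and $f_2':=-\psi_1 f$; these do extend by zero to the correct sides, satisfy $f_1'-f_2'=f$ on the overlap, and have $\bar\partial f_1'=\bar\partial f_2'$. This is exactly what the paper does.

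There is a second, subtler gap in your uniformity claim. Your cutoff $\rho$ is built from $\Omega\Subset V$, and $V$ depends on the particular $f\in\mathcal O(\bar U_1\cap\bar U_2\cap W;X)$; consequently the compact $K=\mathrm{supp}\,\omega$, the constant $C_K$ in \eqref{equiv}, and the quantity $\sup_{\Di\cap K}|\partial\rho/\partial\bar z|\,(1-|z|)$ all depend on $f$, so the bound you derive is not uniform. Even with the correct fixed partition $\psi_1,\psi_2$, the auxiliary neighbourhoods on which one glues must shrink with the domain of $f$, and $\|f\|$ on that slightly larger set must be controlled. The paper resolves this by running the same construction for the constant function $I\equiv 1$ to produce a \emph{fixed} form $\eta_I$ supported in a fixed compact $N\Subset M_a$, then writing $\eta_f$ as a bounded $X$-valued multiple of $\eta_I$ and invoking estimate~\eqref{oper} (not \eqref{equiv}) to get $\|\eta_f\|_{N;X}\le 2\|\eta_I\|_{N;\Co}\,\|f\|$ with a constant independent of $f$. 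Finally, since the construction is not linear in $f$, the passage from $\mathcal O$ to $A$ is done in the paper by a telescoping-series argument rather than by extending a bounded linear operator as you suggest.
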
 
\begin{proof}
We consider $M_a$ as an open subset of $E(S,\beta G)$. Let $\rho_i$, $i=1,2$, be nonnegative $C^\infty$ functions on $E(S,\beta G)$ such that
$\rho_i|_{W_i}>0$ and ${\rm supp}\,\rho_i\Subset U_i$, cf. Corollary \ref{cutoff1}. Let $Z:=\{x\in E(S,\beta G)\, ;\, \rho_1(x)=\rho_2(x)=0\}$. By definition, $Z\cap W=\emptyset$. Therefore there is an open neighbourhood $U$ of $W$ such that $\bar U\cap Z=\emptyset$. Then we define nonnegative $C^\infty$ functions $\psi_i$ on $U$ by the formulas
\[
\psi_i:=\frac{\rho_i}{\rho_1+\rho_2},\quad i=1,2;
\]
here ${\rm supp}\,\psi_i\subset U_i\cap U$. 

Without loss of generality we may assume that $\emptyset\ne\bar U_1\cap \bar U_2\cap W\ne W$. (For otherwise, the statement of the theorem is trivial.)
We fix an open neighbourhood $N\Subset M_a$ of $\bar U_1\cap \bar U_2\cap W$.

By definition, a function $f\in \mathcal O(\bar U_1\cap \bar U_2\cap W; X)$ admits a holomorphic extension (denoted also by $f$) to an open set $O_f\Subset N$ such that
$O_f\supset \bar U_1\cap \bar U_2\cap W$ and $\|f\|_{\bar O_f ;X}\le 2\|f\|_{\bar U_1\cap \bar U_2\cap W; X}$. Without loss of generality we may assume that $O_f\subset N\cap U$. Next, we can find open sets $U_i'\supset\bar U_i$, $W_i'\Subset U_i'\cap U$ such that $W_i\subset W_i'$, $i=1,2$, and $W'\ne U_1'\cap U_2'\cap W'\Subset O_f$; here $W':=W_1'\cup W_2'$. We set
\[
f_1':=\psi_2 f\quad\text{on}\quad U_1'\cap W',\qquad f_2':=-\psi_1 f\quad\text{on}\quad U_2'\cap W'.
\]
By the definition these functions are well defined. We check it, say, for $f_1'$. Indeed, by our construction $f_1'$ is defined in a neighbourhood of the closure of $U_1'\cap U_2'\cap W'$. Since 
\[
(U_1'\cap W')\setminus (U_1'\cap U_2'\cap W')=(U_1'\setminus U_2')\cap W'\subset (U_1'\cap U)\setminus (U_2\cap U),
\]
$\psi_2=0$ on $(U_1'\cap W')\setminus (U_1'\cap U_2'\cap W')$. Hence, $f_1'=0$ on $(U_1'\cap W')\setminus (U_1'\cap U_2'\cap W')$ as well. Similarly, one proves that $f_i'$ is of class $C^\infty$ on $U_i'\cap W'$, $i=1,2$. Thus,
\[
\omega_f:=\bar\partial f_i'\qquad \text{on}\quad U_i'\cap W',\quad i=1,2,
\]
is an $X$-valued $C^\infty$ $(0,1)$ form on $W'$ (because $f_1'-f_2'=f$ on $U_1'\cap U_2'\cap W'$).
By the definition ${\rm supp}\,\omega_f\subset O_f$. Let $\rho_f$ be a $C^\infty$ function on $E(S,\beta G)$  with values in $[0,1]$ equals $1$ in an open neighbourhood of $W$ with support in $W'$, cf. Corollary \ref{cutoff1}. Then  $\eta_f:=\rho_f\omega_f$ is an $X$-valued $C^\infty$ $(0,1)$ form on $E(S,\beta G)$ with support in $O_f\Subset N$. 

Let us apply this construction to $X:=\Co$ and $f(z):=I$, $z\in \bar U_1\cap \bar U_2\cap W$, where $I:=1$ is the unit of $\Co$. In this case $\rho_I$ and $\omega_I$ are some specific function and form depending only on the choice of the above sets for $I$. Therefore for our space $X$ and function $f$ we may choose $\rho_f$ so that $\rho_I=1$ on ${\rm supp}\, \rho_f$. Since ${\rm supp}\,\eta_f$ is a compact subset of $O_f$, there is a $C^\infty$ function $h_f$ on $E(S,\beta G)$ with values in $[0,1]$ equals $1$ in an open neighbourhood of ${\rm supp}\,\eta_f$ with support in $O_f$. In particular, we obtain
\[
\eta_f=h_f\cdot \chi_{O_f}\cdot f|_{O_f}\cdot\rho_f\cdot\eta_I,
\]
where $\chi_{O_f}$ is the characteristic function of $O_f$. The $X$-valued function $h_f\cdot \chi_{O_f}\cdot f|_{O_f}\cdot\rho_f$ is of class $C^\infty$ on $E(S,\beta G)$. Therefore applying estimate \eqref{oper} we get
$$
\begin{array}{l}
\displaystyle
\|\eta_f\|_{N;X}\le \left\{\sup_{z\in O_f}\|h_f(z)\cdot \chi_{O_f}(z)\cdot f(z)\cdot\rho_f(z)\|_X\right\}\cdot\|\eta_I\|_{N;\Co}\\
\\
\qquad\qquad\le (2\|\eta_I\|_{N;\Co})\|f\|_{\bar U_1\cap \bar U_2\cap W; X}.
\end{array}
$$
Now by Theorem \ref{dbar} we find an $X$-valued continuous function $g$ on $M(H^\infty)$ such that $\bar\partial g=\eta_f$ on $M_a$ and 
$\|g\|_{M(H^\infty);X}\le A_N\|\eta_f\|_{N;X}\le A'\|f\|_{\bar U_1\cap \bar U_2\cap W; X}$ with $A':= 2A_N\|\eta_I\|_{N;\Co}$. 

Finally, since $\eta_f=\omega_f$ in an open neighbourhood of $W$, defining
$$
f_i:=(-1)^{i-1}(f_i'-g)\quad\text{on}\quad \bar U_i\cap W
$$
we obtain $f_1+f_2=f $ on $\bar U_1\cap\bar U_2\cap W$ and $\|f_i\|_{\bar U_i\cap W; X}\le C\|f\|_{\bar U_1\cap \bar U_2\cap W; X}$, $i=1,2$, for some $C$ independent of $f$. By our construction, $f_i\in\mathcal O(\bar U_i\cap W;X)$, $i=1,2$.

This proves the result for functions from $\mathcal O(\bar U_1\cap \bar U_2\cap W; X)$.

If now, $f\in A(\bar U_1\cap \bar U_2\cap W; X)\setminus\mathcal O(\bar U_1\cap \bar U_2\cap W; X)$, choose a sequence $\{g_n\}_{n\in\N_+}\subset \mathcal O(\bar U_1\cap \bar U_2\cap W; X)$, $g_0:=0$, converging to $f$ and such that 
\[
\|g_{n}-g_{n-1}\|_{\bar U_1\cap \bar U_2\cap W; X}\le \frac{\|f\|_{\bar U_1\cap \bar U_2\cap W; X}}{2^{n-1}}\quad\text{for all}\quad n\in\N.
\]
We set $h_{n}:=g_{n}-g_{n-1}$, $n\in\N$. By the above result, there exist functions $h_{in}\in A( \bar U_i\cap W; X)$ such that $h_{1n}+h_{2n}=h_n$ on $\bar U_1\cap \bar U_2\cap W$ and $\|h_{in}\|_{\bar U_i\cap W; X}\le C\|h_n\|_{\bar U_1\cap \bar U_2\cap W; X}\le\frac{C}{2^{n-1}}\|f\|_{\bar U_1\cap \bar U_2\cap W; X}$, $i=1,2$. We set
\[
f_i:=\sum_{n=1}^\infty h_{in},\quad\text{on}\quad \bar U_i\cap W,\quad i=1,2.
\]
The above estimates imply that the series converge to elements of $A(\bar U_i\cap W;X)$, $i=1,2$. Moreover,
\[
\|f_i\|_{\bar U_i\cap W; X}\le 3C\|f\|_{\bar U_1\cap \bar U_2\cap W; X},\quad i=1,2,
\]
and 
\[
f_1+f_2=\sum_{n=1}^\infty (h_{1n}+h_{2n})=\sum_{n=1}^\infty (g_{n}-g_{n-1})=f\quad\text{on}\quad \bar U_1\cap \bar U_2\cap W.
\]

The proof of the theorem is complete.
\end{proof}
\sect{Cartan-type Lemma}
For the basic facts of the the theory of Banach Lie groups see, e.g., \cite{Mai}.

Let $B^{-1}$ be the group of invertible elements of a complex unital Banach algebra $B$. Let $\psi: G\to B_0^{-1}$ be a regular covering of the connected component $B_0^{-1}$ of $B^{-1}$ containing the unit $1_{B^{-1}}$ of $B^{-1}$. Then $G$ is complex Banach Lie group and $\psi$ is a surjective morphism of complex Lie groups whose kernel is a discrete central subgroup of $G$, see, e.g., \cite{Po}.
By $\exp_G:\mathcal L_G\to G$, $\exp_{B^{-1}}: \mathcal L_{B^{-1}}\to B^{-1}$ we denote the corresponding exponential maps of Lie algebras of $G$ and $B^{-1}$. Then differential $d\psi(1_G):\mathcal L_G\to \mathcal L_{B^{-1}}$ at the unit $1_G$ of $G$ is an isomorphism of Lie algebras and 
\begin{equation}\label{comm}
\psi\circ\exp_{G}=\exp_{B^{-1}}\circ\, d\psi(1_G).
\end{equation}

A continuous map $f: W\to G$, where $W\subset M(H^\infty)$ is open, is called holomorphic if $f|_{\Di\,\cap W}:\Di\cap W\to G$ is a holomorphic map of complex Banach manifolds. For a compact set $K\subset M(H^\infty)$ a map $K\to G$ is called holomorphic if it is a restriction of a holomorphic map into $G$ defined on an open neighbourhood of $K$. The space of such maps is denoted by $\mathcal O(G;X)$. We say that a continuous map $f: K\to G$ belongs to the space $A(K;G)$ if $\psi(f|_S)\in A(S;B)$ for every compact subset $S\subset K$. The space $A(K;G)$ has a natural group structure induced by the product on $G$. Clearly, $\mathcal O(K;G)\subset A(K;G)$ (because $\psi$ is a holomorphic map).
The unit of $A(K;G)$, i.e., the map $z\mapsto 1_{G}$, $z\in K$, will be denoted by $I$. 

A map $\gamma : [0,1]\to A(K; G)$ is called a {\em path} if the induced map $[0,1]\times K\to G$, $(t, x)\mapsto \gamma(t)(x)$, is continuous. The set of all maps in $A(K; G)$ that can be joined by paths (in $A(K; G)$) with $I$ will be called the {\em connected component} of $I$.

We retain notation of Theorem \ref{cousin}.
\begin{Th}\label{cartan}
Assume that $W_1$ is holomorphically convex and $W_1\cap W_2\ne\emptyset$.
Let $F\in A(\bar U_1\cap \bar U_2;G)$ belong to the connected component of $I$. Then there exist 
$F_i\in A(W_i; G)$ such that $F_1\cdot F_2=F$ on $W_1\cap W_2$.
\end{Th}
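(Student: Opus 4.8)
The plan is to follow the classical Cartan splitting argument. First I would dispose of the case in which $F$ is uniformly close to the identity, by a quadratically convergent iteration resting on the additive splitting of Theorem~\ref{cousin}; then I would deduce the general case by connecting $F$ to $I$ by a path and reassembling local splittings along it.

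\textit{The near-identity case.} Assume $\|\psi(F)-1_{B^{-1}}\|$ is so small (uniformly on $\bar U_1\cap\bar U_2$) that $F=\exp_G(f)$ for some $f\in A(\bar U_1\cap\bar U_2;\mathcal L_G)$ with $\|f\|$ small; this is legitimate because $\psi$ is a covering and a local homeomorphism near $1_G$, together with \eqref{comm}. I apply Theorem~\ref{cousin} with $X=\mathcal L_G$ to $f|_{\bar U_1\cap\bar U_2\cap W}$, obtaining $f_i\in A(\bar U_i\cap W;\mathcal L_G)$ with $f_1+f_2=f$ and $\|f_i\|\le C\|f\|$, and set
\[
F^{(1)}:=\exp_G(-f_1)\,F\,\exp_G(-f_2)\qquad\text{on }\ \bar U_1\cap\bar U_2\cap W .
\]
The point of the \emph{two-sided} correction is that $F^{(1)}$ again lives on $\bar U_1\cap\bar U_2\cap W$, exactly the domain Theorem~\ref{cousin} accepts, so the iteration repeats with no loss of domain; and by Baker--Campbell--Hausdorff $\exp(-f_1)\exp(f_1+f_2)\exp(-f_2)=\exp(g)$ with $\|g\|\le C'\|f\|^2$, so $F^{(1)}=\exp_G(g)$ with $g\in A(\bar U_1\cap\bar U_2\cap W;\mathcal L_G)$ and $\|g\|=O(\|f\|^2)$. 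Iterating produces $f_i^{(k)}\in A(\bar U_i\cap W;\mathcal L_G)$ with $\|f_i^{(k)}\|=O(\|f\|^{2^k})$, whose ordered infinite products $F_1=\exp_G(f_1^{(0)})\exp_G(f_1^{(1)})\cdots$ and $F_2=\cdots\exp_G(f_2^{(1)})\exp_G(f_2^{(0)})$ converge in $A(\bar U_1\cap W;G)$ and $A(\bar U_2\cap W;G)$ (using that $\psi\circ\exp_G$ is entire, that $A(\cdot;B)$ is closed under products and uniform limits, and that $\psi(F_i)$ takes values in $B^{-1}$) and satisfy $F=F_1F_2$ on $\bar U_1\cap\bar U_2\cap W\supset W_1\cap W_2$. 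Restriction to $W_i$ gives the splitting, with the bound $\|\psi(F_i)^{\pm1}\|_{W_i}\le 1+O(\|\psi(F)-1_{B^{-1}}\|)$, which I will need below. The same works for any open $V_1,V_2$ with $\bar V_1\cap\bar V_2\subset M_a$ and compact $\widetilde W_i\Subset V_i$, since Theorem~\ref{cousin} applies to every such pair.

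\textit{The general case.} I take a path $\gamma\colon[0,1]\to A(\bar U_1\cap\bar U_2;G)$ with $\gamma(0)=I$, $\gamma(1)=F$. Since $(t,x)\mapsto\gamma(t)(x)$ is continuous on the compact $[0,1]\times(\bar U_1\cap\bar U_2)$ it is uniformly continuous, so for a fine enough partition $0=t_0<\dots<t_N=1$ the maps $H_j:=\gamma(t_{j-1})^{-1}\gamma(t_j)$ are uniformly as close to $I$ as desired; since $\psi$ is a covering, each $H_j=\exp_G(h_j)$ with $\|h_j\|$ small, and $F=H_1\cdots H_N$. I then induct on $N$ (the case $N=1$ being the near-identity result). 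Suppose $H_1\cdots H_{N-1}=P_1P_2$ on $W_1\cap W_2$ with $P_i\in A(W_i;G)$; then
\[
P_1P_2H_N=P_1\,\bigl(P_2H_NP_2^{-1}\bigr)\,P_2\qquad\text{on }\ W_1\cap W_2 .
\]
Conjugation by $P_2$ is an automorphism of $G$ bounded in terms of $\|\psi(P_2)^{\pm1}\|$, so $P_2H_NP_2^{-1}=\exp_G(\,\cdot\,)$ is again near $I$, with closeness $\le\|\psi(P_2)\|\,\|\psi(P_2)^{-1}\|\cdot\|\psi(H_N)-1_{B^{-1}}\|$, on the set where it is defined; applying the near-identity case to it (for a suitable sub-configuration of open subsets of $M_a$, after enlarging the compacts once at the outset or using a finite decreasing family of them) yields $P_2H_NP_2^{-1}=Q_1Q_2$, whence $F=(P_1Q_1)(Q_2P_2)$ is a splitting of the desired form. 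The norm bound from the near-identity case turns the induction into a recursion for $\|\psi(P_i)^{\pm1}\|$; choosing the partition $\{t_j\}$ adaptively (and, where intermediate corrections must be globalized, invoking the holomorphic convexity of $W_1$ and the Runge-type Theorem~\ref{runge}) keeps these bounds finite along the path, so after finitely many steps I reach a splitting of $F$, which I restrict to $W_1,W_2$.

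\textit{Main obstacle.} The real difficulty throughout is the non-commutativity of $G$. It prevents obtaining the splitting directly from one application of Theorem~\ref{cousin}: the Baker--Campbell--Hausdorff remainder forces the quadratically convergent iteration, and in the reduction it forces the conjugation device, whose norm growth along the connecting path must be tamed. The auxiliary issue is bookkeeping the domains so that Theorem~\ref{cousin} is only ever invoked on subsets of $M_a$ of the permitted form; the two-sided correction $\exp(-f_1)F\exp(-f_2)$ is exactly what makes the iteration close without shrinking domains.
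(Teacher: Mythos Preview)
Your near-identity case is essentially the paper's Proposition~\ref{small}, which defers to the classical Cartan iteration built on Theorem~\ref{cousin}; that part is fine. The overall architecture (near-identity splitting, then a path argument) is also the paper's. The gap is in how your induction step handles the non-commutativity.

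When you write $P_1P_2H_N=P_1(P_2H_NP_2^{-1})P_2$ and propose to split the conjugate, the closeness of $P_2H_NP_2^{-1}$ to $I$ is only $\le\|\psi(P_2)\|\,\|\psi(P_2)^{-1}\|\cdot\|\psi(H_N)-1\|$. To keep this below the near-identity threshold you must refine the partition, which increases the number of steps; but each step multiplies $\|\psi(P_2)^{\pm1}\|$ by a factor $1+O(\delta)$. Running the obvious recursion shows the accumulated norm can blow up before you finish traversing the path, so ``choosing the partition adaptively'' does not close the argument. On top of this, $P_2H_NP_2^{-1}$ is defined only on $W_2\cap\bar U_1$, so each application of the near-identity case forces a strict shrinking on the $W_2$-side; with an a~priori unbounded number of steps you cannot pre-arrange a finite nest of compacts.

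The paper's device avoids both difficulties, and it is precisely the point you relegate to a parenthesis. The number $k$ of path steps is fixed \emph{once} by the initial threshold $\varepsilon_1$, and a fixed nest $W^1\supset\cdots\supset W^k\supset W$ is chosen accordingly. At stage $j$, with $F^{j-1}=F_{1,j-1}F_{2,j-1}$ already in hand and $\gamma(t_{j-1})^{-1}\gamma(t_j)=F_1^{j-1}F_2^{j-1}$ a near-identity splitting, one uses holomorphic convexity of $\bar U_{1,j}$ and the Runge-type Theorem~\ref{runge} to approximate the \emph{small} factor $F_1^{j-1}$ by a global $F_\varepsilon\in\mathcal O(M(H^\infty);G)$, then writes
\[
F_{2,j-1}\,F_1^{j-1}\,F_2^{j-1}
=\bigl[F_{2,j-1},\,F_1^{j-1}F_\varepsilon^{-1}\bigr]\cdot(F_1^{j-1}F_\varepsilon^{-1})\cdot F_{2,j-1}\,F_\varepsilon\,F_2^{j-1}.
\]
Since $\varepsilon$ is chosen \emph{after} $F_{2,j-1}$ is known, the commutator and $F_1^{j-1}F_\varepsilon^{-1}$ are as close to $I$ as you like irrespective of the size of $F_{2,j-1}$; one then applies the near-identity case to their product, and the global $F_\varepsilon$ is absorbed into the $U_2$-side. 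No recursive norm control is needed, and the number of domain-shrinkings is the fixed $k$.

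So the missing idea is: do not conjugate by the large accumulated factor; instead use Runge to globalize the small $U_1$-side increment so it can be commuted past $F_{2,j-1}$ at arbitrarily small cost. That use of Theorem~\ref{runge} is the crux of the proof, not a technical aside.
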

\begin{proof}
Since $W_1$ is holomorphically convex, according to Proposition \ref{holconv}, there exists a sequence of open sets $U_{1,i}\Subset U_1$ containing $W_1$ such that  $U_{1,i+1}\Subset U_{1,i}$ and $\bar U_{1, i}$ is holomorphically convex for all $i\in\mathbb N$. Let us choose a sequence of open 
sets $U_{2,i}\Subset U_2$ containing $W_2$ such that $U_{2,i+1}\Subset U_{2,i}$ for all $i\in\N$.
Then Theorem \ref{cousin} is also valid with $U_j$ replaced by $U_{j,i}$, $j=1,2$,  $W:=W_1\cup W_2$ replaced by $W^i:=\bar U_{1,i+1}\cup\bar U_{2,i+1}$ and $C$ replaced by some $C_i$.

\begin{Proposition}\label{small}
There exists a positive number $\varepsilon_i$ such that for every function $F\in A(\bar U_{1,i}\cap\bar U_{2,i}\cap W^i; G)$ satisfying $\|\psi\circ F-\psi\circ I\|_{\bar U_{1,i}\cap \bar U_{2,i}\cap  W^i; B}<\varepsilon_i$ there exist $F_j\in A(\bar U_{j,i}\cap W^i; G)$, $j=1,2$, such that 
\begin{itemize}
\item[(1)]
$F_{1}\cdot F_{2}=F$ on $\bar U_{1,i}\cap\bar U_{2,i}\cap W^i$ and 
\item[(2)]
$\|\psi\circ F_j-\psi\circ I\|_{\bar U_{j,i}\cap W^i; B}\le 4C_i\|\psi\circ F-\psi\circ I\|_{\bar U_{1,i}\cap\bar U_{2,i}\cap W^i; B}$, $j=1,2$.
\end{itemize}
Moreover, $\psi$ has an inverse $\psi^{-1}$ on the ball $\{v\in B\, ;\, \|v-1_{B^{-1}}\|_B<4C_i\varepsilon_i\}\subset B$ such that $\psi^{-1}(1_{B^{-1}})=1_G$.
\end{Proposition}
\begin{proof}
Since $G$ is locally isomorphic to $B^{-1}$, cf. \eqref{comm}, it suffices to prove the result for $G:=B^{-1}$. 
In this case the proof repeats literally the classical proof of Cartan's lemma with the classical Cousin lemma replaced by Theorem \ref{cousin} and with matrix norm replaced by norm on $B$, see, e.g., \cite[Ch.~III.1]{GR}.
\end{proof}
Further, since $F$ in the statement of the theorem belongs to the connected component $\mathcal C_0$ containing $I$ of the group $A(\bar U_1\cap \bar U_2; G)$, there exists a path $\gamma : [0,1]\to \mathcal C_0$ such that $\gamma(0)=I$ and $\gamma(1)=F$. From the continuity of $\gamma$ follows that there is a partition $0=t_0<t_1<\cdots <t_k=1$ of $[0,1]$ such that 
\begin{equation}\label{eq4.1}
\|\psi\circ(\gamma(t_i)^{-1}\cdot\gamma(t_{i+1}))-\psi\circ I\|_{\bar U_{1,1}\cap \bar U_{2,1}\cap W^1; B}<\varepsilon_1\quad\text{ for all}\quad i.
\end{equation}
Applying Proposition \ref{small} to each $\gamma(t_i)^{-1}\cdot\gamma(t_{i+1})$ we obtain
\begin{equation}\label{eq4.2}
\gamma(t_i)^{-1}\cdot\gamma(t_{i+1})=F_{1}^i\cdot F_{2}^i\quad\text{on}\quad \bar U_{1,1}\cap \bar U_{2,1}\cap W^1
\end{equation}
for some $F_{\ell}^i\in A(\bar U_{\ell,1}\cap W^1; G)$, $\ell=1,2$.

Next, we have
\[
F=\prod_{i=0}^{k-1}(\gamma(t_i)^{-1}\cdot\gamma(t_{i+1})).
\]

We will use induction on $j$ for $1\le j\le k$. The induction hypothesis is {\em if
\[
F^j:=\prod_{i=0}^{j-1}(\gamma(t_i)^{-1}\cdot\gamma(t_{i+1})),
\]
then 
\[
F^j=F_{1,j}\cdot F_{2,j}\quad\text{on}\quad \bar U_{1,j}\cap \bar U_{2,j}\cap W^j
\]
for some $F_{\ell,j}\in A(\bar U_{\ell,j}\cap W^j; G)$, $\ell=1,2$. }

By \eqref{eq4.2} the statement is true for $j=1$. Assuming that it is true for $j-1$ let us prove it for $j$.

By the induction hypothesis we have
\[
F^j=F^{j-1}\cdot \gamma(t_j)^{-1}\cdot\gamma(t_{j+1})= F_{1,j-1}\cdot F_{2,j-1}\cdot F_{1}^{j-1}\cdot F_{2}^{j-1}\quad\text{on}\quad
\bar U_{1,j-1}\cap\bar U_{2,j-1}\cap W^{j-1}.
\]
Since $\bar U_{1,j}\subset \bar U_{1,j-1}\cap W^{j-1}$ is holomorphically convex, by Theorem \ref{runge}, assuming that $\varepsilon_1$ is sufficiently small such that $\exp_{B^{-1}}^{-1}((\psi\circ F_{1}^{j-1}))$ is well-defined on $\bar U_{1,j}$ (see Proposition \ref{small}\,(2)), we approximate
$\exp_{B^{-1}}^{-1}((\psi\circ F_{1}^{j-1}))$ uniformly on $\bar U_{1,j}$ by a sequence of functions from $\mathcal O(M(H^\infty);\mathcal L_{B^{-1}})$. Applying to the functions of this sequence map $\exp_{G}\circ (d\psi(1_G))^{-1}$, see \eqref{comm}, we obtain that for every $\varepsilon>0$ there exists
$F_{\varepsilon}\in \mathcal O(M(H^\infty);G)$ such that
\begin{equation}\label{eq4.3}
\|\psi(F_{1}^{j-1}\cdot F_{\varepsilon}^{-1})-\psi(I)\|_{\bar U_{1,j};B}<\varepsilon .
\end{equation}
We write
\[
F_{2,j-1}\cdot F_{1}^{j-1}\cdot F_{2}^{j-1}=[F_{2,j-1},F_{1}^{j-1}\cdot F_{\varepsilon}^{-1}]\cdot (F_{1}^{j-1}\cdot F_{\varepsilon}^{-1})\cdot F_{2,j-1}\cdot F_{\varepsilon}\cdot  F_{2}^{j-1},
\]
where $[A_1,A_2]:=A_1\cdot A_2\cdot A_1^{-1}\cdot A_2^{-1}$.

According to \eqref{eq4.3} for a sufficiently small $\varepsilon$ we have
\[
\|\psi([F_{2,j-1},F_{1}^{j-1}\cdot F_{\varepsilon}^{-1}]\cdot (F_{1}^{j-1}\cdot F_{\varepsilon}^{-1}))-\psi(I)\|_{\bar U_{1,j}\cap \bar U_{2,j}\cap W^j ; B}<\varepsilon_j .
\]
Thus by Proposition \ref{small}, there exist $H_{\ell}\in A(\bar U_{\ell,j}\cap W^j; G)$, $\ell=1,2$, such that 
\[
[F_{2,j-1},F_{1}^{j-1}\cdot F_{\varepsilon}^{-1}]\cdot (F_{1}^{j-1}\cdot F_{\varepsilon}^{-1})=H_1\cdot H_2\quad\text{on}\quad \bar U_{1,j}\cap \bar U_{2,j}\cap W^j.
\]
In particular, we obtain
\[
F^j=F_{1,j-1}\cdot F_{2,j-1}\cdot F_{1}^{j-1}\cdot F_{2}^{j-1}=F_{1,j-1}\cdot H_1\cdot H_2\cdot F_{2,j-1}\cdot F_{\varepsilon}\cdot  F_{2}^{j-1}\quad\text{on}\quad \bar U_{1,j}\cap \bar U_{2,j}\cap W^j.
\]
We set
\[
F_{1,j}:=F_{1,j-1}\cdot H_1,\quad\text{on}\quad \bar U_{1,j}\cap W^j,\qquad F_{2,j}:=H_2\cdot F_{2,j-1}\cdot F_{\varepsilon}\cdot  F_{2}^{j-1}\quad\text{on}\quad \bar U_{2,j}\cap W^j.
\]
This completes the proof of the induction step.

From this result for $j:=k-1$ and the fact that $W_\ell\subset \bar U_{\ell,k+1}$, $\ell=1,2$, we obtain that there exist 
$F_i\in A(W_i; G)$ such that $F_1\cdot F_2=F$ on $W_1\cap W_2$.
\end{proof}
\sect{Maximal Ideal Space of Algebra $A(K)$}
We set $A(K):=A(K;\Co)$, where $K\subset M_a$ is compact. By definition, $A(K)$ is a complex commutative unital Banach algebra with norm
$\|f\|_\infty:=\sup_{z\in K}|f(z)|$.
\begin{Th}\label{ideal}
The maximal ideal space of $A(K)$ is $K$.
\end{Th}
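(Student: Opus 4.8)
The plan is to show that the evaluation map $\mathrm{ev}: K \to M(A(K))$, $z \mapsto [\,f \mapsto f(z)\,]$, is a homeomorphism. Since $K$ is compact and $M(A(K))$ is Hausdorff, it suffices to prove that $\mathrm{ev}$ is injective and surjective; continuity is immediate from the definition of the Gelfand topology, and a continuous bijection from a compact space to a Hausdorff space is automatically a homeomorphism. Injectivity is the easy direction: if $z_1 \neq z_2$ in $K \subset M_a = M(H^\infty)$, then since $M(H^\infty)$ is the maximal ideal space of $H^\infty$ there is $h \in H^\infty$ with $\hat h(z_1) \neq \hat h(z_2)$, and the restriction $\hat h|_K$ lies in $\mathcal O(K) \subset A(K)$ and separates $z_1$ from $z_2$.

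The substance of the theorem is surjectivity: every nonzero homomorphism $\phi: A(K) \to \Co$ is evaluation at some point of $K$. First I would record that $\mathcal O(K)$ is dense in $A(K)$ by definition, so $\phi$ is determined by its restriction to $\mathcal O(K)$; and any $g \in \mathcal O(K)$ extends holomorphically to an open neighbourhood of $K$ in $M(H^\infty)$, which by Theorem \ref{runge} can be approximated uniformly on $K$ by elements of $\mathcal O(M(H^\infty))$, hence by elements of $H^\infty$ (via the Gelfand transform). Consequently $\phi$ is determined by the composite $H^\infty \to A(K) \xrightarrow{\phi} \Co$. This composite is a nonzero homomorphism of $H^\infty$ (it sends $1$ to $1$), hence equals $\hat f \mapsto \hat f(x_0)$ for some $x_0 \in M(H^\infty)$. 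The remaining task is to show $x_0 \in K$. Suppose not: $x_0 \notin K$. Since $K \subset M_a$ is compact, $K$ is holomorphically convex (every compact subset of $M(H^\infty)$ that I need here can be enlarged to one with holomorphically convex closure via Proposition \ref{holconv}, but in fact a cleaner route is available: by Hoffman's theory, points of $M_a$ lie on analytic disks and are separated from compacta by $H^\infty$-functions — more precisely, I would invoke that $K$, being a compact subset of $M_a$, is contained in a finite union of "rectangular" sets $R_{U,H}$, and use an interpolating Blaschke product $B$ with $|\hat B|$ small on $K$ but $\hat B(x_0)$ bounded away from $0$). Either way, one obtains $f \in H^\infty$ with $\max_K |\hat f| < |\hat f(x_0)|$; normalizing, we may assume $\|\hat f\|_K < 1 \le |\hat f(x_0)| = |\phi(\hat f|_K)|$. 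But $\|\phi\| \le 1$ on the Banach algebra $A(K)$ (as a homomorphism to $\Co$), so $|\phi(\hat f|_K)| \le \|\hat f|_K\|_{K} < 1$, a contradiction. Hence $x_0 \in K$ and $\phi = \mathrm{ev}_{x_0}$.

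The main obstacle is the last step — producing, for a point $x_0 \in M(H^\infty) \setminus K$ with $K \subset M_a$ compact, a function in $H^\infty$ whose Gelfand transform is small on $K$ but large at $x_0$, i.e. establishing that such $K$ is holomorphically convex in the sense defined in the paper. This is where the structure of $M_a$ from Section 2.2.2 does the work: cover $K$ by finitely many basic open sets of the form $\{x \in M_a : |\hat B_j(x)| < \varepsilon_j\}$ for interpolating Blaschke products $B_j$; if $x_0$ lies outside this cover then for each $j$ either $|\hat B_j(x_0)| \ge \varepsilon_j$, and a suitable product or power of the $\hat B_j$ (or a finite combination witnessing that $x_0$ is not in the $M_a$-closure, handling also the case $x_0 \in M_s$) separates $x_0$ from $K$. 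I would also need to check the harmless normalization that a unital Banach-algebra homomorphism to $\Co$ has norm $\le 1$, which is standard. Once holomorphic convexity of compact subsets of $M_a$ is in hand, the contradiction argument closes the proof.
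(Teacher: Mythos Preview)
Your approach has a genuine gap: it rests on the claim that an arbitrary compact $K\subset M_a$ is holomorphically convex in $M(H^\infty)$, which is what you need both to invoke Theorem~\ref{runge} (so that $H^\infty|_K$ is dense in $A(K)$ and $\phi$ is really determined by its values on $H^\infty$) and to run the final separation argument. This claim is false. Take $K=\{z\in\Di:|z|=\tfrac12\}$. Any neighbourhood of $K$ in $M(H^\infty)$ contains an annulus in $\Di$, so $\mathcal O(K)$ contains all Laurent polynomials and hence $A(K)=C(K)$; but the uniform closure of $H^\infty|_K$ in $C(K)$ consists only of boundary values of the disk algebra on $\{|z|\le\tfrac12\}$, a proper subalgebra (it misses $z^{-1}|_K$). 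Correspondingly, the centre $x_0=0\notin K$ cannot be separated from $K$ in your sense: for every $f\in H^\infty$ the maximum principle gives $|\hat f(0)|\le\max_K|\hat f|$. Your Blaschke-set argument cannot repair this, since interpolating Blaschke products (and any products or powers of them) have modulus at most $1$ throughout $\Di$ and can never witness $|\hat f(x_0)|>\max_K|\hat f|$ when $x_0$ and $K$ both lie in $\Di$.

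The paper sidesteps this obstruction entirely: it does not reduce to $H^\infty$, but proves directly a local Bezout lemma (Lemma~\ref{le5.2}): whenever $f_1,\dots,f_n\in\mathcal O(U)$ have no common zero on an open $U$ with $K\subset U\Subset M_a$, one can solve $\sum_j g_jf_j=1$ on $K$ with $g_j\in A(K)$. The $g_j$ are built by the H\"{o}rmander-type $\bar\partial$-scheme (smooth local solutions glued via a partition of unity and corrected by solving $\bar\partial$-equations on $M_a$), which needs no convexity hypothesis on $K$. The standard argument then finishes: if $\phi\in M(A(K))$ were not evaluation at any point of $K$, one would obtain $f_1,\dots,f_n\in\mathcal O(U)$ with no common zero on $K$ but $\phi(f_j|_K)=0$ for all $j$, contradicting $\phi\bigl(\sum_j g_jf_j\bigr)=1$.
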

\begin{proof}
Since $A(K)$ is the closure of $\cup_{U\supset K}\left(\mathcal O(U)|_K\right)$, where $U$ runs over all possible open neighbourhoods of $K$, to prove the result it suffices to establish the following (cf. \cite[Ch. V, Th. 1.8]{Ga}):
\begin{Lm}\label{le5.2}
Suppose that $f_1,\dots, f_n\in \mathcal O(U)$ for an open $U$, $K\subset U\Subset M_a$, satisfy 
\begin{equation}\label{corona}
\sum_{j=1}^n|f_j(z)|>\delta>0\quad\text{for all}\quad z\in U.
\end{equation}
Then there exist $g_1,\dots, g_n\in A(K)$ such that
\begin{equation}\label{bezout}
\sum_{j=1}^n g_jf_j=1\quad\text{on}\quad K.
\end{equation}
\end{Lm}
\begin{proof}
Using \eqref{corona} we find open sets $W_k\Subset U$, $1\le k\le\ell$, and bounded functions $g_{ik}\in\mathcal O(W_k)$ such that
$K\subset \cup_{k=1}^\ell W_k$ and for all $k$
\begin{equation}\label{eq3.5'}
\sum_{i=1}^n g_{ik}f_i=1\quad\text{on}\quad W_k.
\end{equation}
Let $\{\varphi_k\}_{1\le k\le \ell}$ be nonnegative $C^\infty$ functions on $E(S,\beta G)$ such that ${\rm supp}\,\varphi_k\Subset W_k$ and $\sum_{k=1}^\ell\varphi_k=1$ in an open neighbourhood $O\Subset M_a$ of $K$ (see subsection 2.3).
We set 
\[
c_{i,rs}:=g_{ir}-g_{is}\quad\text{on}\quad W_r\cap W_s\ne\emptyset ,
\]
and 
\[
h_{ir}:=\sum_{s} \varphi_s c_{i,rs},
\]
where the sum is taken over all $s$ for which $W_s\cap W_r\ne\emptyset$.
Then $h_{ir}\in C^\infty(W_r)$ and
\[
h_{ir}-h_{is}=c_{i,rs}\quad\text{on}\quad O\cap W_r\cap W_s\ne\emptyset.
\]
Next, we define
\[
h_i:=g_{ir}-h_{ir}\quad\text{on}\quad O\cap W_r .
\]
Clearly, each $h_i\in C^\infty(O)$.  In particular, $\bar\partial h_i$ are $C^\infty$ $(0,1)$ forms on $O$.
Consider equations
\[
\bar\partial b_{is}=h_i\bar\partial h_s\quad\text{on}\quad O.
\]
According to \cite[Lm.~4.2]{Br2}, for an open set $O'$ such that $K\subset O'\Subset O$ there are $b_{is}\in C^\infty(O')$ solving these equations on $O'$.
We set
\[
g_i:=h_{i}+\sum_{s=1}^n (b_{is}-b_{si})f_s\quad\text{on}\quad O'.
\]
Since by \eqref{eq3.5'} $\sum_{i=1}^n h_i f_i=1$ on $O$,
\[
\sum_{i=1}^n g_if_i=1\quad\text{on}\quad O'\quad\text{and}
\]
\[
\bar\partial g_i=\bar\partial h_i+\sum_{s=1}^n f_s\cdot (h_i\bar\partial h_s- h_s\bar\partial h_i)=\bar\partial h_i+h_i\bar\partial\left(\sum_{s=1}^n f_s h_s\right)-\bar\partial h_i\sum_{s=1}^n f_s h_s=0.
\]
Hence, $g_i\in \mathcal O(O')$ (cf. \cite[Ch.~VIII, Th.~2.1]{Ga} for similar arguments).
\end{proof}
The proof of the theorem is complete.
\end{proof}
\begin{R}\label{re1}
{\rm Theorem \ref{ideal} is valid for any compact $K\subset M(H^\infty)$. This fact is not used in the proof.}
\end{R}
\sect{Triviality of Holomorphic Principal Bundles}
\subsect{Bundles with Simply Connected Fibres}
Let $B$ be a complex unital Banach algebra and $\psi:\widetilde B_0^{-1}\to B_0^{-1}$ be the universal covering of the connected component $B_0^{-1}$ of $B^{-1}$ containing $1_{B^{-1}}$. 
Let $p:E\to M(H^\infty)$ be a holomorphic principal bundle with fibre $\widetilde B_0^{-1}$. It is defined on an open cover $\mathcal U=(U_i)_{i\in I}$ of $M(H^\infty)$ by a cocycle $g=\{g_{ij}\in \mathcal O (U_i\cap U_j; \widetilde B_0^{-1})\}$ similarly to the construction of subsection~1.1 (with $X$ replaced by $\widetilde B_0^{-1}$). For a compact set $K\subset M(H^\infty)$, we say that $E|_K$ is holomorphically trivial if it is holomorphically trivial in an open neighbourhood of $K$ (the notion is defined analogously to that of subsection 1.1).

Any open set of the form
$$
O_{f,\varepsilon}:=\{x\in M(H^\infty)\, ;\, |\hat f(x)|<\varepsilon\},
$$
where $f$ is an interpolating Blaschke product ($\hat f$ is the extension of $f$ to $M(H^\infty)$ by means of the Gelfand transform) and
$\epsilon$ is so small that $O_{f,\varepsilon}\Subset M_a$ and is homeomorphic to $\Di_\varepsilon\times \hat f^{-1}(0)$, see subsection~2.2.2, will be called an open {\em Blaschke set}.
\begin{Lm}\label{compactif}
Let $N$ be a compact subset of the Blaschke set $O_{f,\varepsilon}$. Then every continuous map $N\to \widetilde B_0^{-1}$ is homotopic to a constant map.
\end{Lm}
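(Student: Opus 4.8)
The plan is to show that the homotopy set $[N,\widetilde B_0^{-1}]$ is a singleton — the class of constant maps, all of which are mutually homotopic because $\widetilde B_0^{-1}$ is path connected. First I would unwind the structure of $N$. By the description of Blaschke sets in \S2.2.2 there is a homeomorphism $O_{f,\varepsilon}\cong\Di_\varepsilon\times\hat f^{-1}(0)$ in which $\hat f^{-1}(0)$ is a compact totally disconnected space (homeomorphic to a clopen subset of the Stone--\v{C}ech compactification $\beta G$), hence the inverse limit of an inverse system $(F_\alpha)$ of finite discrete spaces. Consequently $O_{f,\varepsilon}=\varprojlim_\alpha(\Di_\varepsilon\times F_\alpha)$, each term being a finite disjoint union of copies of $\Di$, and the compact set $N$ is the inverse limit of its projections $N_\alpha\subset\Di_\varepsilon\times F_\alpha$, each $N_\alpha$ a finite disjoint union of compact planar sets. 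I would also record that $\widetilde B_0^{-1}$ is a Banach Lie group, hence a metrizable Banach manifold, hence an absolute neighbourhood retract (ANR), and that it is simply connected by construction.

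Next I would invoke the standard continuity of the functor $[\,\cdot\,,Y]$ for an ANR $Y$: on compact Hausdorff spaces it carries inverse limits to direct limits (surjectivity: a map off a compactum extends to a neighbourhood; injectivity: a homotopy between two such maps extends likewise). Applying this twice, $[N,\widetilde B_0^{-1}]=\varinjlim_\alpha[N_\alpha,\widetilde B_0^{-1}]$; and, for each fixed $\alpha$, realizing the compact planar set $N_\alpha$ as the intersection of a decreasing sequence of compact polyhedral neighbourhoods $P\subset\Di_\varepsilon\times F_\alpha$ (unions of small grid squares), $[N_\alpha,\widetilde B_0^{-1}]=\varinjlim_P[P,\widetilde B_0^{-1}]$. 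Thus everything reduces to proving that $[P,\widetilde B_0^{-1}]=\ast$ for an arbitrary compact polyhedron $P$ embedded in $\Re^2$.

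This last step is elementary and two-dimensional. Assuming $P$ has a $2$-cell, the union $R$ of its closed $2$-cells is a subset of $\Re^2$, hence not a closed combinatorial surface, so $R$ has an edge lying in exactly one $2$-cell of $P$; collapsing that $2$-cell across this free edge and iterating, $P$ collapses onto a $1$-dimensional subcomplex and is therefore homotopy equivalent to a finite wedge of circles. Since $\pi_1(\widetilde B_0^{-1})=1$, every map of a wedge of circles into $\widetilde B_0^{-1}$ is null-homotopic, so $[P,\widetilde B_0^{-1}]=\ast$. (Equivalently, by obstruction theory: $H^2(P;A)=0$ for every coefficient group $A$ because a planar polyhedron carries no $2$-cycles, and a map $P\to\widetilde B_0^{-1}$ is then homotoped to a constant — the $1$-skeleton is crossed because $\pi_0=\pi_1=1$, and the remaining obstructions, lying in $H^k(P;\pi_k(\widetilde B_0^{-1}))$ for $k\ge2$, vanish since $\dim P\le 2$ and $H^2(P;-)=0$.)

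Chaining the reductions gives $[N,\widetilde B_0^{-1}]=\varinjlim_\alpha\varinjlim_P[P,\widetilde B_0^{-1}]=\ast$, which is the assertion. The point that deserves the most care — and the heart of the matter — is that $N$ may itself be genuinely $2$-dimensional (it can contain a disc $\Di_\varepsilon\times\{n\}$), yet all of its polyhedral approximants live in the plane, so the only a priori nonzero obstruction, the one in degree $2$, is actually zero; the simple connectivity of $\widetilde B_0^{-1}$ is used exactly once, to pass the $1$-skeleton, and the planarity of the disc factor of $O_{f,\varepsilon}$ disposes of everything above it. The accompanying technical point to handle carefully is the inverse-limit continuity of $[\,\cdot\,,\widetilde B_0^{-1}]$, which is where the ANR property of $\widetilde B_0^{-1}$ and the totally disconnected (though non-metrizable) nature of $\hat f^{-1}(0)$ enter.
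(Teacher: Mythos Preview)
Your proposal is correct and follows essentially the same route as the paper's proof: both use the ANR property of $\widetilde B_0^{-1}$ together with the inverse-limit description of $O_{f,\varepsilon}$ (via the totally disconnected fibre $\hat f^{-1}(0)$, which the paper identifies with $\beta\N$) to reduce to maps out of compact planar sets, then pass to a $1$-dimensional retract and invoke simple connectivity. The only cosmetic differences are that the paper extends to smooth-boundary planar neighbourhoods and retracts multiply connected domains onto graphs, whereas you approximate by planar polyhedra and collapse free $2$-cells; the underlying mechanism is identical.
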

\begin{proof}
For the basic facts of algebraic topology see, e.g., \cite{ES}.

Since $f^{-1}(0)$ is an interpolating sequence for $H^\infty$, the set $\hat f^{-1}(0)$ is homeomorphic to the Stone-\v{C}ech compactification $\beta\N$. By the result of \cite{Ma}, $\beta\N$ can be obtained as the (iterated) inverse limit of a family of finite sets. Hence, $O_{f,\varepsilon}$ can be obtained as the (iterated) inverse limit of a family of sets of the form $\Di_{\varepsilon}\times {\mathcal F}$, where ${\mathcal F}$ is a finite set. In turn, $N$ can be obtained as the (iterated) inverse limit of a family of compact subsets $K\subset \Di_{\varepsilon}\times {\mathcal F}$. Since $\widetilde B_0^{-1}$ is an absolute neighbourhood retract, see \cite{P}, any continuous map $N\to \widetilde B_0^{-1}$ is homotopic to a continuous map into $\widetilde B_0^{-1}$ pulled back from some $K\subset \Di_{\varepsilon}\times {\mathcal F}$ in the inverse limit construction, see, e.g., an argument in the proof of Theorem 1.2 of \cite{BRS}. Thus it suffices to prove that every continuous map $K\to \widetilde B_0^{-1}$ with such a $K$ is homotopic to a constant map.
Further, since $\widetilde B_0^{-1}$ is an absolute neighbourhood retract, every continuous map $K\to \widetilde B_0^{-1}$, can be extended to an open neighbourhood $W\subset \Di_{\varepsilon}\times {\mathcal F}$ with a smooth boundary. In particular, $W$ has finitely many connected components homeomorphic to multiply connected domains in $\Co$. This implies that it suffices to establish the result for continuous maps into $\widetilde B_0^{-1}$ defined on a multiply connected domain $D$. Finally, there exists a finite connected one-dimensional $CW$ complex $S\subset D$ such that $S$ is a strong deformation retract of $D$. Thus every continuous map $D\to \widetilde B_0^{-1}$ is homotopic to the pullback under the retraction of a continuous map $S\to \widetilde B_0^{-1}$. Since $\widetilde B_0^{-1}$ is simply connected, every continuous map $S\to \widetilde B_0^{-1}$ is homotopic to a constant map.
\end{proof}

\begin{Proposition}\label{sew}
Suppose $W_i\Subset U_i\subset M(H^\infty)$, $i=1,2$, are open such that $\bar W_1$ is holomorphically convex and $\bar U_1\cap \bar U_2$ is a subset of an open Blaschke set $O_{B,\varepsilon}$. We set $W:=W_1\cup W_2$. If $E|_{\bar U_i}$, $i=1,2$, are holomorphically trivial, then $E|_W$ is holomorphically trivial.
\end{Proposition}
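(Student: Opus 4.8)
The plan is to reduce the gluing problem for the principal bundle $E$ with simply connected fibre $\widetilde B_0^{-1}$ to the Cartan-type splitting of Theorem \ref{cartan}. First I would pick the trivializations: by hypothesis there exist holomorphic trivializations $\tau_i : E|_{V_i}\to V_i\times\widetilde B_0^{-1}$ over open neighbourhoods $V_i\supset\bar U_i$, $i=1,2$. On the overlap $\bar U_1\cap\bar U_2$ the transition map $\tau_1\circ\tau_2^{-1}$ is given by left multiplication by a holomorphic map $F\in\mathcal O(\bar U_1\cap\bar U_2;\widetilde B_0^{-1})\subset A(\bar U_1\cap\bar U_2;\widetilde B_0^{-1})$. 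To trivialize $E|_W$ it suffices to write $F=F_1\cdot F_2$ on $W_1\cap W_2$ with $F_i\in A(W_i;\widetilde B_0^{-1})$ (in fact holomorphic, but the Cartan lemma is stated for $A$), since then $\tau_1' := (F_1^{-1}\cdot)\circ\tau_1$ and $\tau_2' := (F_2\cdot)\circ\tau_2$ agree on $W_1\cap W_2$ and patch to a global trivialization of $E|_W$; holomorphy of the glued section follows because $F_i$ and the $\tau_i$ are holomorphic on $\Di\cap W$.

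The role of the Blaschke-set hypothesis is to guarantee that $F$ lies in the connected component of the identity $I$ in the group $A(\bar U_1\cap\bar U_2;\widetilde B_0^{-1})$, which is exactly what Theorem \ref{cartan} requires (together with $W_1$ holomorphically convex and $W_1\cap W_2\neq\emptyset$, both assumed). Here I would invoke Lemma \ref{compactif}: since $\bar U_1\cap\bar U_2$ is a compact subset of the Blaschke set $O_{B,\varepsilon}$, every continuous map $\bar U_1\cap\bar U_2\to\widetilde B_0^{-1}$ is null-homotopic. A null-homotopy is precisely a path from $F$ to a constant map in $C(\bar U_1\cap\bar U_2;\widetilde B_0^{-1})$; composing with $\psi:\widetilde B_0^{-1}\to B_0^{-1}$ and observing that the homotopy takes values in $\widetilde B_0^{-1}\subset B^{-1}$ with $\psi$ continuous, one checks the induced map $[0,1]\times(\bar U_1\cap\bar U_2)\to\widetilde B_0^{-1}$ is continuous and $\psi\circ\gamma(t)\in A(\bar U_1\cap\bar U_2;B)$ for each $t$ (indeed it is continuous, and continuity in $t$ of the $A$-valued path follows from uniform continuity of the homotopy on the compact $[0,1]\times(\bar U_1\cap\bar U_2)$). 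Multiplying the terminal constant $F(x_0)$ back in (left translation is a homeomorphism of $\widetilde B_0^{-1}$ fixing connectedness) we may take the constant to be $1_G$, so $F$ indeed lies in the connected component of $I$ in $A(\bar U_1\cap\bar U_2;\widetilde B_0^{-1})$.

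Then Theorem \ref{cartan} (applied with $U_i$ there being neighbourhoods of $\bar U_i$, $W_i$ as given, and $G := \widetilde B_0^{-1}$ — note $\widetilde B_0^{-1}$ is exactly a regular covering of $B_0^{-1}$, as required) yields $F_i\in A(W_i;\widetilde B_0^{-1})$ with $F_1\cdot F_2 = F$ on $W_1\cap W_2$. One final point I would address: the factors $F_i$ are a priori only in $A(W_i;\widetilde B_0^{-1})$, whereas holomorphic triviality of $E|_W$ asks for a holomorphic trivialization. This is handled by noting that $F$ is holomorphic and that the whole construction takes place inside $M_a\supset O_{B,\varepsilon}$, where the $\bar\partial$-machinery of Theorem \ref{dbar} applies; indeed, inspecting the proof of Theorem \ref{cartan}, the factors obtained from Proposition \ref{small} and the Runge approximation step (Theorem \ref{runge}) are holomorphic on $\Di\cap W_i$, hence $F_i\in\mathcal O(W_i;\widetilde B_0^{-1})$. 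The main obstacle is this last bookkeeping — tracking holomorphy through the Cartan induction — together with verifying the continuity of the $A$-valued path $\gamma$; the topological input itself is essentially immediate from Lemma \ref{compactif}.
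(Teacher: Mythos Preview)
There is a genuine gap in your argument at the step where you pass from Lemma~\ref{compactif} to the hypothesis of Theorem~\ref{cartan}. Lemma~\ref{compactif} gives you a \emph{continuous} null-homotopy of $F$, i.e.\ a path $\gamma:[0,1]\to C(\bar U_1\cap\bar U_2;\widetilde B_0^{-1})$. But Theorem~\ref{cartan} requires a path in $A(\bar U_1\cap\bar U_2;\widetilde B_0^{-1})$, and by definition $f\in A(K;G)$ means $\psi\circ f|_S\in A(S;B)$ for every compact $S\subset K$, where $A(S;B)$ is the uniform closure of $\mathcal O(S;B)$ in $C(S;B)$. Your parenthetical justification (``indeed it is continuous'') conflates $A(K;B)$ with $C(K;B)$: for $0<t<1$ the intermediate maps $\gamma(t)$ are merely continuous and there is no reason they can be uniformly approximated by holomorphic functions. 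This is not a bookkeeping issue---it is the heart of the proposition.

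The paper closes exactly this gap, and the machinery it invokes is not incidental. One works instead with $\psi\circ c\in A(\bar U_1\cap\bar U_2;B^{-1})$, approximates it by an element $\tilde c$ of the algebraic tensor product $A(\bar U_1\cap\bar U_2)\otimes B$ (using \cite[Th.~1.19]{Br2}), and then applies Davie's theorem \cite{D}: the connected components of the invertibles of the projective tensor product $A(\bar U_1\cap\bar U_2)\,\widehat\otimes\, B$ are in bijection, via the Gelfand transform, with homotopy classes of continuous maps $M(A(\bar U_1\cap\bar U_2))\to B^{-1}$. Theorem~\ref{ideal} identifies this maximal ideal space with $\bar U_1\cap\bar U_2$, and only then does Lemma~\ref{compactif} enter to show the relevant homotopy class is trivial. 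This yields a path to $I$ \emph{inside} $A(\bar U_1\cap\bar U_2;B^{-1})$, which is then lifted to $\widetilde B_0^{-1}$ by the covering homotopy theorem. In short, Davie's result together with Theorem~\ref{ideal} is precisely the bridge from topological null-homotopy to path-connectedness in the holomorphic algebra, and your proposal omits it.
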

\begin{proof}
Without loss of generality we may assume that $\bar W_1\cap\bar W_2$ is nonempty and $\bar U_1\cap\bar U_2$ is proper in each $\bar U_i$, $i=1,2$ (for otherwise the statement is trivial). Then, by the hypothesis, $E|_{\bar U_1\cup\bar U_2}$ is determined by a cocycle $c\in \mathcal O(\bar U_1\cap\bar U_2; \widetilde B_0^{-1})$. 
Let $\mathcal C$ be a connected component of the Banach group $A(\bar U_1\cap\bar U_2; B^{-1})$ containing $\psi\circ c$.
According to \cite[Th.~1.19]{Br2} $A(\bar U_1\cap\bar U_2; B)$ is the completion of the algebraic tensor product $A(\bar U_1\cap\bar U_2)\otimes B$ with respect to norm 
\[
\left\|\sum_{k=1}^m a_k\otimes b_k\right\|:=\sup_{x\in \bar U_1\cap\bar U_2}\left\|\sum_{k=1}^m a_k(x) b_k\right\|_{B}\quad \text{with}\quad a_k\in A(\bar U_1\cap\bar U_2),\, b_k\in B.
\]
Therefore (because $\mathcal C$ is open) there exists $\tilde c\in \mathcal C\cap\bigl(A(\bar U_1\cap\bar U_2)\otimes B\bigr)$ sufficiently close to $\psi\circ c$ such that $t(\psi\circ c)+(1-t)\tilde c\in\mathcal C$ for all $t\in [0,1]$. In turn, $A(\bar U_1\cap\bar U_2)\otimes B$ belongs to the projective tensor product $A(\bar U_1\cap\bar U_2)\widehat{\otimes} B$. By the result of \cite{D} the set of connected components of the group of invertible elements of the Banach algebra $A(\bar U_1\cap\bar U_2)\widehat{\otimes} B$ is in a one-to-one correspondence (defined by the Gelfand transform on $A(\bar U_1\cap\bar U_2)$) with the set of homotopy classes of continuous maps $M(A(\bar U_1\cap\bar U_2))\to B^{-1}$. 
By Theorem \ref{ideal}, $M(A(\bar U_1\cap\bar U_2))=\bar U_1\cap\bar U_2\subset O_{B,\varepsilon}$. Then by Lemma \ref{compactif},
the map $\psi\circ c$ is homotopic inside $B^{-1}$ to $I$  (here $I(z):=1_{B^{-1}}$ for all $z$). In turn, the map $\tilde c: \bar U_1\cap\bar U_2\to B^{-1}$ is homotopic to $I$. Thus $\tilde c\in (A(\bar U_1\cap\bar U_2)\widehat{\otimes} B)^{-1}$ belongs to the connected component containing $I$. Observing that $(A(\bar U_1\cap\bar U_2)\widehat{\otimes} B)^{-1}\subset A(\bar U_1\cap\bar U_2; B^{-1})$ we get that $\psi\circ c$ can be joined by a path $\gamma: [0,1]\to A(\bar U_1\cap\bar U_2; B^{-1})$ with $I$. 
Further, since $\psi: \widetilde B_0^{-1}\to B_0^{-1}$ is a principal bundle with discrete fibre and $\bar U_1\cap\bar U_2$ is compact, by the covering homotopy theorem, see, e.g., \cite{Hu},  there exists a unique continuous map $R:[0,1]\times \bar U_1\cap\bar U_2\to \widetilde B_0^{-1}$ such that
\[
\psi(R(t,x))=\gamma(t)(x),\quad t\in [0,1],\quad\text{and}\quad  R(1,\cdot)=c.
\]
Since $\psi$ is locally biholomorphic, $R(t,\cdot)\in A(\bar U_1\cap\bar U_2;\widetilde B_0^{-1})$ for each $t\in [0,1]$. We define a path $\widetilde\gamma: [0,1]\to A(\bar U_1\cap\bar U_2;\widetilde B_0^{-1})$ by the formula $\widetilde\gamma(t):=R(t,\cdot)$, $t\in [0,1]$. By the definition $\widetilde\gamma(0):\bar U_1\cap\bar U_2\to \widetilde B_0^{-1}$ is a continuous map into the discrete group $\psi^{-1}(1_{B^{-1}})$. Since $\bar U_1\cap\bar U_2$ is compact, the image of $\widetilde\gamma(0)$ consists of finitely many points. In particular, there exists a path $\gamma': [0,1]\to A(\bar U_1\cap\bar U_2;\widetilde B_0^{-1})$ which joins $\widetilde\gamma(0)$ with
$I$ (here $I(z):=1_{\widetilde B_0^{-1}}$ for all $z$). The product of $\widetilde\gamma$ and $\gamma'$ gives a path in $A(\bar U_1\cap\bar U_2;\widetilde B_0^{-1})$ joining $c$ with $I$. Hence, we can apply Theorem \ref{cartan} to $c$.  Then we find $c_i\in A(\bar W_i; G)$, $i=1,2$, such that $c_1^{-1}\cdot c_2= c$ on $\bar W_1\cap \bar W_2$. But this shows that $E|_{W}$ is holomorphically trivial, as required.
\end{proof}
\begin{Th}\label{principal}
Let $p:E\to M(H^\infty)$ be a holomorphic principal bundle with fibre $\widetilde B_0^{-1}$. Then $E$ is holomorphically trivial.
\end{Th}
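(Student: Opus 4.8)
The plan is to build the holomorphic trivialization by a covering/induction argument that exhausts $M(H^\infty)$ by pieces glued via Proposition \ref{sew}. First I would use the structure of $M(H^\infty)$: the trivial parts $M_s$ form a totally disconnected compact set of dimension $0$, and the remaining set $M_a$ is covered by open Blaschke sets $O_{f,\varepsilon}$, each of which is, after a small shrinking, homeomorphic to a product $\Di_\varepsilon\times\hat f^{-1}(0)$ with $\hat f^{-1}(0)\cong\beta\N$. On each such Blaschke set (or rather on a holomorphically convex compact piece of it), the bundle $E$ should be holomorphically trivial: indeed its cocycle takes values in the simply connected group $\widetilde B_0^{-1}$, and by Lemma \ref{compactif} every continuous map from a compact subset of a Blaschke set into $\widetilde B_0^{-1}$ is null-homotopic, so (by the usual translation of homotopy triviality of the cocycle into local holomorphic triviality via Theorems \ref{cousin} and \ref{cartan}, exactly as inside the proof of Proposition \ref{sew}) the bundle is holomorphically trivial there. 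This gives a finite cover $(V_k)$ of $M(H^\infty)$, where each $V_k$ is either contained in a Blaschke set with $E|_{V_k}$ holomorphically trivial, or is a small clopen-in-$M_s$ piece — actually, since $M_s$ is $0$-dimensional and the cover is open, I can arrange the pieces meeting $M_s$ to overlap only inside $M_a$, i.e. arrange all pairwise intersections $\bar V_k\cap\bar V_l$ to lie in $M_a$, in fact in a single Blaschke set, using holomorphic convexity (Proposition \ref{holconv}) to also make the relevant closures holomorphically convex.

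Next I would run an induction on the number of pieces using Proposition \ref{sew} as the gluing step. The point of Proposition \ref{sew} is precisely that if $E$ is holomorphically trivial over $\bar U_1$ and over $\bar U_2$, with $\bar U_1$ holomorphically convex and $\bar U_1\cap\bar U_2$ inside a Blaschke set, then $E$ is holomorphically trivial over $W_1\cup W_2$. So, ordering the finite cover $V_1,\dots,V_m$ appropriately, I set $W^{(1)}:=$ (a shrinking of) $V_1$ and, having trivialized $E$ over a holomorphically convex neighbourhood of $W^{(1)}\cup\cdots\cup W^{(j)}$, I glue in $V_{j+1}$: the key is that the overlap of the already-built piece with $V_{j+1}$ sits inside a Blaschke set, which I can guarantee by choosing the cover so that each new piece meets the union of previous ones only inside $M_a$ and, after refinement, inside one Blaschke set. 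Holomorphic convexity is propagated at each stage by Proposition \ref{holconv} (shrinking slightly to keep closures holomorphically convex). After $m-1$ gluings $E$ is holomorphically trivial over a neighbourhood of $M(H^\infty)$, i.e. $E$ is holomorphically trivial.

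The main obstacle — and the step I would spend the most care on — is the combinatorial/topological bookkeeping needed to present $M(H^\infty)$ as such a nicely nested union: I need a finite open cover whose members are each contained in Blaschke sets (on the $M_a$ side) or are clopen-in-$M_s$ slabs, whose successive partial unions have holomorphically convex closures, and whose successive overlaps lie inside single Blaschke sets. The totally disconnected structure of $M_s$ (dimension $0$) is what makes the last two requirements simultaneously achievable: one separates the finitely many ``$M_s$-pieces'' by clopen partitions so their mutual overlaps are pushed into $M_a$, and then every overlap, being a compact subset of $M_a$, can be covered by (and hence, after one more refinement, placed inside) a finite union of Blaschke sets — and by subdividing further, inside a single Blaschke set. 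A secondary technical point is verifying that holomorphic triviality over a compact holomorphically convex set $\bar U_1$ is stable under the small shrinkings forced by Proposition \ref{holconv}, which is immediate since holomorphic triviality over a compact set is by definition triviality over some open neighbourhood. Once the cover is arranged, the proof is a clean finite induction with Proposition \ref{sew} doing all the analytic work.
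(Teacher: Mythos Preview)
Your overall strategy is the paper's: cover $M(H^\infty)$ by finitely many pieces on which $E$ is trivial, arrange that all overlaps sit inside Blaschke sets, and glue inductively via Proposition~\ref{sew}. But your inductive bookkeeping assigns the roles in Proposition~\ref{sew} the hard way, and this creates a real gap. You want the \emph{accumulated} union $W^{(1)}\cup\dots\cup W^{(j)}$ to serve as the holomorphically convex piece $W_1$, and you say Proposition~\ref{holconv} propagates this after each gluing. It does not: Proposition~\ref{holconv} takes as input a set that is \emph{already} holomorphically convex and produces a holomorphically convex neighbourhood of it inside a prescribed open set. After a gluing step you only know triviality over some open $W$; there is no reason $W$ contains a holomorphically convex compact set large enough to still cover, together with the remaining $V_k$'s, all of $M(H^\infty)$. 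A second, related difficulty: with several ``$M_s$-pieces'' in your cover, when you attach such a piece $V_{j+1}$ to an accumulated union that already contains Blaschke pieces, the intersection $\overline{(\cdots)}\cap\bar V_{j+1}=\bigcup_{i\le j}(\bar V_i\cap\bar V_{j+1})$ is a union of sets lying in \emph{different} Blaschke sets and need not lie in a single one, so the hypothesis of Proposition~\ref{sew} fails.

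The paper's organization removes both obstacles with one move. First, merge all the $M_s$-neighbourhoods into a \emph{single} piece $U_1$ (a disjoint union of small open sets, so $E|_{U_1}$ is trivial) and make it the base case. Then add Blaschke pieces $U_2,\dots,U_N$ one at a time, letting the \emph{new} Blaschke piece play the role of $W_1$ in Proposition~\ref{sew}. Holomorphic convexity of the new piece is easy to arrange (it lies in $M_a$), and since the new $U_1$ is itself inside a Blaschke set, the overlap $\bar U_1\cap\bar U_2$ automatically lies in that Blaschke set. The accumulated union never needs to be holomorphically convex. The inevitable shrinking is absorbed by fixing in advance a nested family $W_{ij}\Subset U_j$ with $\bar W_{ij}$ holomorphically convex for $j\ge 2$, so after all $N$ steps one still covers $M(H^\infty)$. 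With this swap of roles your outline becomes exactly the paper's proof; your argument for local triviality on Blaschke pieces via Lemma~\ref{compactif} is unnecessary there (one simply takes the Blaschke cover fine enough), though that lemma is of course what drives Proposition~\ref{sew} itself.
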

\begin{proof}
Since $M_s$ is totally disconnected, there exists a finite open cover $(V_j)$ of $M_s$ such that $\bar V_{i}\cap \bar V_{j}=\emptyset$ for all $i\ne j$ and $E|_{V_j}$ is trivial for all $j$. We set $U_1:=\cup_j V_j$. Then $E|_{U_1}$ is trivial. Since $M(H^\infty)\setminus U_1$ is a compact subset of $M_a$, we can cover it by open Blaschke sets $U_2,\dots , U_N$ such that $\bar U_j\Subset M_a$ and $E|_{U_j}$ is trivial for all $2\le j\le N$.
Next, we can find a refinement $(W_j)_{j\in J}$ of the cover $(U_i)$ such that $W_1\Subset U_1$ and all $\bar W_j$, $j\ne 1$, are holomorphically convex compact subsets of $M_a$, see subsection~2.2.2.
Choosing an open neighbourhood of each $\bar W_j$ containing in $U_{\tau(j)}$ ($\tau$ is the refinement map), without loss of generality we may assume that the sets of indices of these covers coincide, i.e., $\bar W_j\subset U_j$ for all $1\le j\le N$.

Further, according to Proposition \ref{holconv} we can find a sequence of open sets $W_{ij}$, $i\in\N$, $1\le j\le N$, such that $\bar W_j\subset W_{ij}$ for all $i$,
each $W_{ij}\Subset U_j$ and $\bar W_{ij}$, $2\le j\le N$, are holomorphically convex. We set $\mathcal W_i:=(W_{ij})_{1\le j\le N}$. Then $\mathcal W_i$ are finite open covers of $M(H^\infty)$ such that all nonempty $W_{ij_1}\cap W_{ij_2}$ with $j_1\ne j_2$ are relatively compact in $M_a$ (and belong to open Blaschke sets).  For $1\le k\le N$ we set
\[
Z_{ik}:=\bigcup_{j=1}^k W_{ij}.
\]
Using induction on $k$ we prove that if $E|_{Z_{k-1\, k-1}}$ is holomorphically trivial in the category of $\widetilde B_0^{-1}$-bundles, then $E|_{Z_{k k}}$ is holomorphically trivial in this category as well.

By our definition $E|_{Z_{1 1}}$ is holomorphically trivial in the category of $\widetilde B_0^{-1}$-bundles. Provided that the statement is valid for $k-1$ let us prove it for $k$.

To this end in Proposition \ref{sew} consider $U_1:=W_{k-1\, k}$, $U_2:=Z_{k-1\, k-1}$, $W_1:=W_{k k}$, $W_2:=Z_{k k}$. Then the required statement follows from the proposition.

For $k:=N$ we obtain $Z_{N N}=M(H^\infty)$. Thus $E$ is holomorphically trivial as required. 
\end{proof}
\subsect{Bundles with Connected Fibres}
\begin{Th}\label{principal1}
Let $p:E\to M(H^\infty)$ be a holomorphic principal bundle with fibre $B_0^{-1}$. Then $E$ is holomorphically trivial.
\end{Th}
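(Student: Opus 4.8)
The plan is to deduce Theorem \ref{principal1} from Theorem \ref{principal} (the simply connected case) by passing to the universal covering $\psi:\widetilde B_0^{-1}\to B_0^{-1}$ and analysing the obstruction to lifting the structure cocycle. Since $\psi$ is a regular covering with discrete central kernel $\Gamma:=\psi^{-1}(1_{B^{-1}})$, the bundle $E$ with fibre $B_0^{-1}$ given on a finite cover $\mathcal U=(U_i)$ by a holomorphic cocycle $g=\{g_{ij}\in\mathcal O(U_i\cap U_j;B_0^{-1})\}$ admits, on each pairwise intersection, many local lifts $\tilde g_{ij}$ through $\psi$ (each $\bar U_i\cap\bar U_j$ being a compact subset of $M_a$, hence — when contained in a Blaschke set — with vanishing obstruction by Lemma \ref{compactif}, or in general by refining the cover as in the proof of Theorem \ref{principal}). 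The triple products $\tilde g_{ij}\tilde g_{jk}\tilde g_{ki}$ then take values in $\Gamma$, defining a \v{C}ech $2$-cocycle with values in the (discrete, in general non-abelian but centrally so) group $\Gamma$, i.e.\ a class in $H^2(M(H^\infty);\Gamma)$. Because $\Gamma$ is central in $\widetilde B_0^{-1}$, this cohomology set is genuinely a group, and it is the complete obstruction to lifting $g$ to a holomorphic $\widetilde B_0^{-1}$-valued cocycle.

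The key step is then to show this obstruction class vanishes. Here I would invoke the topological input recorded in subsection 2.2.3: the covering dimension of $M(H^\infty)$ is $2$ and $H^2(M(H^\infty);\Z)=0$. More precisely, $M_s$ is totally disconnected, so over a suitable clopen-type neighbourhood $U_1\supset M_s$ the bundle is already trivial and the obstruction is supported on the complement, a compact subset of $M_a$; over $M_a$, which fibres over a compact Riemann surface with totally disconnected (Stone--\v{C}ech) fibres, the relevant degree-two \v{C}ech cohomology with any coefficient group vanishes — this follows from $H^2(M(H^\infty);\Z)=0$ together with the structure of $\Gamma$ as a quotient of $\Z$-modules (in the cases of interest $\Gamma$ is a subgroup of a finite-dimensional vector space, so finitely generated abelian, and the universal-coefficient/Bockstein machinery reduces everything to $H^2(\,\cdot\,;\Z)=0$ and $H^1$ being free). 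Hence there is a refinement of $\mathcal U$ on which the local lifts can be adjusted by $\Gamma$-valued functions to produce a genuine holomorphic cocycle $\tilde g=\{\tilde g_{ij}\in\mathcal O(U_i\cap U_j;\widetilde B_0^{-1})\}$ with $\psi\circ\tilde g=g$.

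This $\tilde g$ defines a holomorphic principal bundle $\widetilde E$ on $M(H^\infty)$ with fibre $\widetilde B_0^{-1}$, and by construction $\widetilde E$ maps onto $E$ fibrewise via $\psi$; equivalently $E$ is the bundle associated to $\widetilde E$ by the homomorphism $\psi:\widetilde B_0^{-1}\to B_0^{-1}$. By Theorem \ref{principal}, $\widetilde E$ is holomorphically trivial: there are $\tilde h_i\in\mathcal O(U_i;\widetilde B_0^{-1})$ with $\tilde g_{ij}=\tilde h_i\tilde h_j^{-1}$ on $U_i\cap U_j$. Applying $\psi$ gives $g_{ij}=(\psi\circ\tilde h_i)(\psi\circ\tilde h_j)^{-1}$ with $\psi\circ\tilde h_i\in\mathcal O(U_i;B_0^{-1})$, which exhibits $E$ as holomorphically trivial.

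The main obstacle is the second step — showing the $\Gamma$-valued class in $H^2$ vanishes — because one must convert the stated facts ($\dim M(H^\infty)=2$, $H^2(M(H^\infty);\Z)=0$) into vanishing with the possibly non-finitely-generated or non-split coefficient group $\Gamma$, and one must do the bookkeeping carefully over the decomposition $M(H^\infty)=U_1\cup(\text{Blaschke sets in }M_a)$ so that the refinements used are compatible with holomorphy (each intersection staying inside $M_a$, each $\bar W_j$ holomorphically convex, as in the proof of Theorem \ref{principal}). Once the lift exists, the reduction to Theorem \ref{principal} is purely formal.
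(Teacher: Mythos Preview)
Your overall strategy matches the paper's proof: lift the $B_0^{-1}$-cocycle through the universal covering $\psi:\widetilde B_0^{-1}\to B_0^{-1}$ modulo an obstruction in $H^2$ with coefficients in the discrete central kernel, kill the obstruction, then invoke Theorem~\ref{principal} and push down via $\psi$. The gap you yourself flag --- vanishing of the $H^2$ class when the coefficient group $\Gamma=\ker\psi$ is not a priori finitely generated --- is genuine, and your suggested fixes (universal coefficients, Bockstein, ``$\Gamma$ a subgroup of a finite-dimensional vector space'') do not close it: for a general complex unital Banach algebra $B$ the group $\pi_1(B_0^{-1})$ can be an arbitrary abelian group, and universal-coefficient formulas for \v{C}ech cohomology of a space like $M(H^\infty)$ are not available without further work.

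The paper's resolution is a compactness trick you are missing. First refine $\mathcal U$ so that each $g_{ij}(U_i\cap U_j)$ lies in a simply connected open set $W_{ij}\subset B_0^{-1}$ and each $g_{ij}$ extends continuously to $\overline{U_i\cap U_j}$; this is just uniform continuity on compacta --- there is no need to force $\overline{U_i\cap U_j}\subset M_a$ or to invoke Lemma~\ref{compactif} for the lifting step. Choosing local inverses $\psi_{ij}:W_{ij}\to\widetilde B_0^{-1}$ and setting $h_{ij}:=\psi_{ij}\circ g_{ij}$, the $\Gamma$-valued $2$-cocycle $h_{ijk}:=h_{ij}h_{jk}h_{ki}$ extends continuously to the compact set $\overline{U_i\cap U_j\cap U_k}$ and therefore has \emph{finite} image in the discrete group $\Gamma$. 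Since there are only finitely many triples, all these images together generate a finitely generated abelian subgroup $Z'\subset\Gamma$, and the cocycle actually lives in $Z^2(\mathcal U;Z')$. Now $H^2(M(H^\infty);Z')=0$ follows from the structure theorem for $Z'$: the free factors are handled by $H^2(M(H^\infty);\Z)=0$ from \cite{S1}, and for each torsion factor $\Z_{p^m}$ one uses $\dim M(H^\infty)=2$ to pass to a refinement of order $3$, lifts the cocycle termwise to $\Z$ (the $2$-cocycle identity is then automatic since no four sets meet), and applies $H^2(M(H^\infty);\Z)=0$ again. Adjusting the $h_{ij}$ by the resulting $Z'$-valued $1$-cochain yields a genuine holomorphic $\widetilde B_0^{-1}$-cocycle, after which your final paragraph applies verbatim.
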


\begin{proof} Suppose that $E$ is defined on a finite open cover $\mathcal U=(U_i)_{i\in I}$ of $M(H^\infty)$ by a cocycle $g=\{g_{ij}\in \mathcal O (U_i\cap U_j; B_0^{-1})\}$. Passing to a suitable refinement, if necessary, without loss of generality we may assume that for each $i, j\in I$ there exists an open simply connected set $W_{ij}\subset B_0^{-1}$ such that $g_{ij}(U_i\cap U_j)\Subset W_{ij}$ and each $g_{ij}$ admits a continuous extension to $\overline{U_i\cap U_{j}}$. Since $W_{ij}$ is simply connected, an inverse $\psi_{ij}:W_{ij}\to \widetilde B_0^{-1}$ of $\psi:\widetilde B_0^{-1}\to B_0$ is defined. Since $g_{ii}\equiv 1_{B_0^{-1}}$ and $g_{ij}=g_{ji}^{-1}$, we can choose $\psi_{ii}$ so that $\psi_{ii}(1_{B_0^{-1}})=1_{\widetilde B_0^{-1}}$ and
$W_{ji}:=\{z\in B_0^{-1}\, ;\, z^{-1}\in W_{ij}\}$, $\psi_{ji}(z):=(\psi_{ij}(z^{-1}))^{-1}$, $z\in W_{ji}$. 

We set $h_{ij}:=\psi_{ij}\circ g_{ij}$. Then $h_{ii}\equiv 1_{\widetilde B_0^{-1}}$, $h_{ji}=h_{ij}^{-1}$ and for pairwise distinct $i,j,k\in I$
\begin{equation}\label{cocycle}
\psi(h_{ij}\cdot h_{jk}\cdot h_{ki})\equiv 1_{B_0^{-1}}\quad\text{on}\quad U_i\cap U_j\cap U_k\ne\emptyset .
\end{equation}

Let $Z\subset \widetilde B_0^{-1}$ be the kernel of $\psi$. Then $Z$ is a discrete abelian subgroup of 
$\widetilde B_0^{-1}$. Due to \eqref{cocycle} the family $h:=\{h_{ijk}\}$, $h_{ijk}:=h_{ij}\cdot h_{jk}\cdot h_{ki}$ on $U_i\cap U_j\cap U_k\ne\emptyset$, is a (continuous) $2$-cocycle on $\mathcal U$ with values in $Z$. Since by our assumption $h_{ijk}$ admits a continuous extension to the compact set $\overline{U_i\cap U_j\cap U_k}$, the image of each $h_{ijk}$ is a finite subset of $Z$. Thus, because the family $\{h_{ijk}\}$ is finite, images of all functions of this family generate a finitely-generated subgroup $Z'$ of $Z$. Hence, $h$ determines an element of the \v{C}ech cohomology group $H^2(M(H^\infty),Z')$. 
\begin{Lm}\label{vanish}
$H^2(M(H^\infty),Z')=0$.
\end{Lm}
\begin{proof}
Since $Z'$ is a direct product of finitely many copies of groups $\Z$ and $Z_{p^{m}}$, where $p\in\N$ is prime, it suffices to show that
$H^2(M(H^\infty),\Z)=0$ and $H^2(M(H^\infty),\Z_{p^m})=0$. The former group is trivial by \cite[Cor.~3.9]{S1}. Next, let $c=\{c_{ijk}\}\in Z^2({\mathcal V},\Z_{p^m})$ be a continuous cocycle defined on a finite open cover $\mathcal V=(V_i)_{i\in I}$ of $M(H^\infty)$. According to \cite{S1} the covering dimension of $M(H^\infty)$ is $2$, hence, passing to a refinement of $\mathcal V$, if necessary, without loss of generality we may assume that the order of the cover $\mathcal V$ is $3$. Since each $c_{ijk}:V_i\cap V_j\cap V_k\to\Z_{p^m}$ is a continuous locally constant function, there exists a continuous locally constant function $\widetilde c_{ijk}:V_i\cap V_j\cap V_k\to\Z$ such that $q(\widetilde c_{ijk})=c_{ijk}$, where $q:\Z\to Z_{p^m}$ is the quotient homomorphism, and $\widetilde c_{iik}=0$, $\widetilde c_{ijk}=-\widetilde c_{jik}=\widetilde c_{jki}$ for all $i,j,k$.
As the order of $\mathcal V$ is $3$, $\widetilde c=\{\widetilde c_{ijk}\}$ is a continuous integer-valued $2$-cocycle on $\mathcal V$. Due to \cite{S1}, $\widetilde c$ determines zero element of $H^2(M(H^\infty),\Z)\, (=0)$. Thus the restriction of $\widetilde c$ to a finite open refinement of $\mathcal V$ is a coboundary. To avoid abuse of notation we will assume that $\widetilde c$ is yet a coboundary on $\mathcal V$. Thus there exist a continuous cochain $\{\widetilde c_{ij}:V_i\cap V_j\to\Z\}_{i,j\in I}$ such that $\widetilde c_{ij}-\widetilde c_{jk}+\widetilde c_{ki}=\widetilde c_{ijk}$ on
$V_i\cap V_j\cap V_k\ne\emptyset$. This implies that $c_{ij}-c_{jk}+c_{ki}=c_{ijk}$ on
$V_i\cap V_j\cap V_k\ne\emptyset$, where $c_{st}:=q(\widetilde c_{st})$. Thus $c$ is a coboundary on $\mathcal V$.
\end{proof}

According to the lemma the restriction of the cocycle $h$ to a suitable refinement of $\mathcal U$ is a coboundary. As before, without loss of generality, we will assume that $h$ is yet a coboundary on $\mathcal U$. Hence, there exist a continuous cochain $\{\widetilde h_{ij}:U_i\cap U_j\to\ Z'\}_{i,j\in I}$ such that
\[
\widetilde h_{ij}\cdot \widetilde h_{jk}\cdot \widetilde h_{ki}=h_{ijk}\quad\text{on}\quad U_i\cap U_j\cap U_k\ne\emptyset .
\]
We set
\[
\widetilde g_{ij}:=h_{ij}\cdot \widetilde h_{ij}^{-1}\quad\text{on}\quad U_i\cap U_j\ne\emptyset .
\]
Then, as $Z'$ is a central subgroup of $\widetilde B_0^{-1}$, 
\[
\widetilde g_{ij}\cdot\widetilde g_{jk}\cdot\widetilde g_{ki}\equiv 1_{\widetilde B_0^{-1}}\quad\text{on}\quad U_i\cap U_j\cap U_k\ne\emptyset .
\]
In particular, $\widetilde g=\{\widetilde g_{ij}\}$, $\widetilde g_{ij}\in\mathcal O(U_i\cap U_j;\widetilde B_0^{-1})$, is a $1$-cocycle determining a holomorphic principal bundle on $M(H^\infty)$ with fibre $\widetilde B_0^{-1}$. By Theorem \ref{principal} this bundle is holomorphically trivial. Hence, there
exist $\widetilde g_i\in\mathcal O(U_i;\widetilde B_0^{-1})$, $i\in I$, such that
\[
\widetilde g_i^{-1}\cdot\widetilde g_j=\widetilde g_{ij}\quad\text{on}\quad U_i\cap U_j .
\]
Since $\psi(\widetilde g_{ij})=g_{ij}$ for $g_i:=\psi(\widetilde g_i)\in\mathcal O(U_i; B_0^{-1})$, $i\in I$, we obtain
\[
g_i^{-1}\cdot g_j=g_{ij}\quad\text{on}\quad U_i\cap U_j .
\]
This shows that the bundle $E$ is holomorphically trivial.

The proof of Theorem \ref{principal1} is complete.
\end{proof}
\sect{Proofs of Theorem \ref{bundle} and Corollary \ref{cor1.2}}
We will prove a more general result. Let $q: B^{-1}\to B^{-1}/B_{0}^{-1}=: C(B^{-1})$ be the (continuous) quotient homomorphism onto the discrete group of connected components of $B^{-1}$. Let $p:E\to M(H^\infty)$ be a holomorphic principal bundle with fibre $B^{-1}$ defined on a finite open cover $\mathcal U=(U_i)_{i\in I}$ of $M(H^\infty)$ by a cocycle $g=\{g_{ij}\in \mathcal O (U_i\cap U_j; B^{-1})\}$. By $E_{C(B^{-1})}\to M(H^\infty)$ we denote the principal bundle with fibre $C(B^{-1})$ defined on $\mathcal U$ by the locally constant cocycle $q(g)=\{q(g_{ij})\in C(U_i\cap U_j; C(B^{-1}))\}$.
\begin{Th}\label{bundle1}
$E$ is holomorphically trivial if and only if the associated bundle $E_{C(B^{-1})}$ is trivial (in the category of principal bundles with discrete fibres).
\end{Th}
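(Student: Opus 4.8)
The plan is to prove the equivalence by manipulating cocycles, reducing the nontrivial implication to Theorem \ref{principal1}. Write $g=\{g_{ij}\in\mathcal O(U_i\cap U_j;B^{-1})\}$ for the defining cocycle of $E$, and note at the outset that since $C(B^{-1})$ is discrete, every continuous map from an open subset of $M(H^\infty)$ into $C(B^{-1})$ is locally constant; in particular $q\circ g_{ij}$ is locally constant, so $q(g)$ genuinely is a cocycle for a principal bundle with discrete fibre.

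For the easy implication I would assume $E$ holomorphically trivial, so that $g_{ij}=g_i^{-1}g_j$ on $U_i\cap U_j$ with $g_i\in\mathcal O(U_i;B^{-1})$. Applying the homomorphism $q$ yields $q(g_{ij})=q(g_i)^{-1}q(g_j)$ with each $q(g_i):U_i\to C(B^{-1})$ locally constant; hence $q(g)$ is a coboundary and $E_{C(B^{-1})}$ is trivial.

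For the main implication I would start from the hypothesis that $q(g)$ is a coboundary: there are locally constant maps $c_i:U_i\to C(B^{-1})$ with $q(g_{ij})=c_i^{-1}c_j$. Next I would fix (by the axiom of choice, no topology needed) a set-theoretic section $s:C(B^{-1})\to B^{-1}$ of $q$ with $s(1_{C(B^{-1})})=1_{B^{-1}}$ and put $\widetilde c_i:=s\circ c_i:U_i\to B^{-1}$. Each $\widetilde c_i$ is locally constant, hence holomorphic, i.e.\ $\widetilde c_i\in\mathcal O(U_i;B^{-1})$, and $q\circ\widetilde c_i=c_i$. Then I would twist the cocycle: set $g'_{ij}:=\widetilde c_i\cdot g_{ij}\cdot\widetilde c_j^{-1}$ on $U_i\cap U_j$. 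A direct computation shows $g'=\{g'_{ij}\}$ is a holomorphic cocycle on $\mathcal U$ cohomologous to $g$ via the holomorphic cochain $\{\widetilde c_i\}$, so $g'$ defines a holomorphic principal bundle holomorphically isomorphic to $E$. Moreover
\[
q(g'_{ij})=q(\widetilde c_i)\,q(g_{ij})\,q(\widetilde c_j)^{-1}=c_i\,(c_i^{-1}c_j)\,c_j^{-1}=1_{C(B^{-1})},
\]
so $g'_{ij}$ takes values in $\ker q=B_0^{-1}$. Thus $g'$ is a holomorphic $B_0^{-1}$-valued cocycle and defines a holomorphic principal $B_0^{-1}$-bundle $E'$ with $E'\times_{B_0^{-1}}B^{-1}\cong E$. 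By Theorem \ref{principal1}, $E'$ is holomorphically trivial, so $g'_{ij}=(g'_i)^{-1}g'_j$ for suitable $g'_i\in\mathcal O(U_i;B_0^{-1})\subset\mathcal O(U_i;B^{-1})$; hence $g'$, and therefore the cohomologous cocycle $g$, is a holomorphic coboundary in $B^{-1}$, i.e.\ $E$ is holomorphically trivial.

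I expect no serious obstacle here: the entire analytic content is already packaged in Theorem \ref{principal1} (and hence in the Cartan- and Cousin-type Lemmas and the $\bar\partial$-theory of Section 2), and the present argument is purely formal. The only points that need a little care are that the lift $\{\widetilde c_i\}$ is holomorphic — automatic, since locally constant maps are holomorphic — and that twisting by it produces an isomorphic bundle whose cocycle has image in the connected component $B_0^{-1}$, which is precisely the hypothesis of Theorem \ref{principal1}.
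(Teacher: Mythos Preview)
Your argument is correct and follows the same route as the paper: trivialize $E_{C(B^{-1})}$, lift the resolving cochain to $B^{-1}$, twist the cocycle into $B_0^{-1}$, and invoke Theorem~\ref{principal1}. The one cosmetic difference is that the paper first passes to a refinement so that each $h_i$ extends continuously to the compact set $\bar U_i$ and therefore has \emph{finite} image, and only then constructs the lift $\widetilde h_i$; your use of a global set-theoretic section $s:C(B^{-1})\to B^{-1}$ makes that refinement unnecessary, since $s\circ c_i$ is automatically locally constant whenever $c_i$ is. This is a harmless simplification rather than a different idea.
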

\begin{proof}
If $E$ is holomorphically trivial, then there exist $g_i\in\mathcal O(U_i;B^{-1})$, $i\in I$, such that
\[
g_i^{-1}\cdot g_j=g_{ij}\quad\text{on}\quad U_i\cap U_j .
\]
This implies that 
\[
q(g_i)^{-1}\cdot q(g_j)=q(g_{ij})\quad\text{on}\quad U_i\cap U_j ,
\]
i.e., $E_{C(B^{-1})}$ is isomorphic to $M(H^\infty)\times C(B^{-1})$ in the category of principal bundles with discrete fibres.

Conversely, suppose $E_{C(B^{-1})}$ is isomorphic to $M(H^\infty)\times C(B^{-1})$ in the category of principal bundles with discrete fibres. Then there exist
$h_i\in C(U_i; C(B^{-1}))$, $i\in I$, such that
\begin{equation}\label{eq7.1}
h_i^{-1}\cdot h_j=q(g_{ij})\quad\text{on}\quad U_i\cap U_j.
\end{equation}
Taking a refinement of $\mathcal U$, if necessary, without loss of generality we may assume that each $h_i$ admits a continuous extension to the compact set $\bar U_i$, $i\in I$. Since $h_i:\bar U_i\to C(B^{-1})$ is continuous and $C(B^{-1})$ is discrete, the image of $h_i$ is finite. In particular, there exists a continuous locally constant function $\widetilde h_i :U_i\to B^{-1}$ such that $q\circ \widetilde h_i=h_i$. By definition each $\widetilde h_i\in\mathcal O(U_i;B^{-1})$. Let us define cocycle $\widetilde g=\{\widetilde g_{ij}\in  \mathcal O (U_i\cap U_j; B^{-1})\}$ by the formulas
\begin{equation}\label{eq7.2}
\widetilde g_{ij}:=\widetilde h_i\cdot g_{ij}\cdot\widetilde h_j^{-1}\quad\text{on}\quad U_i\cap U_j.
\end{equation}
Then $\widetilde g$ determines a holomorphic principal bundle $\widetilde E$ on $M(H^\infty)$ with fibre $B^{-1}$ isomorphic to the bundle $E$. Also, from \eqref{eq7.1} we obtain for all $i,j\in I$
\[
q(\widetilde g_{ij})=1_{C(B^{-1})}.
\]
Thus each $\widetilde g_{ij}$ maps $U_i\cap U_j$ into $B_0^{-1}$. In particular, $\widetilde g$ determines also a subbundle of $\widetilde E$ with fibre $B_0^{-1}$. According to Theorem \ref{principal1} this subbundle is holomorphically trivial. Thus there exist $\widetilde g_i\in\mathcal O(U_i;B_0^{-1})$, $i\in I$, such that
\[
\widetilde g_i^{-1}\cdot \widetilde g_j=\widetilde g_{ij}\quad\text{on}\quad U_i\cap U_j .
\]
From here and \eqref{eq7.2} we obtain for all $i,j\in I$
\[
(\widetilde g_i\cdot h_i)^{-1}\cdot (\widetilde g_j\cdot h_j)=g_{ij}\quad\text{on}\quad U_i\cap U_j .
\]
This shows that $E$ is holomorphically trivial.
\end{proof}
\begin{proof}[Proof of Theorem \ref{bundle}]
Since a holomorphic Banach vector bundle on $M(H^\infty)$ with fibre $X$ is associated with a holomorphic principal bundle on $M(H^\infty)$ with fibre $GL(X)$, the required result follows from Theorem \ref{bundle1} with $B:=L(X)$.
\end{proof}
\begin{proof}[Proof of Corollary \ref{cor1.2}]
(1) The result follows directly from Theorem \ref{principal1}.

(2) Let $E$ is defined on a finite open cover $\mathcal U=(U_i)_{i\in I}$ of $M(H^\infty)$ by a cocycle $g=\{g_{ij}\in \mathcal O (U_i\cap U_j; GL(X))\}$. Since $E$ is topologically trivial, there exist $g_i\in C(U_i;GL(X))$, $i\in I$, such that
\[
g_i^{-1}\cdot g_j=g_{ij}\quad\text{on}\quad U_i\cap U_j .
\]
This implies that 
\[
q(g_i)^{-1}\cdot q(g_j)=q(g_{ij})\quad\text{on}\quad U_i\cap U_j ,
\]
i.e., $E_{C(GL(X))}$ is isomorphic to $M(H^\infty)\times C(GL(X))$ in the category of principal bundles with discrete fibres. Thus the required result follows from Theorem \ref{bundle}.
\end{proof}

\sect{Proof of Theorem \ref{complem}}
\begin{proof}
Since $F\in H_{\rm comp}^\infty(L(X_1,X_2))$, by \cite[Prop. 1.3]{Br2} it is extended to a holomorphic function $M(H^\infty)\to L(X_1,X_2)$ (denoted by the same symbol). We set $M:=\max_{z\in M(H^\infty)}\|F(z)\|$.

\begin{Lm}\label{local1}
There exist a finite open cover $(U_i)_{1\le i\le k}$ of $M(H^\infty)$ and operators $G_i\in\mathcal O(U_i; L(X_2, X_1))$ such that
$G_i(z)F(z)=I_{X_1}$ for all $z\in U_i$.
\end{Lm}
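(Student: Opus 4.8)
The plan is to manufacture the local left inverses by a Neumann--series perturbation of the given family $\{G_z\}_{z\in\Di}$, exploiting only the norm--continuity of the extended map $F:M(H^\infty)\to L(X_1,X_2)$ (which holds by \cite[Prop.~1.3]{Br2}, as used at the start of the proof of Theorem~\ref{complem}), the density of $\Di$ in $M(H^\infty)$ given by the Carleson corona theorem, and the compactness of $M(H^\infty)$. Put $C:=\sup_{z\in\Di}\|G_z\|<\infty$. The first observation is that for every $w\in M(H^\infty)$ and every $\eta>0$ there is a point $z_0\in\Di$ with $\|F(z_0)-F(w)\|<\eta$; this is immediate from continuity of $F$ and density of $\Di$.

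Fix $w\in M(H^\infty)$ and choose $z_0\in\Di$ with $\|F(z_0)-F(w)\|<\tfrac{1}{2C}$. Since $G_{z_0}F(z_0)=I_{X_1}$ we get $\|I_{X_1}-G_{z_0}F(w)\|=\|G_{z_0}\bigl(F(z_0)-F(w)\bigr)\|\le C\|F(z_0)-F(w)\|<\tfrac12$, so $G_{z_0}F(w)\in GL(X_1)$ by the Neumann series. As $z\mapsto\|I_{X_1}-G_{z_0}F(z)\|$ is continuous on $M(H^\infty)$, the set $U_w:=\{z\in M(H^\infty)\,;\,\|I_{X_1}-G_{z_0}F(z)\|<1\}$ is an open neighbourhood of $w$, and $G_{z_0}F(z)\in GL(X_1)$ for every $z\in U_w$. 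Define
\[
G_w(z):=\bigl(G_{z_0}F(z)\bigr)^{-1}G_{z_0},\qquad z\in U_w.
\]
Then $G_w(z)F(z)=\bigl(G_{z_0}F(z)\bigr)^{-1}\bigl(G_{z_0}F(z)\bigr)=I_{X_1}$ on $U_w$. Moreover $z\mapsto G_{z_0}F(z)$ is the composition of the (entire) linear map $T\mapsto G_{z_0}T$, $L(X_1,X_2)\to L(X_1)$, with $F$, and inversion is holomorphic on $GL(X_1)$; hence $G_w|_{U_w\cap\Di}$ is a holomorphic $L(X_2,X_1)$-valued function, and since $G_w$ is continuous on $U_w$ we conclude $G_w\in\mathcal O(U_w;L(X_2,X_1))$.

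Finally, $\{U_w\}_{w\in M(H^\infty)}$ is an open cover of the compact space $M(H^\infty)$; choosing a finite subcover $U_{w_1},\dots,U_{w_k}$ and setting $U_i:=U_{w_i}$, $G_i:=G_{w_i}$ proves the lemma. I do not expect a genuine obstacle here: the one delicate point is that the perturbation argument must be run on $M(H^\infty)$ rather than on $\Di$, which is precisely what the norm--continuity of the extended $F$ together with the density of $\Di$ provides, while the relative compactness of the image of $F$ enters only through that extension statement. (One could equivalently phrase the covering step via compactness of $\overline{F(\Di)}\subset L(X_1,X_2)$: cover it by balls on each of which a single $G_{z_0}$ makes the above construction work, and pull back by $F$.)
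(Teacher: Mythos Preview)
Your proof is correct and follows essentially the same approach as the paper: both use a Neumann--series perturbation based on a fixed left inverse $G_{z_0}$ with $z_0\in\Di$, yielding $G_i(z)=(G_{z_0}F(z))^{-1}G_{z_0}$. The only cosmetic difference is that the paper first invokes uniform continuity of $F$ on the compact space $M(H^\infty)$ to obtain a finite cover $(U_i)$ with small oscillation of $F$ and then picks base points $x_i\in U_i\cap\Di$, whereas you construct the neighbourhoods $U_w$ pointwise and extract a finite subcover by compactness; the resulting local left inverses are identical.
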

\begin{proof}
Let
$C:=\sup_{z\in\Di}\|G_z\|$. Since $F$ is uniformly continuous on $M(H^\infty)$, for every $\varepsilon>0$ there exists a finite open cover $(U_i)_{1\le i\le k}$ of $M(H^\infty)$ such that 
\[
\max_{1\le i\le k}\sup_{x,y\in U_i}\|F(x)-F(y)\|<\varepsilon .
\]
Fix $x_i\in U_i\cap\Di$, $1\le i\le k$, and
define $A_i(z):=G_{x_i} F(z):X_1\to X_1$, $z\in U_i$. We have $A_i(x_i)=I_{X_1}$ and $\|A_i(z)-A_i(x_i)\|< C\varepsilon$. Choose $\varepsilon$ so small that $C\varepsilon<\frac 12$. Then $A_i^{-1}:=\sum_{j=0}^\infty (I_{X_1}-A_i)^j\in \mathcal O(U_i, L(X_1))$ is well defined, and $\|A_i^{-1}\|<2$. We set
\[
G_i(z):=A_i(z)^{-1}G_{x_i}.
\]
Clearly, $G_i(z)F(z)=A_i(z)^{-1}G_{x_i} F(z)=I_{X_1}$ and $\|G_i(z)\|< 2C$ for all $z\in U_i$.
\end{proof}
Choosing $\varepsilon<\frac{1}{8C(1+2MC)}$ sufficiently small we may obtain for all $z_1, z_2\in U_i$, $1\le i\le k$,
\begin{equation}\label{small1}
\begin{array}{l}
\|G_i(z_1)-G_i(z_2)\|\le C\|A_i(z_1)^{-1}-A_i(z_2)^{-1}\|\\
\\
\displaystyle
\le C\|A_i(z_1)-A_i(z_2)\|\cdot\left(1+\sum_{j=2}^\infty j\cdot (C\varepsilon)^{j-1}\right)<\frac{1}{4M(1+2MC)}.
\end{array}
\end{equation}

Let $Y_{iz}:={\rm Ker}(G_i(z))$, $P_{i}(z):=I_{X_2}-F(z)G_i(z)$, $z\in U_i$. Then $P_i(z)$ is a projector onto $Y_{iz}$.
For $z,w\in U_i$ consider the diagram
\[
Y_{iz}\stackrel{P_{i}(w)}{\longrightarrow}Y_{i w}\stackrel{P_i (z)}{\longrightarrow} Y_{iz}.
\]
If $H_{i}(z,w):=P_{i}(z)P_i(w)$, then $H_i(z,z)=I_{Y_{iz}}$ and by \eqref{small1}
$$
\begin{array}{l}
\|H_i(z,w)-H_i(z,z)\|\le\|P_i(z)\|\cdot\|F(w)G_i(w)-F(z)G_i(z)\|\\
\\
\displaystyle
\le  (1+2MC)\cdot \left(2C\varepsilon+\frac{M}{4M(1+2MC)}\right)<\frac{1}{2}.
\end{array}
$$
Thus $H_i(z,w)^{-1}$ exists for all $1\le i\le k$ and $z, w\in U_i$.
This implies that $P_i(z)$ is surjective and $P_i(w)$ is injective for all $z,w\in U_i$, i.e., all $P_i(z)$ are isomorphisms. 
Choosing as one of $x_i$ point $0$, we, hence, obtain (since
$M(H^\infty)$ is connected) that all $Y_{iz}$ are isomorphic to $Y:={\rm Ker}(G_0)$.
By $S_i: Y\to Y_{i x_i}$, $1\le i\le k$, we denote the corresponding isomorphisms.

Next, we define $B_i\in\mathcal O(U_i; L(X_1\oplus Y,X_2))$ as
\[
B_i(z)(v_1,v_2):=F(z)(v_1)+P_i(z)S_i(v_2),\quad z\in  U_{i}.
\]
Clearly, all $B_i(z)$ are isomorphisms.
We set for all $z\in U_i\cap U_j\ne\emptyset$
\[
B_{ij}(z):=B_i(z)^{-1}B_j(z): X_1\oplus Y\to X_1\oplus Y.
\]
Since $B_{ij}(z)(v)=v$ for all $v\in X_1$, 
\begin{equation}\label{matrix}
B_{ij}=\left(
\begin{array}{cc}
I_{X_1}&E_{ij}\\
0&D_{ij}
\end{array}
\right)
\end{equation}
for some $D_{ij}\in\mathcal O(U_i\cap U_j; GL(Y))$, $E_{ij}\in\mathcal O(U_i\cap U_j; L(Y,X_1))$. 

The holomorphic $1$-cocycle $\{D_{ij}\}$ determines a holomorphic Banach vector bundle on $M(H^\infty)$ with fibre $Y$. Since $Y$ satisfies conditions of Theorem \ref{bundle}, this bundle is holomorphically trivial. In particular, there exist $\widetilde D_i\in\mathcal O(U_i; GL(Y))$ such that
$\widetilde D_{i}\widetilde D_j^{-1}=D_{ij}$ on $U_i\cap U_j\ne\emptyset$. We set  $D_i:={\rm diag}\bigl(I_{X_1}, \widetilde D_i\bigr)$ on $U_i$ and define
\[
C_{ij}(z):=D_i(z)^{-1}B_{ij}(z)D_j(z),\quad z\in U_i\cap U_j\ne\emptyset .
\]
Then
$$
C_{ij}=\left(
\begin{array}{cc}
I_{X_1}&F_{ij}\\
0&I_Y
\end{array}
\right)
$$
for some $F_{ij}\in\mathcal O(U_i\cap U_j; L(Y,X_1))$. 

Clearly, $\{F_{ij}\}$ is an additive holomorphic $1$ cocycle on cover $(U_i)$ with values in $L(Y,X_1)$. According to \cite[Th.~1.4]{Br2}, $\{F_{ij}\}$ represents zero element in the corresponding \v{C}ech cohomology group.
This implies that there exist $\widetilde F_i\in \mathcal O(U_i; L(Y, X_1))$ such that $\widetilde F_i-\widetilde F_j=F_{ij}$ on $U_i\cap U_j\ne\emptyset$.

We set
$$
F_{i}=\left(
\begin{array}{cc}
I_{X_1}&\widetilde F_{i}\\
0&I_Y
\end{array}
\right)
$$
and define
\[
H(z):=B_i(z)D_i(z)F_i(z),\quad z\in U_i.
\]
Then $H\in H_{\rm comp}^\infty(L(H_1\oplus Y, H_2))$ has inverse $G\in H_{\rm comp}^\infty(L(H_2, H_1\oplus Y))$ and $H(z)|_{X_1}=F(z)$ for all $z$.
\end{proof}

\sect{Proof of Theorem \ref{teo1.5}}
\begin{proof}
We retain notation of the proof of Theorem \ref{complem}. According to \eqref{matrix} cocycle $\{B_{ij}\}$ determines the trivial holomorphic Banach vector bundle $E$ on $M(H^\infty)$ with fibre $X_1\oplus Y$ which has the trivial holomorphic Banach vector subbundle $E_1$ with fibre $X_1$ (determined by trivial cocycle $\{I_{X_1}\}$) and the holomorphic quotient Banach vector bundle $E_2$ with fibre $Y$ (determined by cocycle $\{D_{ij}\}$). Thus we have an exact sequence of bundles
\begin{equation}\label{eq9.1}
0\longrightarrow E_1\longrightarrow E\longrightarrow E_2\longrightarrow 0
\end{equation}
which induces an exact sequence of Banach holomorphic bundles
\[
0\longrightarrow Hom(E_2, E_1)\longrightarrow Hom(E_2, E)\longrightarrow Hom(E_2, E_2)\longrightarrow 0.
\]
In turn, the latter produces the long cohomology sequences of sheaves of germs of holomorphic sections of these bundles:
$$
\begin{array}{lr}
\displaystyle
0\longrightarrow H^0(M(H^\infty), Hom(E_2, E_1))\longrightarrow H^0(M(H^\infty),Hom(E_2, E))\\
\\
\displaystyle
\longrightarrow H^0(M(H^\infty), Hom(E_2, E_2))\stackrel{\delta}{\longrightarrow} H^1(M(H^\infty), Hom(E_2, E_1))\longrightarrow\cdots .
\end{array}
$$
The identity map $E_2\to E_2$ determines an element $I\in  H^0(M(H^\infty), Hom(E_2, E_2))$. Then sequence \eqref{eq9.1} splits if and only if
$\delta(I)=0\in H^1(M(H^\infty), Hom(E_2, E_1))$, for the basic results related to extensions of bundles, see, e.g., \cite{A}. In our notation
$\delta(I)$ is represented by the holomorphic $1$-cocycle $\{E_{ij}\}$, see \eqref{matrix}. Since $M(H^\infty)$ is paracompact, $\delta(I)=0$ in the corresponding cohomology group of sheaves of germs of {\em continuous} sections of $Hom(E_2,E_1)$. Therefore $E$ is isomorphic in the category of continuous bundles on $M(H^\infty)$ to the bundle $E_1\oplus E_2$ (the Whitney sum). Further, since $E$ is holomorphically trivial, $E_1\oplus E_2$ is the topologically trivial bundle. Thus according to Corollary \ref{cor1.2}\,(2), the holomorphic Banach vector bundle $E_1\oplus E_2$ is holomorphically trivial. By definition we have 
\[
Hom(E_1\oplus E_2,E_1)=Hom(E_1,E_1)\oplus Hom(E_2,E_1).
\]
Thus there exist homomorphisms 
\[
i: H^1(M(H^\infty), Hom(E_2, E_1))\to H^1(M(H^\infty), Hom(E_1\oplus E_2, E_1))
\] 
and
\[
j:H^1(M(H^\infty), Hom(E_1\oplus E_2, E_1))\to H^1(M(H^\infty), Hom(E_2, E_1))
\]
such that $j\circ i={\rm id}$. Consider $i(\delta(I))\in H^1(M(H^\infty), Hom(E_1\oplus E_2, E_1))$. Since $E_1\oplus E_2$ and $E_1$ are holomorphically trivial, the holomorphic Banach vector bundle $Hom(E_1\oplus E_2, E_1))$ is holomorphically trivial as well, i.e., it is holomorphically isomorphic to the bundle $M(H^\infty)\times L(X_1\oplus Y, X_1)$. Then $i(\delta(I))$ is naturally identified with an element of the cohomology group $H^1(M(H^\infty);L(X_1\oplus Y, X_1))$ of the sheaf of germs of holomorphic functions on $M(H^\infty)$ with values in the Banach space $L(X_1\oplus Y, X_1)$. According to \cite[Th. 1.4]{Br2}, $H^1(M(H^\infty);L(X_1\oplus Y, X_1))=0$. Hence,
$\delta(I)=j(i(\delta(I)))=0\in H^1(M(H^\infty), Hom(E_1\oplus E_2, E_1))$.

Thus we have proved that sequence \eqref{eq9.1} splits. This is equivalent to the fact that there exist $\widetilde E_i\in\mathcal O(U_i, L(Y,X_1))$, $i\in I$, such that
\[
\left(
\begin{array}{cc}
I_{X_1}&0\\
0&D_{ij}
\end{array}
\right)=
\left(
\begin{array}{cc}
I_{X_1}&\widetilde E_{i}\\
0&I_{Y}
\end{array}
\right)^{-1}\cdot
\left(
\begin{array}{cc}
I_{X_1}&E_{ij}\\
0&D_{ij}
\end{array}
\right)\cdot
\left(
\begin{array}{cc}
I_{X_1}&\widetilde E_{j}\\
0&I_{Y}
\end{array}
\right)
\quad\text{on}\quad U_i\cap U_j.
\]
We set
\[
E_i:=\left(
\begin{array}{cc}
I_{X_1}&\widetilde E_{i}\\
0&I_{Y}
\end{array}
\right)\in\mathcal O(U_i;GL(X_1\oplus Y)).
\]
Then 
\[
(B_i\cdot E_i)^{-1} (B_j\cdot E_j)={\rm diag}\, (I_{X_1}, D_{ij})\quad\text{on}\quad U_i\cap U_j,
\] 
and $B_i\cdot E_i\in \mathcal O(U_i; L(X_1\oplus Y, X_2))$ are invertible and such that $(B_i(z)\cdot E_i(z))|_{X_1}=F(z)$, $z\in U_i$, $i\in I$.
Let $P_{X_1}:={\rm diag}\,(I_{X_1},0): X_1\oplus Y\to X_1$ be the natural projection. We set
\[
G:=P_{X_1}\cdot (B_i\cdot E_i)^{-1}\quad\text{on}\quad U_i.
\]
Then on $U_i\cap U_j\ne\emptyset$ we have
$$
\begin{array}{l}
\displaystyle
P_{X_1}\cdot ((B_i\cdot E_i)^{-1})-P_{X_1}\cdot((B_j\cdot E_j)^{-1})\\
\\
\displaystyle
=P_{X_1}\cdot\bigl(\bigl((B_i\cdot E_i)^{-1} (B_j\cdot E_j)- I_{X_1\oplus Y}\bigr)\cdot (B_j\cdot E_j)^{-1}\bigr)
\\
\\
\displaystyle
=
{\rm diag}\,(I_{X_1},0)\cdot{\rm diag}\, (0, D_{ij}-I_Y)\cdot (B_j\cdot E_j)^{-1}=0.
\end{array}
$$
Thus $G\in\mathcal O(M(H^\infty); L(X_2, X_1))$. Moreover, by our construction $G(z)F(z)=I_{X_1}$ for all $z\in M(H^\infty)$.

This completes the proof of the theorem.
\end{proof}


\begin{thebibliography}{}

\bibitem[A]{A}
M. F. Atiyah, Complex analytic connections in fibre bundles. Trans. AMS {\bf 85} (1957), 181--207.

\bibitem[BRS]{BRS}
A.~Brudnyi, L.~Rodman and I.~M.~Spitkovsky, 
Projective free algebras of continuous functions on compact abelian groups. J. Funct. Anal. {\bf 259} (2010), 918--932.

\bibitem[Br1]{Br1}
A.~Brudnyi, Topology of the maximal ideal space of $H^\infty$. J. Funct. Anal. {\bf 189} (2002), 21--52. 




\bibitem[Br2]{Br2}
A.~Brudnyi, Banach-valued holomorphic functions on the maximal ideal space of $H^\infty$. Preprint 2011, 31 pp.

\bibitem[C]{C}
L.~Carleson, Interpolations by bounded analytic functions and the corona theorem. Ann. of Math. {\bf 76}
(1962), 547--559.


\bibitem[D]{D}
A.~M.~Davie, Homotopy in Frechet algebras. Proc. London Math. Soc. {\bf 23} (1971), 31--52.

\bibitem[Do]{Do}
A. Douady, Une espace de Banach dont le groupe lin\'eaire n'est pas connexe, Indag.
Math. {\bf 68} (1965), 787--789.

\bibitem[ES]{ES}
S. Eilenberg and N. Steenrod, Foundations of algebraic topology.
Princeton, New Jersey, 1952.




\bibitem[Ga]{Ga}
J.~B.~Garnett, Bounded analytic functions. Academic Press, New York, 1981.


\bibitem[GR]{GR}
H.~Grauert and R.~Remmert, Theory of Stein spaces. Springer-Verlag Berlin Heidelberg, 2004.


\bibitem[H]{H}
K.~Hoffman, Bounded analytic functions and Gleason parts. Ann. of Math. {\bf 86} (1967), 74--111.

\bibitem[Hi]{Hi}
F.~Hirzebruch, Topological methods in Algebraic Geometry. Springer-Verlag,
New York, 1966.

\bibitem[Hu]{Hu}
S.-T. Hu, Homotopy theory. Academic Press, New York, 1959.

\bibitem[Hus]{Hus}
D.~Husemoller, Fibre bundles. Third edition.  Springer-Verlag,
New York, 1994.


\bibitem[Ma]{Ma}
S.~Marde\v{s}i\'c, On covering dimension and inverse limits of compact spaces. Illinois J. Math. {\bf 4} (1960), 278-291.

\bibitem[Mai]{Mai}
B. Maissen, Lie-Gruppen mit Banachr\"{a}umen als Parameterr\"{a}ume. Acta Math. {\bf 108} (1962), 229-269.

\bibitem[M]{M}
B. S. Mityagin, 
The homotopy structure of the linear group of a Banach space. Russ. Math. Surv. {\bf 25} (5) (1970), 59--103.

\bibitem[P]{P}
R. S. Palais, Homotopy theory of infinite dimensional manifolds, Topology {\bf 5}
(1966), 1--16.

\bibitem[Po]{Po}
L.~S. Pontryagin, Topological Groups. Third edition.  Gordon and Breach Science Publishers, New York, 1986.

\bibitem[SN]{SN}
B.~Sz.-Nagy, A problem on operator valued bounded analytic functions. Zap. Nauchn. Sem. LOMI {\bf 81} (1978), 99.

\bibitem[S1]{S1}
D.~Su\'{a}rez, \v{C}ech cohomology and covering dimension for the $H^\infty$ maximal ideal space. J. Funct. Anal. {\bf 123} (1994),
233--263.

\bibitem[S2]{S2}
D.~Su\'{a}rez, Trivial Gleason parts and the topological stable rank of $H^\infty$. Amer. J. Math. {\bf 118} (1996), 879--904.

\bibitem[T1]{T1}
S. Treil, Geometric methods in spectral theory of vector-valued functions: Some
recent results, in Toeplitz Operators and Spectral Function Theory, Oper.
Theory Adv. Appl., vol. 42, Birkh\"{a}user Verlag, Basel 1989, pp. 209--280.

\bibitem[T2]{T2}
S. Treil, Lower bounds in the matrix corona theorem and the codimension one conjecture. GAFA {\bf 14} 5 (2004), 1118--1133. 

\bibitem[TW]{TW}
S. Treil and B. Wick,
Analytic projections, Corona Problem and geometry of holomorphic vector bundles,  J. Amer. Math. Soc. {\bf 22} (2009), no. 1, 55--76.

\bibitem[V]{V}
P.~Vitse, A tensor product approach to the operator corona theorem. J. Operator Theory {\bf 50} (2003), 179--208.




\end{thebibliography}
\end{document}